\newcommand{\nn}{\nonumber}
\def\refe#1{(\ref{#1})}
\def\R{\mathbb{R}}
\def\C{\mathbb{C}}
\def\d{\,{\rm d}}
\title{Mathematical and numerical analysis
of time-dependent 
Ginzburg--Landau equations 
in nonconvex polygons 
based on Hodge decomposition
\thanks{This work is supported in part by the National Natural Science Foundation of China
(NSFC) under grants No. 11301262, No. 11471031, No. 91430216,
and the US National Science Foundation (NSF) through grants DMS-1115530 and DMS-1419040.}} 
\author{Buyang Li\thanks{Department of Mathematics,
Nanjing University, Nanjing, 
210093, Jiangsu, P.R. China.
(buyangli@nju.edu.cn)} 
\and Zhimin Zhang\thanks{Beijing Computational
Science Research Center,
Beijing, 100084, P.R. China.\newline
\indent ~$^\ddagger$Department
of Mathematics, Wayne State University, Detroit, MI 48202, USA.
(zzhang@math.wayne.edu)}}
\begin{document}

\maketitle
\slugger{mms}{xxxx}{xx}{x}{x--x}%slugger should be set to mms, siap, sicomp, sicon, sidma, sima, simax, sinum, siopt, sisc, or sirev

\begin{abstract}{\small
We prove well-posedness of time-dependent
Ginzburg--Landau system in a
nonconvex polygonal domain,
and decompose the solution as a
regular part plus a singular part.
We see that the magnetic
potential is not in $H^1$ in general,
and the finite element method
(FEM) may give incorrect solutions.
To remedy this situation,
we reformulate the equations into
an equivalent system of elliptic and parabolic equations
based on the Hodge decomposition, which avoids direct calculation of the magnetic potential.
The essential unknowns of the reformulated system  
admit $H^1$ solutions and
can be solved correctly by the FEMs.
We then propose a decoupled and linearized
FEM to solve the reformulated equations
and present error estimates
based on proved regularity of the solution.
Numerical examples are provided to support our
theoretical analysis and show the efficiency of the method.
}
\end{abstract}

\begin{keywords}{\small
superconductivity, 
reentrant corner, singularity, well-posedness, 
finite element method, convergence,
Hodge decomposition 
}
\end{keywords}

\begin{AMS}{\small
35Q56, 35K61, 65M12, 65M60
}
\end{AMS}

\pagestyle{myheadings}
\thispagestyle{plain}
\markboth{}{}

\section{Introduction}
\setcounter{equation}{0} 

The Ginzburg--Landau theory,
initially introduced by Ginzburg and Landau \cite{GL}
and subsequently extended to the time-dependent case 
by Gor'kov and Eliashberg
\cite{GE}, %is a macroscopic phenomenological model
are widely used to describe the phenomena of superconductivity 
in both low and high temperatures \cite{Gennes,Tinkham}.
In a two-dimensional domain $\Omega\subset\R^2$, 
the time-dependent Ginzburg--Landau model (TDGL) 
is governed by two equations 
(with the Lorentz gauge),   
\begin{align}
&\eta\frac{\partial \psi}{\partial t} 
+ \bigg(\frac{i}{\kappa} \nabla + \mathbf{A}\bigg)^{2} \psi
 + (|\psi|^{2}-1) \psi 
 -i\eta \kappa \psi \nabla\cdot{\bf A} = 0,
\label{PDE1}\\[5pt]
&\frac{\partial \mathbf{A}}{\partial t} 
+ \nabla\times(\nabla\times{\bf A})
-\nabla(\nabla\cdot{\bf A}) 
+  {\rm Re}\bigg[\psi^*\bigg(\frac{i}{\kappa} \nabla 
+ \mathbf{A}\bigg) \psi\bigg] =  \nabla\times f ,
\label{PDE2}
\end{align}
where $\eta$ and $k$ are given positive constants,
the order parameter $\psi$ is an unknown
complex scalar function and $\psi^*$ 
denotes the complex conjugate of $\psi$, the real-vector 
valued
function ${\bf A}=(A_1,A_2)$ denotes the 
unknown magnetic potential, and the scalar function $f$ 
denotes the external magnetic field,
and we have used the notations 
\begin{align*}
&\nabla\times {\bf A}
=\frac{\partial A_2}{\partial x_1}-\frac{\partial A_1}{\partial x_2},
\qquad 
\nabla\cdot {\bf A}=\frac{\partial A_1}{\partial x_1}
+\frac{\partial A_2}{\partial x_2},\\
&\nabla\times f=\bigg(\frac{\partial f}{\partial x_2},\,
-\frac{\partial f}{\partial x_1}\bigg),\quad 
\nabla\psi=\bigg(\frac{\partial \psi}{\partial x_1},\,
\frac{\partial \psi}{\partial x_2}\bigg).
\end{align*}
The natural boundary and initial conditions 
for this problem are  
\begin{align}
& \nabla \psi\cdot{\bf n}= 0,
\quad {\bf A}\cdot{\bf n}=0, 
\quad \nabla\times{\bf A}= f  , 
\quad \mathrm{on}\,\,\,\, \partial \Omega \times (0,T),&
\label{bc}\\
& \psi(x,0) = \psi_{0}(x), \quad \mathbf{A}(x,0) =
\mathbf{A}_{0}(x), \qquad\, \mathrm{in}\,\,\,\,  \Omega \, ,
\label{init}
\end{align}
where $\mathbf{n}$ denotes the unit outward normal vector on the boundary
$\partial\Omega$.

The TDGL has been widely studied 
both theoretically and numerically. 
Existence and uniqueness of the solution 
for \refe{PDE1}-\refe{PDE2} in a smooth domain 
were proved by Chen et al. \cite{CHL},
where equivalence of \refe{PDE1}-\refe{PDE2} to
the Ginzburg--Landau equations under the temporal gauge 
was proved. 
Various numerical methods for solving the 
TDGL were reviewed in \cite{Du92,Du05}. 
In contrast with the many numerical approximation schemes, 
numerical analysis of the model seems very limited so far.
Error analysis of a Galerkin finite element method (FEM) 
with an implicit backward Euler time-stepping scheme was presented in 
\cite{CH95,DuFEM94}, where optimal-order convergence rate 
of the numerical solution was proved for sufficiently regular solution. 
A linearized Crank--Nicolson scheme was proposed in 
\cite{Mu97} for a regularized TDGL under the temporal gauge
without error analysis.
An alternating Crank--Nicolson scheme was proposed
in \cite{MH98} and error estimates were presented for
a regularized TDGL
under the grid-ratio restriction 
$\tau=O(h^{\mbox{\tiny$\frac{11}{12}$}})$,
where $\tau$ and $h$ are the time-step size and spatial mesh size.
Although convergence of the numerical solutions has been 
proved in \cite{CH95,DuFEM94,MH98} in smooth domains, 
these error estimates may not hold in a domain with corners, 
where the regularity of the solution may not satisfy the conditions 
required in the analysis. 
It has been reported in \cite{GLS,Mu97} that 
the numerical solution of the magnetic potential
by the FEM often exhibits undesired singularities around a corner.
%which is not consistent with the exact solution.
To resolve this problem, 
a mixed FEM was proposed in \cite{Chen97} to approximate 
the triple $(\nabla\times {\bf A}$, $\nabla\cdot{\bf A},{\bf A})$ in  
a finite element subspace of 
$H^1(\Omega)\times H^1(\Omega)\times{\bf L}^2(\Omega)$, 
which requires less regularity of ${\bf A}$ intuitively, 
and error estimates of the finite element solution were presented 
under the assumption that ${\bf A}$ is in 
${\bf H}^1_{\rm n}(\Omega):=\{{\bf a}\in H^1(\Omega)^2: 
{\bf a}\cdot{\bf n}=0~\,\mbox{on}~\,\partial\Omega\}$. 
Recently, an optimal-order error estimate 
of the FEM with a linearized Crank--Nicolson 
scheme was presented in \cite{GLS} 
without restriction on the grid ratio, but the analysis requires 
stronger regularity of the solution and the domain.
%No numerical examples were provided in \cite{Chen97}.
On one hand, existing theoretical and numerical analysis of the model 
all require the magnetic potential to be
in ${\bf H}^1_{\rm n}(\Omega)$. 
In a domain with reentrant corners, however,
the magnetic potential may not be in ${\bf H}^1_{\rm n}(\Omega)$
and well-posedness of the TDGL remains open.
%as ${\bf H}^1_{\rm n}(\Omega)$ is no longer equivalent 
%to the correct solution space
%${\bf H}_{\rm n}({\rm curl},{\rm div})$  
%(see Section 2 for its definition). 
On the other hand, numerical approximations of the TDGL 
in domains with reentrant corners are important for physicists 
to study the effects of surface defects in superconductivity  
\cite{ASPM,VMB03}, which are often 
accomplished by solving \refe{PDE1}-\refe{PDE2} directly
with the finite element or finite difference methods,
without being aware of the danger of these numerical methods.

In this paper, we study the TDGL in a nonconvex polygon,
possibly with reentrant corners.
We shall prove that the system {\rm\refe{PDE1}-\refe{init}}
is well-posed,
with ${\bf A}\in L^\infty((0,T);H^s(\Omega)^2)$
for some $s\in(0,1)$ which depends on the interior angles 
of the reentrant corners.
As shown in the numerical examples, with such low-regularity, 
the FEM may give an incorrect solution 
for the magnetic potential ${\bf A}$, which further pollutes the 
numerical solution of $\psi$  
due to the coupling of equations. 
We are interested in reformulating {\rm\refe{PDE1}-\refe{init}} 
into an equivalent form which can be solved correctly 
by the FEMs, as they are preferred when using
software packages and when 
other equations are coupled with the 
Ginzburg--Landau equations. 
%For this purpose, we reformulate the model into
%an equivalent form which can be solved correctly 
%by the $C^0$-FEMs. 
Our idea is to apply the Hodge decomposition
${\bf A} = \nabla\times u + \nabla v$, and consider
the projection of \refe{PDE2} onto the 
divergence-free and curl-free subspaces, 
respectively. Then {\rm\refe{PDE1}-\refe{init}}
is reformulated as
\begin{align}
&\eta\frac{\partial \psi}{\partial t} 
+ \bigg(\frac{i}{\kappa} \nabla + {\bf A}\bigg)^{2} \psi
 + (|\psi|^{2}-1) \psi-i\eta \kappa \psi \nabla\cdot{\bf A} = 0,
\label{RFPDE1}\\[5pt]
&\Delta p =-\nabla\times \bigg({\rm Re}\bigg[
\psi^*\bigg(\frac{i}{\kappa} \nabla 
+ \mathbf{A}\bigg) \psi\bigg]\bigg)\label{RFPDEp}\\[5pt]
&\Delta q =\nabla\cdot \bigg({\rm Re}\bigg[
\psi^*\big(\frac{i}{\kappa} \nabla 
+ \mathbf{A}\bigg) \psi\bigg]\bigg)\label{RFPDEq}\\[5pt]
&\frac{\partial u}{\partial t} -\Delta u
=  f-p ,\label{RFPDEu}\\[5pt]
&\frac{\partial v}{\partial t} -\Delta v
=  -q ,
\label{RFPDEv}
\end{align}
with the boundary and initial conditions
\begin{align}
&\nabla \psi\cdot{\bf n}= 0,
\quad p=0,\quad \nabla q\cdot{\bf n}=0,\quad
u =0 , \quad \nabla v\cdot{\bf n}=0, 
\quad \mathrm{on}\  \partial \Omega \times (0,T],&
\label{RFbc}\\
& \psi(x,0) = \psi_{0}(x), \quad u(x,0) =
u_{0}(x), \quad v(x,0) =
v_{0}(x),\quad \mathrm{in}\  \Omega \, ,
\label{RFinit}
\end{align}
where $\nabla \times p$ and $\nabla q$ are just
the divergence-free and curl-free parts of 
${\rm Re}\big[\psi^*\big(\frac{i}{\kappa} \nabla 
+ \mathbf{A}\big) \psi\big]$,
respectively, 
$u_0$ and $v_0$ are defined by
$$
\left\{\begin{array}{ll}
-\Delta u_0=\nabla\times{\bf A}_0 &\mbox{in}~~\Omega,\\
u_0=0 &\mbox{on}~~\partial\Omega,
\end{array}\right.
\qquad\mbox{and}\qquad
\left\{\begin{array}{ll}
\Delta v_0=\nabla\cdot{\bf A}_0 &\mbox{in}~~\Omega,\\
\partial_nv_0=0 &\mbox{on}~~\partial\Omega,
\end{array}\right.
$$
with $\int_\Omega v_0(x)\d x=0$. 
We shall prove that the solution of 
the projected TDGL \refe{RFPDE1}-\refe{RFinit} 
coincides with the solution of {\rm\refe{PDE1}-\refe{init}}.
Then we propose a decoupled and linearized FEM 
to solve \refe{RFPDE1}-\refe{RFinit}, 
and establish error estimates based on proved regularity of the solution.
Our main results are presented in Section 2, and we prove these 
results in Section 3-5. In Section 6, we present numerical
examples to support our theoretical analysis.
Due to limitations on pages, 
derivations of the system \refe{RFPDE1}-\refe{RFinit} 
are presented in a separate paper \cite{LZ2},
where the efficiency of the method is shown 
via numerical simulations in comparison with
the traditional approaches of solving 
the TDGL directly under the temporal gauge 
and the Lorentz gauge.

%We would like to mention that the Hodge decomposition 
%has been used to solve static electromagnetic problems before; 
%see \cite{AVV09,AM10,BCNS}. 
%%In particular, we are inspired by the work of Brenner et. al. \cite{BCNS}.
%Our method is based on the projection of 
%the time-dependent magnetic potential equation 
%onto the divergence-free and 
%curl-free subspaces, 
%which is different from these previous works.
%\medskip

\section{Main results}\label{femmethod}
\setcounter{equation}{0}

For any nonnegative integer $k$, we 
let $W^{k,p}(\Omega)$,  and ${\mathcal W}^{k,p}(\Omega)$ denote the the 
conventional Sobolev spaces
of real-valued and complex-valued 
functions defined in $\Omega$, respectively,
with $H^{k}(\Omega)=W^{k,2}(\Omega)$, 
${\mathcal  H}^{k}(\Omega)={\mathcal  W}^{k,2}(\Omega)$,
$L^2(\Omega)=H^0(\Omega)$ and
${\mathcal  L}^2(\Omega)={\mathcal  H}^0(\Omega)$;
see \cite{Adams}. 
For a positive real number $s_0=k+s$,
with $s\in (0,1)$, we define
$H^{s_0}(\Omega)=(H^{k}(\Omega),H^{k+1}(\Omega))_{[s]}$ via
the complex interpolation; see \cite{BL76}.
We denote $H^{s_0}=H^{s_0}(\Omega)$,
${\mathcal  H}^{s_0}={\mathcal  H}^{s_0}(\Omega)$, 
$L^p=L^p(\Omega)$, 
${\mathcal  L}^p={\mathcal  L}^p(\Omega)$, 
and let $\mathring H^1$ denote the subspace
of $H^1$ consisting of functions whose 
traces are zero on $\partial\Omega$.
For any two functions $f,g\in {\mathcal  L}^2$ we define
$$
(f,g)=\int_\Omega f(x)g(x)^*\d x ,
$$
where $g(x)^*$ denotes the complex conjugate of $g(x)$,
and define
\begin{align*}
&{\bf L}^p=L^p\times L^p ,
\qquad
{\bf H}^s=H^s \times H^s ,
\qquad
{\bf H}^1_{\rm n}(\Omega):=\{{\bf a}\in H^1\times H^1: 
{\bf a}\cdot{\bf n}=0~\,\mbox{on}~\,\partial\Omega\},\\
&{\bf H}_{\rm n}({\rm curl},{\rm div})=\{{\bf a}\in {\bf L}^2: \nabla\times {\bf a}\in L^2,
~\nabla\cdot{\bf a}\in L^2~\mbox{and}
~{\bf a}\cdot{\bf n}=0~\mbox{on}~\partial\Omega\},\\
&H({\rm curl})=\{g\in L^2: \nabla\times g\in {\bf L}^2\} .
\end{align*}

\begin{definition}\label{DefWSol}
{\bf(Weak solutions of \refe{PDE1}-\refe{init})}~~
{\it Let $\omega$ denote the maximal 
interior angle of the nonconvex polygon $\Omega$.
The pair $(\psi,{\bf A})$ is called a weak solution of 
{\rm\refe{PDE1}-\refe{init}} if
\begin{align*}
&\psi\in C([0,T];{\mathcal  L}^2)\cap 
L^\infty((0,T);{\mathcal  H}^1)\cap L^2((0,T);{\mathcal  H}^{1+s}),\\
&
\partial_t\psi,\Delta\psi\in L^2((0,T);{\mathcal  L}^2) ,
\quad |\psi|\leq 1~~\mbox{a.e.~in~\,}\Omega\times(0,T),\\
& {\bf A}\in C([0,T];{\bf L}^2)\cap 
L^{\infty}((0,T); {\bf H}_{\rm n}({\rm curl},{\rm div})) , \\
&
\partial_t{\bf A}\in L^2((0,T);{\bf L}^2),\quad
\nabla\times {\bf A},\nabla\cdot{\bf A}\in L^2((0,T);H^1),
\end{align*}
for any $s\in(1/2,\pi/\omega)$, with $\psi(\cdot,0)=\psi_0 $, 
${\bf A}(\cdot,0)={\bf A}_0$, 
and the variational equations 
\begin{align}
&\int_0^T\bigg[\bigg(\eta\frac{\partial \psi}{\partial t} ,\varphi\bigg)
+ \bigg(\bigg(\frac{i}{\kappa} \nabla + \mathbf{A}\bigg)  \psi,
\bigg(\frac{i}{\kappa} \nabla + \mathbf{A}\bigg)\varphi\bigg)\bigg]\d t\nn\\
&\qquad\qquad\quad +\int_0^T\bigg[\bigg( (|\psi|^{2}-1) \psi
 -i\eta \kappa \psi \nabla\cdot{\bf A},\varphi\bigg)\bigg]\d t = 0,
\label{VPDE1}\\[10pt]
&\int_0^T\bigg[\bigg(\frac{\partial \mathbf{A}}{\partial t} ,{\bf a}\bigg)
+ \big(\nabla\times{\bf A},\nabla\times{\bf a}\big)
+\big(\nabla\cdot{\bf A},\nabla\cdot{\bf a}\big) \bigg]\d t   \nn\\
&=
 \int_0^T\bigg[\big(f , \nabla\times {\bf a}\big)
 -\bigg( {\rm Re}\bigg[\psi^*\bigg(\frac{i}{\kappa} \nabla 
+ \mathbf{A}\bigg) \psi\bigg],{\bf a}\bigg) \bigg]\d t,
\label{VPDE2}
\end{align}
hold for all $\varphi\in L^2((0,T);{\mathcal  H}^1)$ 
and ${\bf a}\in L^2((0,T);{\bf H}_{\rm n}({\rm curl},{\rm div}))$.
}
\end{definition}\medskip

\begin{definition}\label{DefWSol2}
{\bf(Weak solutions of \refe{RFPDE1}-\refe{RFinit})}~~
{\it Let $\omega$ denote the maximal 
interior angle of the nonconvex polygon $\Omega$.
The quintuple $(\psi,p,q,u,v)$ is called a weak solution of 
{\rm\refe{RFPDE1}-\refe{RFinit}} if
\begin{align*}
&\psi\in C([0,T];{\mathcal  L}^2)\cap 
L^\infty((0,T);{\mathcal  H}^1)\cap L^2((0,T);{\mathcal  H}^{1+s}),\\
&
\partial_t\psi,\Delta\psi\in L^2((0,T);{\mathcal  L}^2) ,
\quad |\psi|\leq 1~~\mbox{a.e.~in~\,}\Omega\times(0,T),\\
&p\in L^\infty((0,T);\mathring H^1),\quad q\in L^\infty((0,T);H^1),\quad
u \in C([0,T];\mathring H^1) , \quad  v\in C([0,T];H^1),\\
& \partial_tu,\partial_tv,\Delta u,\Delta v
\in L^\infty((0,T);L^2)\cap L^2((0,T);H^1), 
\end{align*}
for any $s\in(1/2,\pi/\omega)$, with $\psi(\cdot,0)=\psi_0 $, 
$u(\cdot,0)=u_0$, $v(\cdot,0)=v_0$, 
and the variational equations 
\begin{align}
&\int_0^T\bigg[\bigg(\eta\frac{\partial \psi}{\partial t} ,\varphi\bigg)
+ \bigg(\bigg(\frac{i}{\kappa} \nabla + \mathbf{A}\bigg)  \psi,
\bigg(\frac{i}{\kappa} \nabla + \mathbf{A}\bigg)\varphi\bigg)\bigg]\d t\nn\\
&\qquad\qquad\quad +\int_0^T\bigg[\bigg( (|\psi|^{2}-1) \psi
-i\eta \kappa \psi \nabla\cdot{\bf A},
 \varphi\bigg) \bigg]\d t= 0,
\label{VPDE1-2}\\ 
&\int_0^T\big(\nabla p,\nabla\xi\big)\d t
=\int_0^T \bigg({\rm Re}\Big[\psi^*\Big(\frac{i}{\kappa} \nabla
 + \mathbf{A}\Big) \psi\Big],\nabla\times\xi\bigg)\d t\label{RFPDEp-2}\\[5pt]
&\int_0^T\big(\nabla q,\nabla\zeta\big) \d t=
\int_0^T\bigg({\rm Re}\Big[\psi^*\Big(\frac{i}{\kappa} \nabla 
+ \mathbf{A}\Big) \psi\Big],\nabla\zeta\bigg)\d t\label{RFPDEq-2}\\[5pt]
&\int_0^T\bigg[\bigg(\frac{\partial u}{\partial t},\theta\bigg) 
+ \big(\nabla u,\nabla\theta\big)\bigg]\d t
= \int_0^T\big( f-p,\theta\big) \d t,\label{RFPDEu-2}\\[5pt]
&\int_0^T\bigg[\bigg(\frac{\partial v}{\partial t},\vartheta\bigg) 
+\big(\nabla v,\nabla\vartheta\big)\bigg]\d t
=  -\int_0^T\big(q,\vartheta\big) \d t,
\label{RFPDEv-2} 
\end{align}
hold for all $\varphi\in L^2((0,T);{\mathcal  H}^1)$, 
$\xi,\theta\in L^2((0,T);\mathring H^1)$ and $\zeta,\vartheta\in L^2((0,T);H^1)$. 
}
\end{definition}

\setcounter{theorem}{0}
Our first result is the well-posedness 
and equivalence of the systems \refe{PDE1}-\refe{init} and
\refe{RFPDE1}-\refe{RFinit}, which are presented in the following theorem.\medskip

\begin{theorem}\label{MainTHM1}
{\bf(Well-posedness and equivalence of the two systems)}$~$\\
{\it
If $f\in L^\infty((0,T);L^2)\cap L^2((0,T);H({\rm curl}))$, 
$\psi_0\in {\mathcal  H}^1$,
${\bf A}_0\in {\bf H}_{\rm n}({\rm curl},{\rm div})$ and 
$|\psi_0|\leq 1$ a.e. in $\Omega$, then 
the system {\rm\refe{PDE1}-\refe{init}} 
admits a unique weak solution
in the sense of Definition {\rm\ref{DefWSol}},
and the system {\rm\refe{RFPDE1}-\refe{RFinit}} 
admits a unique solution  which coincides 
with the solution of {\rm\refe{PDE1}-\refe{init}}.

Moreover, if we let $x_j$, $j=1,\cdots,m$, 
be the reentrant corners of the 
domain $\Omega$, then the solution has the decomposition
\begin{align*}
&\psi(x,t)=\Psi(x,t)
+\sum_{j=1}^m\alpha_j(t)\Phi(|x-x_j|)|x-x_j|^{\pi/\omega_j}
\cos(\pi\Theta_j(x)/\omega_j) , \\[3pt]
&{\bf A}=\nabla\times u+\nabla v
\end{align*}
with
\begin{align*}
&u(x,t)=\widetilde u(x,t)+
\sum_{j=1}^m\beta_j(t)\Phi(|x-x_j|)|x-x_j|^{\pi/\omega_j}
\sin(\pi\Theta_j(x)/\omega_j),\\
&v(x,t)=\widetilde v(x,t)
+\sum_{j=1}^m\gamma_j(t)\Phi(|x-x_j|)|x-x_j|^{\pi/\omega_j}
\cos(\pi\Theta_j(x)/\omega_j),
\end{align*}
where $\Psi\in L^2((0,T);{\mathcal  H}^2)$, 
$\widetilde u,\widetilde v\in L^\infty((0,T);H^2)$, 
$\Phi(r)$ is a given smooth cut-off function which equals 
$1$ in a neighborhood of $0$,
$\Theta_j(x)$ is the angle shown in Figure \ref{LshapeD},
and $\alpha_j,\beta_j,\gamma_j\in L^2(0,T)$.
}
\end{theorem}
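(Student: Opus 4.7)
My plan is to establish the theorem entirely through the reformulated system, whose essential unknowns $u$ and $v$ admit $H^1$ regularity even on nonconvex polygons, and then transfer the results to the original system via the Hodge decomposition. First I would construct Galerkin approximants $(\psi_N,p_N,q_N,u_N,v_N)$ in finite-dimensional subspaces spanned by eigenfunctions of the Laplacian under the boundary condition appropriate to each unknown, setting ${\bf A}_N:=\nabla\times u_N+\nabla v_N$ so that (RFPDE1) is closed. The a priori estimates follow the standard Ginzburg--Landau hierarchy: testing (RFPDE1) against a cut-off of $(|\psi_N|^2-1)_+\psi_N$ yields $|\psi_N|\le 1$ a.e.; the elliptic equations (RFPDEp), (RFPDEq) then give $p_N,q_N$ in $L^\infty L^2$; testing (RFPDEu), (RFPDEv) against $\partial_t u_N,-\Delta u_N$ and their $v$-analogues gives $u_N,v_N\in L^\infty H^1\cap L^2 H^2$, hence ${\bf A}_N\in L^\infty{\bf H}_{\rm n}({\rm curl},{\rm div})$; and finally testing (RFPDE1) against $\partial_t\psi_N$ and $-\Delta\psi_N$ produces the $\psi$-bounds. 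Passing to the limit via Aubin--Lions compactness (strong convergence of $\psi_N$ in $L^2{\mathcal L}^4$ and of ${\bf A}_N$ in ${\bf L}^2$) handles every nonlinear term and produces a weak solution of (RFPDE1)--(RFinit).

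Uniqueness of the reformulated system is proved by subtracting two solutions, testing the differences in the natural norms, and closing with Gronwall, the pointwise bound $|\psi|\le 1$ controlling the nonlinear couplings. For the equivalence, given a solution $(\psi,p,q,u,v)$ I define ${\bf A}:=\nabla\times u+\nabla v$, verify ${\bf A}\cdot{\bf n}=0$ from the Dirichlet condition on $u$ and the Neumann condition on $v$, and use the identities $\nabla\times{\bf A}=-\Delta u$ and $\nabla\cdot{\bf A}=\Delta v$ together with (RFPDEp)--(RFPDEv) and integration by parts to recover (VPDE2); (VPDE1) coincides with (VPDE1-2). Conversely, a weak solution $(\psi,{\bf A})$ of (PDE1)--(init) admits a unique Hodge decomposition ${\bf A}=\nabla\times u+\nabla v$ with $u\in\mathring H^1$ and $v\in H^1$ normalised so that $\int_\Omega v=0$; running the same computation backwards shows the decomposed quintuple satisfies the reformulated system, and the uniqueness already established transfers to the original formulation.

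The singular decomposition is obtained by Grisvard's theory on polygonal domains applied at almost every time: (RFPDEu) is a Poisson problem for $u$ with homogeneous Dirichlet data, (RFPDEv) is a Poisson problem for $v$ with homogeneous Neumann data, and (RFPDE1) rewrites as a Neumann Poisson problem for $\psi$ with right-hand side in ${\mathcal L}^2$. Each such problem admits the canonical splitting into an $H^2$-regular part plus a finite sum of corner singular functions $\Phi(|x-x_j|)|x-x_j|^{\pi/\omega_j}$ times $\sin(\pi\Theta_j/\omega_j)$ in the Dirichlet case and $\cos(\pi\Theta_j/\omega_j)$ in the Neumann case; the coefficients $\alpha_j(t),\beta_j(t),\gamma_j(t)$ are read off by the standard dual singular functionals, and their $L^2(0,T)$ integrability follows from the time regularity of the right-hand sides already established.

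The hardest step I expect is the uniqueness proof for the \emph{original} system, where the weak solution ${\bf A}$ lies only in ${\bf H}_{\rm n}({\rm curl},{\rm div})$ and any energy identity involving $\nabla{\bf A}$ is unavailable; the cure is to lift each candidate solution into the reformulated framework through its Helmholtz decomposition before applying energy methods. A secondary technical difficulty is the Galerkin limit in the nonlinear right-hand sides $\nabla\times\mathrm{Re}[\psi_N^*(i\kappa^{-1}\nabla+{\bf A}_N)\psi_N]$ and $\nabla\cdot\mathrm{Re}[\,\cdot\,]$, which requires strong ${\bf L}^2$ convergence of ${\bf A}_N$; this is delivered by the $L^\infty H^1$ bound on $(u_N,v_N)$ through Aubin--Lions, which is precisely the gain made possible by working with the Hodge-decomposed variables instead of ${\bf A}$ directly.
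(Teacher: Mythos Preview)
Your overall route---building the theory through the reformulated system and then transferring to the original---is the reverse of the paper's, which runs the Galerkin scheme directly on \refe{PDE1}--\refe{PDE2} using eigenvectors of the curl--div form $M$ on ${\bf H}_{\rm n}({\rm curl},{\rm div})$ as the basis for ${\bf A}$, and proves uniqueness for the original system directly via the embedding ${\bf H}_{\rm n}({\rm curl},{\rm div})\hookrightarrow{\bf L}^4$ (Lemma~\ref{LemCDReg}). Your worry that ``any energy identity involving $\nabla{\bf A}$ is unavailable'' is therefore misplaced: one never needs the full gradient, only $\|\nabla\times{\bf E}\|_{L^2}+\|\nabla\cdot{\bf E}\|_{L^2}$, and those close a Gronwall loop (Section~\ref{SecUnique}). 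The equivalence in the paper is then argued in your second direction only: given the already-unique $(\psi,{\bf A})$, construct $(p,q,u,v)$ from it and verify $\nabla\times u+\nabla v={\bf A}$.

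More importantly, your Galerkin argument has a genuine gap. You claim that testing the discrete version of \refe{RFPDE1} against ``a cut-off of $(|\psi_N|^2-1)_+\psi_N$'' yields $|\psi_N|\le 1$. But this function does not lie in the $N$-dimensional eigenspace, so it is not an admissible test function, and the maximum-principle computation does not survive Galerkin projection. The paper does \emph{not} obtain $|\Psi_N|\le 1$ for its approximants. Instead it replaces the factor $\psi$ in the coupling term $-i\eta\kappa\psi\,\nabla\cdot{\bf A}$ by the bounded cut-off $\chi(\Psi_N)$ in the approximate equation \refe{DVPDE1}; this is precisely what allows the energy identity (testing with $\partial_t\Psi_N$, $\partial_t{\bf\Lambda}_N$) to close uniformly in $N$. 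Only \emph{after} passing to the limit is Lemma~\ref{UnBDPsi} invoked to obtain $|\psi|\le 1$ for the continuous solution, whence $\chi(\psi)=\psi$ and the modified equation reduces to the original. Without an analogous device, your downstream bounds on $p_N,q_N$---which rely on $|\psi_N|\le 1$ to place $\psi_N^*(i\kappa^{-1}\nabla+{\bf A}_N)\psi_N$ in $L^2$---do not close, and the $\partial_t\psi_N$ energy identity is likewise obstructed by the unbounded factor in $\psi_N\,\nabla\cdot{\bf A}_N$.
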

\begin{figure}[ht]
\vspace{0.1in}
\centering
\includegraphics{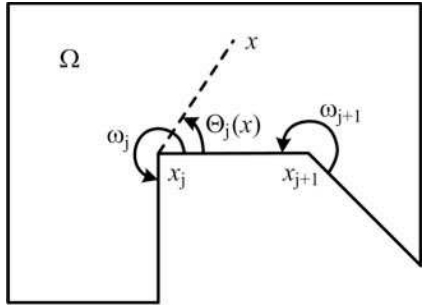}
\caption{\small Illustration of the domain $\Omega$, corner $x_j$,
angle $\omega_j$ and $\Theta_j(x)$.}
\label{LshapeD}
\end{figure}

Further regularity of the solution is presented below, 
which is needed in the analysis of the convergence 
of the numerical solution.\medskip

\begin{theorem}\label{MainTHM2}
{\bf(Further regularity)}$~$\\
{\it
If $f\in C([0,T];H({\rm curl}))$, $\nabla\times f \in L^2((0,T);H({\rm curl}))$,
$\partial_tf \in L^2((0,T);L^2)$, $\psi_0\in {\mathcal  H}^1$, $\Delta\psi_0\in {\mathcal  L}^2$, 
${\bf A}_0\in {\bf H}_{\rm n}({\rm curl},{\rm div})$, 
$\nabla\cdot{\bf A}_0,\nabla\times{\bf A}_0\in {\bf H}^1$,
$|\psi_0|\leq 1$ a.e. in $\Omega$,
and the compatibility conditions
$$
\partial_n\psi_0=0\quad\mbox{and}\quad \nabla\times{\bf A}_0=f(\cdot,0)
\quad\mbox{on}\quad \partial\Omega 
$$ 
are satisfied,
then the solution of {\rm\refe{RFPDE1}-\refe{RFinit}}
possesses the regularity
\begin{align*}
&\psi\in C([0,T];{\mathcal  H}^{1+s}),
\quad 
\partial_{t}\psi
\in L^2((0,T);{\mathcal  H}^{1+s}),
\quad 
\partial_{tt}\psi\in L^2((0,T);{\mathcal  L}^2), \\
&p,q\in L^\infty((0,T);H^{1}),\quad u,v\in C([0,T];H^{1+s}),\\
& \partial_{t}u,\partial_{t}v
\in L^2((0,T);H^{1+s}),
\quad 
\partial_{tt}u,\partial_{tt}v\in L^2((0,T);L^2)
\end{align*}
for any $s\in(1/2,\pi/\omega)$.
}
\end{theorem}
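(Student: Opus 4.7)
The plan is to bootstrap the regularity established in Theorem \ref{MainTHM1} by differentiating the system \refe{RFPDE1}-\refe{RFinit} in time and combining energy estimates for the time-derivatives with elliptic regularity theory on nonconvex polygons. The starting point is that, by Theorem \ref{MainTHM1}, we already have a weak solution whose singular part near each reentrant corner is captured explicitly, and in particular $\mathbf{A}=\nabla\times u+\nabla v$ with $u,v\in L^{\infty}((0,T);H^{1+s})$ for any $s\in(1/2,\pi/\omega)$. I will use this decomposition as the basic tool for controlling products involving $\mathbf{A}$.

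First I would verify that the hypotheses upgrade the initial data. From $\nabla\cdot\mathbf{A}_0,\nabla\times\mathbf{A}_0\in H^1$, $\mathbf{A}_0\cdot\mathbf{n}=0$, the compatibility $\nabla\times\mathbf{A}_0=f(\cdot,0)$, and the definition of $u_0,v_0$, elliptic regularity on the nonconvex polygon (Grisvard) yields $u_0,v_0\in H^{1+s}$ and $\Delta u_0,\Delta v_0\in L^2$. Formally evaluating \refe{RFPDE1}, \refe{RFPDEu}, \refe{RFPDEv} at $t=0$ together with $\Delta\psi_0\in\mathcal{L}^2$, $\partial_n\psi_0=0$, and $\mathbf{A}_0\in \mathbf{L}^{\infty}$ (from $\mathbf{A}_0\in \mathbf{H}^{1+s}$) gives $\partial_t\psi(\cdot,0),\partial_tu(\cdot,0),\partial_tv(\cdot,0)\in L^2$, and these values satisfy zero Neumann, Dirichlet, Neumann boundary conditions, respectively. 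This supplies the admissible initial data needed below.

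Next I would differentiate \refe{RFPDE1}, \refe{RFPDEu}, \refe{RFPDEv} in time, obtaining linear parabolic equations for $\partial_t\psi$, $\partial_tu$, $\partial_tv$ whose source terms depend on $\partial_tp,\partial_tq,\partial_tf$ and on $\partial_t\mathbf{A}$ multiplied by $\psi$, $\nabla\psi$, or $\mathbf{A}$. Testing the equation for $\partial_t\psi$ against $\partial_t\psi$ and the equations for $\partial_tu,\partial_tv$ against $-\Delta\partial_tu$ and $-\Delta\partial_tv$ (legitimate because of the boundary conditions), integrating by parts, and using the uniform bound $|\psi|\le 1$ together with the $L^{\infty}_t H^{1+s}$ bound on $\mathbf{A}$ from Theorem \ref{MainTHM1}, I would close a Gronwall estimate that yields
\begin{equation*}
\partial_t\psi\in L^{\infty}((0,T);\mathcal{L}^2)\cap L^2((0,T);\mathcal{H}^1),\qquad
\partial_tu,\partial_tv\in L^{\infty}((0,T);H^1)\cap L^2((0,T);H^2\cap\cdots).
\end{equation*}
To upgrade the spatial smoothness of $\partial_tu,\partial_tv$ to $H^{1+s}$, and of $\psi$ itself at each fixed time to $\mathcal{H}^{1+s}$, I would re-read \refe{RFPDEu}, \refe{RFPDEv}, and \refe{RFPDE1} as elliptic equations in $x$ with the already-controlled $\partial_t$ terms moved to the right-hand side; the Grisvard shift theorem on nonconvex polygons then gives precisely the $H^{1+s}$ bound for $s<\pi/\omega$ provided the right-hand sides lie in $L^2_tL^2_x$, which is ensured by the bounds just obtained and by the elliptic estimates $\|p\|_{H^1}+\|q\|_{H^1}\lesssim \|\psi\|_{\mathcal{H}^1}(1+\|\mathbf{A}\|_{\mathbf{L}^\infty})$ derived from \refe{RFPDEp-2}-\refe{RFPDEq-2} with the compatible boundary conditions in \refe{RFbc}.

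Finally, to obtain $\partial_{tt}\psi,\partial_{tt}u,\partial_{tt}v\in L^2((0,T);L^2)$ and continuity in time at the $H^{1+s}$ level, I would differentiate the system a second time in $t$ and test against $\partial_{tt}\psi$, $\partial_{tt}u$, $\partial_{tt}v$, using the first-order bounds just produced to absorb the nonlinear quadratic and cubic interactions; continuity of $\psi,u,v$ into $H^{1+s}$ then follows from the standard interpolation $L^{\infty}_tH^{1+s}\cap H^1_tH^{-1+s}\hookrightarrow C_tH^{1+s}$. The main obstacle will be the nonlinear coupling: the source in the $\psi$-equation contains $\mathbf{A}\cdot\nabla\psi$, and the source in the $p,q$ equations contains $\psi^*\mathbf{A}\psi$, so controlling their time derivatives in $L^2_tL^2_x$ forces me to estimate products $\partial_t\mathbf{A}\cdot\nabla\psi$ and $\mathbf{A}\cdot\partial_t\nabla\psi$ in a nonconvex geometry where $\mathbf{A}$ is only $H^{1+s}$ with $s<1$. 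I would handle this using the Hodge decomposition $\mathbf{A}=\nabla\times u+\nabla v$, the embedding $H^{1+s}\hookrightarrow L^{\infty}$ for $s>0$ in two dimensions (applied to the scalar potentials $u,v$, whose gradients give $\mathbf{A}$), and the regular-plus-singular splitting of Theorem \ref{MainTHM1} to separate the smooth part from the explicit corner singularity, whose pointwise behaviour is known and integrable.
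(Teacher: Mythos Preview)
Your overall strategy---bootstrap by differentiating in time, combine energy estimates with elliptic shift---is in the same spirit as the paper, but several of the concrete steps you propose do not go through, and you omit the one tool the paper actually relies on.

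\textbf{A regularity claim that is false.} You write that Theorem~\ref{MainTHM1} gives an ``$L^\infty_tH^{1+s}$ bound on $\mathbf{A}$''; it does not. Only the scalar potentials $u,v$ lie in $L^\infty((0,T);H^{1+s})$, so $\mathbf{A}=\nabla\times u+\nabla v$ is merely in $L^\infty((0,T);\mathbf{H}^s)\hookrightarrow L^\infty((0,T);\mathbf{L}^{p_s})$ with $p_s>4$. Your later remark that the embedding $H^{1+s}\hookrightarrow L^\infty$ ``applied to the scalar potentials $u,v$, whose gradients give $\mathbf{A}$'' controls the nonlinear products is therefore misleading: it gives $u,v\in L^\infty$, not $\mathbf{A}\in \mathbf{L}^\infty$, and in a nonconvex polygon $\mathbf{A}$ is genuinely not in $\mathbf{L}^\infty$.

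\textbf{The missing ingredient.} When you differentiate \refe{RFPDE1} in time, the source contains $\partial_t\mathbf{A}\cdot\nabla\psi$ and, through $\dot p,\dot q$, the term $\partial_t\psi^*\nabla\psi$. To place these in $L^2_tL^2_x$ you need $\nabla\psi\in L^\infty((0,T);\mathcal{L}^4)$ and $\partial_t\mathbf{A}\in L^2((0,T);\mathbf{L}^4)$. Neither follows from Theorem~\ref{MainTHM1}, which only gives $\nabla\psi\in L^\infty_t\mathcal{L}^2\cap L^2_t\mathcal{L}^{p_s}$ and $\partial_t\mathbf{A}\in L^2_t\mathbf{L}^2$. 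The paper obtains these two facts \emph{before} differentiating: it applies maximal $L^p$-regularity (Lemma~\ref{MaxLp}) to \refe{PDE1} to get $\psi\in\bigcap_p W^{1,p}_tL^2\cap L^p_tH^{1+s}\hookrightarrow L^\infty_tW^{1,4}$, and then takes the curl and divergence of \refe{PDE2} to upgrade $\partial_t(\nabla\times\mathbf{A}),\partial_t(\nabla\cdot\mathbf{A})$ to $L^2_tL^2$, hence $\partial_t\mathbf{A}\in L^2_t\mathbf{H}_{\rm n}({\rm curl},{\rm div})\hookrightarrow L^2_t\mathbf{L}^4$. Without an analogue of these steps your Gronwall loop for $(\partial_t\psi,\partial_tu,\partial_tv)$ will not close.

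\textbf{No second time differentiation is needed.} The paper differentiates the $\psi$-equation once and tests against $\partial_{tt}\psi$ (the standard ``strong'' parabolic energy estimate), which directly yields $\partial_{tt}\psi\in L^2_t\mathcal{L}^2$, $\Delta\partial_t\psi\in L^2_t\mathcal{L}^2$ and hence $\partial_t\psi\in L^2_tH^{1+s}$ via Lemma~\ref{HsPoissEq}. Your plan to differentiate twice and test against $\partial_{tt}\psi$ would require $\partial_{tt}\psi(\cdot,0)\in\mathcal{L}^2$, i.e.\ a second-order compatibility condition on the data that is not assumed. Similarly, the interpolation $L^\infty_tH^{1+s}\cap H^1_tH^{-1+s}\hookrightarrow C_tH^{1+s}$ you quote is false in general; continuity into $H^{1+s}$ follows instead directly from $\partial_t\psi\in L^2_tH^{1+s}$ once that is established.
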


To solve the reformulated system \refe{RFPDE1}-\refe{RFinit}, 
we propose a decoupled and linearized Galerkin
FEM. For this purpose, we let $\pi_h$ be a quasi-uniform 
triangulation of the domain $\Omega$
and denote the mesh size by $h$. Let
$\mathcal{V}_{h}^{1}$ denote the space 
of complex-valued $C^0$ 
piecewise linear functions subject to the triangulation, 
let $V_{h}^{1}$ denote the space of real-valued
$C^0$ piecewise linear functions, 
and set $ \mathring V_{h}^{1}= \{\varphi\in V_{h}^{1}:
\varphi=0~~\mbox{on}~~\partial\Omega\}$. 
Clearly, $\mathcal{V}_{h}^{1}$, 
$\mathring V_{h}^{1}$ and $V_{h}^{1}$ are
finite dimensional subspaces of $\mathcal{H}^{1}$, $\mathring H^1$
and $H^1$, respectively. Let $I_{h}$ be
the commonly used Lagrange interpolation operator onto the
finite element spaces. For any positive integer $N$, 
we let $0=t_0<t_1<\cdots<t_N=T$
be a uniform partition of the time interval $[0,T]$ and
set $\tau = T/N$. For any sequence of functions $\varphi^n$, we define 
${D_{\tau}} \varphi^{n+1} := (\varphi^{n+1}-\varphi^{n})/\tau$,
and we define a cut-off function $\chi:\C\rightarrow\C$ by
$$
\chi(z)=z/\max(|z|,1) ,\quad\forall~ z\in\C ,
$$
which is Lipschitz continuous and satisfies that
$|\chi(z)|\leq 1$, $\forall\, z\in\C$.
%and possesses Lipschitz continuity:
%$$|\chi(z_1)-\chi(z_2)|\leq |z_1-z_2| ,\quad\forall\, z_1,z_2\in\C .$$

We look for $\psi^{n+1}_h\in {\mathcal  V}_{h}^{1}$, 
$p^{n+1}_h,u^{n+1}_h\in
\mathring V_h^{1}$ and
$q^{n+1}_h,v^{n+1}_h\in
V_h^{1}$ satisfying the equations
\begin{align}
&\big(D_\tau \psi^{n+1}_h, \varphi\big) +\big
((i\kappa^{-1}\nabla + \mathbf{A}^{n}_h)
\psi^{n+1}_h,(i\kappa^{-1}\nabla + \mathbf{
A}^{n}_h)  \varphi\big) \nn\\
&\qquad +\big
((|\psi^{n}_h|^{2}-1) \psi^{n+1}_h,
\varphi\big)
+\big(i\eta\kappa {\bf A}_h^n,\nabla((\psi_h^{n+1})^* \varphi\big)= 0,
\label{FEMEq1}\\
&(\nabla p^{n+1}_h,\nabla \xi)=\big({\rm Re}
[\chi(\psi^n_h)^*(i\kappa^{-1}\nabla\psi^{n+1}_h 
+ \mathbf{A}^n_h \psi^{n+1}_h)],
\nabla\times \xi\big)
\label{FEMEq2}\\
&(\nabla q^{n+1}_h,\nabla \zeta)
=\big({\rm Re}
[\chi(\psi^n_h)^*(i\kappa^{-1}\nabla\psi^{n+1}_h 
+ \mathbf{A}^n_h \psi^{n+1}_h)],\nabla \zeta\big)
\label{FEMEq3}\\
&\big(D_\tau u^{n+1}_h, \theta\big) 
+\big(\nabla u^{n+1}_h,\nabla\theta\big)=(f^{n+1}-p^{n+1}_h,\theta)\label{FEMEq4}\\
&\big(D_\tau v^{n+1}_h, \vartheta\big) 
+\big(\nabla v^{n+1}_h,\nabla\vartheta\big)
=(-q^{n+1}_h,\vartheta),\label{FEMEq5}
\end{align}
for all $\varphi\in{\mathcal  V}^1_h$, 
$\xi,\theta\in \mathring V^1_h$ and $\zeta,\vartheta\in V^1_h$, 
with ${\bf A}_h^n=\nabla\times u_h^n+\nabla v_h^n$, 
where $u^0_h\in\mathring V^1_h$ 
and $v^0_h\in V^1_h$ are solved from
\begin{align}
&(\nabla u^{0}_h,\nabla \xi)=\big({\bf A},
\nabla\times \xi\big)
,\quad\forall~\xi\in\mathring V_h^1 ,
\label{FEMEqu0}\\
&(\nabla v^{0}_h,\nabla \zeta)
=\big({\bf A},\nabla\cdot \zeta\big)
,\quad~\, \forall~\zeta\in V_h^1 ,
\label{FEMEqv0}
\end{align}
and $\psi^0_h$ is the Lagrange interpolation of $\psi^0$.

For the proposed scheme, we have the following theorem
concerning the convergence of the numerical solution.\medskip

\begin{theorem}\label{MainTHM3}
{\bf(Convergence of the finite element solution)}~\\
{\it The finite
element system {\rm(\ref{FEMEq1})-(\ref{FEMEq5})} admits a unique
solution $(\psi_h^{n},p_h^{n},q_h^{n},
u_h^{n},v_h^{n})$ when $\tau<\eta/4$ and, 
under the assumptions of Theorem {\rm\ref{MainTHM2}},   
\begin{align*}
&\max_{1\leq n\leq N}\big( \|u^n-u^n_h\|_{H^1} 
+ \|v^n-v^n_h\|_{H^1} 
+ \|{\bf A}^n-{\bf A}_h^n\|_{L^2}
+ \|\psi^n-\psi^n_h\|_{{\mathcal  L}^2} \big)\leq C(\tau+h^s) ,
\end{align*}
where $C$ is a positive constant independent of $\tau$ and $h$.
}
\end{theorem}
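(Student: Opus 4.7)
The plan is to prove Theorem \ref{MainTHM3} in four stages: (i) unique solvability of the discrete scheme \refe{FEMEq1}--\refe{FEMEq5} at each time level, (ii) a priori stability bounds on the discrete solution, (iii) an error splitting via Ritz projections together with a discrete Gronwall argument, and (iv) a careful treatment of the nonlinear ${\bf A}\psi$ coupling under the low regularity provided by Theorem \ref{MainTHM2}.

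For (i), once $(\psi_h^n,u_h^n,v_h^n)$ is available, equation \refe{FEMEq1} becomes a finite-dimensional linear problem for $\psi_h^{n+1}$. Taking $\varphi=\psi_h^{n+1}$ and the real part, the diagonal term $(\eta/\tau)\|\psi_h^{n+1}\|_{L^2}^2$ competes with $-\|\psi_h^{n+1}\|_{L^2}^2$ from $(-\psi_h^{n+1},\psi_h^{n+1})$ and with the cross term generated by $(i\eta\kappa{\bf A}_h^n,\nabla((\psi_h^{n+1})^*\psi_h^{n+1}))$. Bounding the latter by $\varepsilon\|(i\kappa^{-1}\nabla+{\bf A}_h^n)\psi_h^{n+1}\|_{L^2}^2 + C_\varepsilon\|\psi_h^{n+1}\|_{L^2}^2$, the assumption $\tau<\eta/4$ secures coercivity. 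Equations \refe{FEMEq2}--\refe{FEMEq3} are then standard scalar Poisson problems (the second posed modulo constants, its right-hand side having zero mean after integration by parts), and \refe{FEMEq4}--\refe{FEMEq5} are coercive backward-Euler steps, so the full discrete system is uniquely solvable.

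For (ii) and (iii), I would follow a combined-analysis template. Testing \refe{FEMEq1} by $\psi_h^{n+1}$ and \refe{FEMEq4}--\refe{FEMEq5} by $u_h^{n+1}$ and $v_h^{n+1}$, while using \refe{FEMEq2}--\refe{FEMEq3} to express the $p,q$-loads via ${\rm Re}[\chi(\psi_h^n)^*(i\kappa^{-1}\nabla+{\bf A}_h^n)\psi_h^{n+1}]$, yields a discrete energy identity. The cut-off $\chi$ is essential here: the pointwise bound $|\chi(\psi_h^n)|\leq 1$ eliminates the need for any a priori $L^\infty$ control on $\psi_h^n$. Discrete Gronwall then produces uniform bounds on $\|\psi_h^n\|_{{\mathcal H}^1}$, $\|u_h^n\|_{H^1}$, and $\|v_h^n\|_{H^1}$. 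For the error estimate I would split, e.g., $\psi^n-\psi_h^n=(\psi^n-\widetilde R_h\psi^n)+(\widetilde R_h\psi^n-\psi_h^n)=:\rho_\psi^n+\theta_\psi^n$ via Ritz projections into ${\mathcal V}_h^1$, $\mathring V_h^1$, $V_h^1$ (and analogously for $u,v,p,q$). Theorem \ref{MainTHM2} provides $\psi(t),u(t),v(t)\in H^{1+s}$ uniformly in $t$, so that $\|\rho\|_{H^1}=O(h^s)$ and $\|\rho\|_{L^2}=O(h^{2s})$ by interpolation on quasi-uniform meshes; meanwhile $\partial_{tt}\psi,\partial_{tt}u,\partial_{tt}v\in L^2(0,T;L^2)$ produces $O(\tau)$ time-discretization residuals. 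Testing the resulting error system for $\theta$ against itself and applying discrete Gronwall then yields the stated bound.

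The hard part will be (iv). Since ${\bf A}$ lies only in ${\bf H}^s$ with $s<1$, no $L^\infty$ control on ${\bf A}$ or ${\bf A}_h^n$ is available, and terms of the form $(({\bf A}-{\bf A}_h^n)\psi,\nabla\varphi)$ appearing in the $\psi$-error equation must be handled through $\|{\bf A}-{\bf A}_h^n\|_{L^2}\leq\|u-u_h^n\|_{H^1}+\|v-v_h^n\|_{H^1}=O(h^s)$ combined with the maximum-principle bound $|\psi|\leq 1$ from Definition \ref{DefWSol2} and a Gagliardo--Nirenberg-type interpolation for $\theta_\psi^{n+1}$; this keeps the Gronwall constant finite without losing any inverse power of $h$. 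A secondary delicacy is the replacement of $\chi(\psi_h^n)$ by $\psi^n$ in the $p,q$-error equations: Lipschitz continuity of $\chi$ together with $|\psi^n|\leq 1$ gives $|\chi(\psi_h^n)-\psi^n|\leq|\psi_h^n-\psi^n|$ pointwise, so the cut-off contributes only a term of the same order as the error being estimated, which is absorbed by Gronwall.
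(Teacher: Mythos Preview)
Your outline captures the Ritz-projection error splitting and the role of the cut-off $\chi$, but it misses the central mechanism the paper uses to close the argument, and without it step (iv) does not go through.

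The gap is the control of $\|{\bf A}_h^n\|_{L^4}$. The a priori energy estimates in (ii) only give $u_h^n,v_h^n$ bounded in $H^1$, hence ${\bf A}_h^n=\nabla\times u_h^n+\nabla v_h^n$ bounded in $L^2$, not in $L^4$; your claim of a uniform $\|\psi_h^n\|_{{\mathcal H}^1}$ bound is likewise unavailable, since the energy identity controls only $\|(i\kappa^{-1}\nabla+{\bf A}_h^n)\psi_h^{n+1}\|_{L^2}$ and you cannot split off $\nabla\psi_h^{n+1}$ without an $L^4$ bound on ${\bf A}_h^n$. In the $\psi$-error equation, testing with $\theta_\psi^{n+1}$ produces not only the term $(({\bf A}-{\bf A}_h^n)\psi,\nabla\theta_\psi^{n+1})$ you mention but also $(|{\bf A}_h^n|^2\theta_\psi^{n+1},\theta_\psi^{n+1})$, $({\bf A}_h^n\cdot\nabla\theta_\psi^{n+1},\theta_\psi^{n+1})$, and $(|{\bf A}_h^n|^2\rho_\psi^{n+1},\theta_\psi^{n+1})$. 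These require $\|{\bf A}_h^n\|_{L^4}$ as a coefficient; if that quantity is unbounded, your Gronwall constant blows up and the argument collapses.

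The paper handles this by an induction on $\|{\bf A}_h^n\|_{L^4}\le\max_k\|{\bf A}^k\|_{L^4}+1$. To close it, the error equations for $u$ and $v$ are tested with $D_\tau e_{u,h}^{n+1}$ and $D_\tau e_{v,h}^{n+1}$ (not with $e_{u,h}^{n+1}$, $e_{v,h}^{n+1}$ as you propose), which yields a bound on $\|\Delta_h e_{u,h}^{n+1}\|_{L^2}+\|\Delta_h e_{v,h}^{n+1}\|_{L^2}$ in terms of the $p,q$ errors. The discrete regularity estimate of Lemma~\ref{LemdisEmb}, namely $\|\nabla\theta_h\|_{L^4}\le C\|\Delta_h\theta_h\|_{L^2}$, converts this into $\|e_{{\bf A},h}^{n+1}\|_{L^4}$ control. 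Under the induction hypothesis one then derives the full error bound, and a final case split $\tau\ge h$ versus $\tau<h$ (combining the $\ell^2_\tau L^4$ and $\ell^\infty_\tau L^2$ bounds on $e_{{\bf A},h}$, the latter with an inverse inequality) shows $\|{\bf A}_h^{m+1}-{\bf A}^{m+1}\|_{L^4}\le C(\tau^{1/2}+h^{s-1/2})\le 1$ for $\tau,h$ small, completing the induction. Your plan omits both Lemma~\ref{LemdisEmb} and this bootstrapping step, so as written it cannot control the nonlinear coupling.

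A minor point: in (i), the cross term $(i\eta\kappa{\bf A}_h^n,\nabla((\psi_h^{n+1})^*\psi_h^{n+1}))$ is purely imaginary (since ${\bf A}_h^n$ and $\nabla|\psi_h^{n+1}|^2$ are real), so its real part vanishes outright; no $\varepsilon$-absorption is needed, and the condition $\tau<\eta/4$ comes solely from the $-\|\psi_h^{n+1}\|_{L^2}^2$ term.
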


In the rest part of this paper, we prove Theorem 
\ref{MainTHM1}--\ref{MainTHM3}. 
To simplify the notations, we denote by $C$ a generic positive
constant which may be different at each occurrence but
is independent of $n$, $\tau$ and $h$.\medskip

\section{Proof of Theorem \ref{MainTHM1}}\label{WPness}
\setcounter{equation}{0}

In this section, we prove well-posedness of the 
Ginzburg--Landau equations in a nonconvex polygon 
and equivalence of the two formulations 
\refe{PDE1}-\refe{init} and
\refe{RFPDE1}-\refe{RFinit}.
Compared with smooth domains, 
in a nonconvex polygon, 
the space ${\bf H}_{\rm n}({\rm curl},{\rm div})$ is not equivalent 
to ${\bf H}^1_{\rm n}(\Omega)$ and is not embedded
into ${\bf L}^p$ for large $p$. 
Convergence of the nonlinear terms 
of the approximating solutions 
needs to be proved based on the weaker embedding 
${\bf H}_{\rm n}({\rm curl},{\rm div})
\hookrightarrow\hookrightarrow {\bf L}^4$
in the compactness argument,
and uniqueness of solution needs to 
be proved based on weaker
regularity of the solution.

\subsection{Preliminaries}
 
Firstly, we cite a lemma concerning the regularity 
of Poisson's equations
in a nonconvex polygon \cite{Dauge,Grisvard}.
\begin{lemma}\label{HsPoissEq}
{\it The solution of the Poisson equations
$$
\left\{\begin{array}{ll}
\Delta w=g &\mbox{in}~~\Omega,\\
w=0 &\mbox{on}~~\partial\Omega,
\end{array}\right.
\qquad\mbox{and}\qquad
\left\{\begin{array}{ll}
\Delta w=g &\mbox{in}~~\Omega,\\
\partial_nw=0 &\mbox{on}~~\partial\Omega,
\end{array}\right.
$$
satisfies that $($the Neumann problem requires 
$\int_\Omega g(x)\d x=\int_\Omega w(x)\d x=0$$)$
$$
\|w\|_{W^{1,p_s}}+\|w\|_{H^{1+s}}\leq C_s\|g\|_{L^2},
\quad \forall~s\in(1/2,\pi/\omega) ,
$$
where $p_s=2/(1-s)>4$ when $s\in(1/2,\pi/\omega)$.
}
\end{lemma}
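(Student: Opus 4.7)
The plan is to reduce the claim to the classical Grisvard--Dauge decomposition of a Poisson solution in a polygon into a regular part plus corner singularities, and then to verify by direct calculation that each singular profile lies in $H^{1+s}\cap W^{1,p_s}$ for the stated range of $s$. For the Neumann problem the compatibility condition $\int_\Omega g\,dx=0$ and the zero-mean normalisation of $w$ entering the statement are exactly what is needed to make this decomposition available with a constant depending only on the domain.

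First I would invoke the singularity expansion: with $g\in L^2$, the solution can be written as
\begin{equation*}
w \,=\, w_R \,+\, \sum_{j=1}^{m} c_j\, \eta(r_j)\, r_j^{\pi/\omega_j}\, \Psi_j(\theta_j),
\end{equation*}
where $(r_j,\theta_j)$ are polar coordinates at the reentrant corner $x_j$ of opening $\omega_j>\pi$, $\eta$ is a smooth radial cutoff supported near $x_j$, $\Psi_j$ equals $\sin(\pi\theta_j/\omega_j)$ for the Dirichlet problem and $\cos(\pi\theta_j/\omega_j)$ for the Neumann problem, $w_R\in H^2(\Omega)$, and $\|w_R\|_{H^2}+\sum_j |c_j|\le C\|g\|_{L^2}$. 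At convex corners no singularity is needed, so the sum is indexed only by the reentrant corners.

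For the regular part, two-dimensional Sobolev embedding gives $H^2\hookrightarrow W^{1,p}$ for every finite $p$ and $H^2\hookrightarrow H^{1+s}$ for every $s\in(0,1)$, so $\|w_R\|_{H^{1+s}}+\|w_R\|_{W^{1,p_s}}\le C\|w_R\|_{H^2}\le C\|g\|_{L^2}$ with no further work. For each singular profile $S_j:=\eta(r_j)\,r_j^{\pi/\omega_j}\,\Psi_j(\theta_j)$, a direct computation in polar coordinates yields $|\nabla S_j|\lesssim r_j^{\pi/\omega_j-1}$ near $x_j$, while $S_j$ is smooth away from $x_j$. Using the area element $r\,dr\,d\theta$, the $L^{p_s}$ bound on $\nabla S_j$ reduces to convergence of $\int_0^{r_0} r^{p_s(\pi/\omega_j-1)+1}\,dr$, which holds precisely when $p_s<2/(1-\pi/\omega_j)$. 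The restriction $s<\pi/\omega\le \pi/\omega_j$ gives $p_s=2/(1-s)<2/(1-\pi/\omega_j)$, so the condition is met. A parallel Slobodeckij-norm calculation on $r^{\pi/\omega_j}\Psi_j(\theta_j)$ shows $S_j\in H^{1+s}$ under the same threshold $s<\pi/\omega_j$, with constants depending on $s$ and on the cutoff $\eta$.

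The step I expect to be most delicate is reconciling the complex-interpolation definition of $H^{1+s}$ adopted in the excerpt with the direct fractional Sobolev computation used on the singular profiles. This identification of the two scales of spaces is standard on Lipschitz domains (cf.\ Triebel, or the dedicated treatment in Grisvard's monograph), and once it is in place the three bounds above combine to give the stated estimate; when $\Omega$ is convex the singular sum is empty and $w\in H^2$ covers the full range $s\in(0,1)$ trivially.
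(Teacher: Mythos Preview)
Your argument is correct and is essentially the standard route underlying the references the paper cites. However, the paper itself does not prove this lemma at all: it is stated as a cited result from Dauge and Grisvard, with no accompanying argument. So there is no ``paper's own proof'' to compare your approach against---you have supplied a proof sketch where the paper simply invokes the literature. Your decomposition into an $H^2$ regular part plus corner singularities, together with the direct verification that $r^{\pi/\omega_j}\Psi_j(\theta_j)$ lies in $H^{1+s}\cap W^{1,p_s}$ exactly when $s<\pi/\omega_j$, is precisely the content of those references specialised to the present setting, and your integrability check $p_s<2/(1-\pi/\omega_j)$ is correct.
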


Secondly, we introduce a lemma concerning 
the embedding of ${\bf H}_{\rm n}({\rm curl},{\rm div})$
into ${\bf H}^s$.

\begin{lemma}\label{LemCDReg} {\it 
${\bf H}_{\rm n}({\rm curl},{\rm div})
\hookrightarrow {\bf H}^s\hookrightarrow {\bf L}^{p_s}$ 
for any $s\in(1/2,\pi/\omega)$.
}
\end{lemma}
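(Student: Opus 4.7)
\textbf{Proof proposal for Lemma \ref{LemCDReg}.}
The plan is to realize any ${\bf a}\in{\bf H}_{\rm n}({\rm curl},{\rm div})$ as the Hodge decomposition ${\bf a}=\nabla\times u+\nabla v$, reduce the regularity question for ${\bf a}$ to the regularity of $u$ and $v$ as solutions of scalar Poisson problems, and then invoke Lemma \ref{HsPoissEq} together with the two-dimensional Sobolev embedding.

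Concretely, given ${\bf a}\in{\bf H}_{\rm n}({\rm curl},{\rm div})$, I would first let $u\in\mathring H^1$ be the unique solution of $-\Delta u=\nabla\times{\bf a}$ in $\Omega$ with $u=0$ on $\partial\Omega$, and let $v\in H^1$ (with $\int_\Omega v\,\d x=0$) solve $\Delta v=\nabla\cdot{\bf a}$ in $\Omega$ with $\partial_n v=0$ on $\partial\Omega$; the compatibility condition for the Neumann problem follows from $\int_\Omega\nabla\cdot{\bf a}\,\d x=\int_{\partial\Omega}{\bf a}\cdot{\bf n}\,\d s=0$. Setting ${\bf w}:={\bf a}-\nabla\times u-\nabla v$, a short computation shows $\nabla\times{\bf w}=0$, $\nabla\cdot{\bf w}=0$ in $\Omega$, and ${\bf w}\cdot{\bf n}=0$ on $\partial\Omega$ (using $u|_{\partial\Omega}=0$, which gives $(\nabla\times u)\cdot{\bf n}=\partial_\tau u=0$, together with $\partial_n v=0$). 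Assuming $\Omega$ is simply connected, which is the standing assumption for the nonconvex polygons considered here, the only such field is ${\bf w}=0$, so ${\bf a}=\nabla\times u+\nabla v$ exactly; if the domain has holes, ${\bf w}$ lies in the finite-dimensional space of smooth harmonic Neumann fields, which causes no loss of regularity.

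Once the decomposition is in hand, Lemma \ref{HsPoissEq} provides $\|u\|_{H^{1+s}}\le C\|\nabla\times{\bf a}\|_{L^2}$ and $\|v\|_{H^{1+s}}\le C\|\nabla\cdot{\bf a}\|_{L^2}$ for every $s\in(1/2,\pi/\omega)$. Therefore
\begin{equation*}
\|{\bf a}\|_{{\bf H}^s}\le\|\nabla\times u\|_{{\bf H}^s}+\|\nabla v\|_{{\bf H}^s}\le C\bigl(\|u\|_{H^{1+s}}+\|v\|_{H^{1+s}}\bigr)\le C\bigl(\|\nabla\times{\bf a}\|_{L^2}+\|\nabla\cdot{\bf a}\|_{L^2}\bigr),
\end{equation*}
which establishes the continuous embedding ${\bf H}_{\rm n}({\rm curl},{\rm div})\hookrightarrow{\bf H}^s$. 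The second embedding ${\bf H}^s\hookrightarrow{\bf L}^{p_s}$ with $p_s=2/(1-s)$ is the standard Sobolev embedding in two space dimensions, valid for any $s\in(0,1)$.

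The only real subtlety is step one, namely guaranteeing that the harmonic remainder ${\bf w}$ vanishes (or at worst lives in a smooth finite-dimensional space). This requires a topological hypothesis on $\Omega$ and a careful verification of the boundary traces of $\nabla\times u$ and $\nabla v$ at the reentrant corners; the traces must be interpreted in the appropriate dual-space sense since $u,v$ are only in $H^{1+s}$ rather than $H^2$. Everything else is a direct chain of continuous embeddings and elliptic estimates.
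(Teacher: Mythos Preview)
Your proposal is correct and follows essentially the same route as the paper: decompose ${\bf a}=\nabla\times u+\nabla v$ via the Dirichlet and Neumann Poisson problems, then apply Lemma \ref{HsPoissEq} to obtain $u,v\in H^{1+s}$, hence ${\bf a}\in{\bf H}^s$, with the final embedding ${\bf H}^s\hookrightarrow{\bf L}^{p_s}$ being standard Sobolev. The only difference is that the paper simply cites the Hodge decomposition from \cite{Chen97}, whereas you construct it explicitly and verify that the harmonic remainder vanishes; this extra detail is sound and makes your argument more self-contained.
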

\begin{proof}$~$ 
From \cite{Chen97} we know that 
${\bf A}$ has the decomposition
${\bf A}=\nabla\times u+\nabla v$, where 
$u$ and $v$ are the solutions of 
$$
\left\{\begin{array}{ll}
-\Delta u=\nabla\times{\bf A} &\mbox{in}~~\Omega,\\
u=0 &\mbox{on}~~\partial\Omega,
\end{array}\right.
\qquad\mbox{and}\qquad
\left\{\begin{array}{ll}
\Delta v=\nabla\cdot{\bf A} &\mbox{in}~~\Omega,\\
\partial_nv=0 &\mbox{on}~~\partial\Omega,
\end{array}\right.
$$
respectively, with $\int_\Omega v(x)\d x=0$.
For the two Poisson's equations, Lemma \ref{HsPoissEq}
implies that
\begin{align*}
&\|u\|_{H^{1+s}}+\|v\|_{H^{1+s}}  
\leq C_s(\|\nabla\times{\bf A}\|_{L^2}+\|\nabla\cdot{\bf A}\|_{L^2}),
\quad\,\forall~s\in(1/2,\pi/\omega) .
\end{align*}
\end{proof} 

Thirdly, we introduce a lemma concerning the embedding 
of discrete Sobolev spaces.

\begin{lemma}\label{LemdisEmb} {\it 
Let $\theta_h\in \mathring V_h^1$, $\vartheta_h\in V_h^1$ 
with $\int_\Omega \vartheta_h(x)\d x=0$. If we define 
$\Delta_h\theta_h\in\mathring V_h^1$ 
and $\Delta_h\vartheta_h\in V_h^1$ by
\begin{align*}
&(\Delta_h\theta_h,\varphi)=-(\nabla \theta_h,\nabla\varphi),
\quad\forall\varphi\in \mathring V_h^1,\\
&(\Delta_h\vartheta_h,\varphi)=-(\nabla\vartheta_h,\nabla\varphi),
\quad\forall\varphi\in V_h^1,
\end{align*}
then 
\begin{align*}
&\|\nabla \theta_h\|_{L^4}\leq C\|\Delta_h\theta_h\|_{L^2},
\qquad\mbox{and}\qquad
\|\nabla \vartheta_h\|_{L^4}\leq C\|\Delta_h\vartheta_h\|_{L^2} .
\end{align*}
}
\end{lemma}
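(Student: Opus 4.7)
The plan is to realize $\theta_h$ as the Galerkin projection of the solution of a continuous Poisson problem and then combine the $W^{1,p_s}$ regularity supplied by Lemma~\ref{HsPoissEq} with a standard inverse/interpolation argument to lift the $L^2$ bound on $\Delta_h\theta_h$ to an $L^4$ bound on $\nabla\theta_h$. The Dirichlet case for $\theta_h\in\mathring V_h^1$ and the Neumann case for $\vartheta_h\in V_h^1$ are handled identically using the two problems in Lemma~\ref{HsPoissEq}; in the Neumann case the hypothesis $\int_\Omega\vartheta_h\d x=0$ together with the identity $(\Delta_h\vartheta_h,1)=-(\nabla\vartheta_h,\nabla 1)=0$ secures the compatibility condition for the continuous Neumann problem.

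Concretely, I would set $g_h:=\Delta_h\theta_h\in \mathring V_h^1\subset L^2$ and let $w\in \mathring H^1$ be the weak solution of $\Delta w=g_h$ in $\Omega$, $w=0$ on $\partial\Omega$. Comparing definitions, $\theta_h$ is precisely the Ritz projection $R_h w$ of $w$ onto $\mathring V_h^1$, since both satisfy $(\nabla R_h w,\nabla\varphi)=(\nabla w,\nabla\varphi)=-(g_h,\varphi)$ for all $\varphi\in\mathring V_h^1$. Lemma~\ref{HsPoissEq} then delivers $\|w\|_{W^{1,p_s}}+\|w\|_{H^{1+s}}\leq C\|g_h\|_{L^2}$ for some $s\in(1/2,\pi/\omega)$ with $p_s>4$. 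Splitting
\[
\nabla\theta_h=\nabla I_h w+\nabla(R_h w-I_h w),
\]
the first piece is controlled by the $W^{1,4}$ stability of the Lagrange interpolant on a quasi-uniform mesh (legitimate because $p_s>2$ in two dimensions forces $w\in C^0$), giving $\|\nabla I_h w\|_{L^4}\leq C\|w\|_{W^{1,4}}\leq C\|g_h\|_{L^2}$; the second piece is handled by the inverse estimate $\|\nabla\phi_h\|_{L^4}\leq Ch^{-1/2}\|\nabla\phi_h\|_{L^2}$ for $\phi_h\in V_h^1$ together with the classical fractional error bound $\|\nabla(R_h w-I_h w)\|_{L^2}\leq Ch^s\|w\|_{H^{1+s}}$, producing
\[
\|\nabla(R_h w-I_h w)\|_{L^4}\leq Ch^{s-1/2}\|w\|_{H^{1+s}}\leq C\|g_h\|_{L^2}.
\]

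The delicate point, and the reason this lemma is not purely a formality, is that the inverse estimate costs a factor $h^{-1/2}$ while the $H^{1+s}$ approximation theory only buys back a factor $h^s$, so a mesh-independent bound requires \emph{strict} inequality $s>1/2$, which is in turn compatible with the geometric constraint $s<\pi/\omega$ only because $\omega<2\pi$. This is precisely the regularity produced by Lemma~\ref{HsPoissEq} and no more; any argument that tried to run with $s=1/2$ would fail. All the other ingredients — $W^{1,p}$-stability of Lagrange interpolation for $p>2$, the inverse estimate, and the fractional approximation estimate for $P^1$ elements — are classical consequences of quasi-uniformity, so beyond this exponent bookkeeping no further obstacle is expected.
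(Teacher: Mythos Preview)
Your proposal is correct and follows essentially the same route as the paper: identify $\theta_h$ as the Ritz projection of the continuous Poisson solution $w$ with right-hand side $\Delta_h\theta_h$, split $\nabla\theta_h=\nabla I_h w+\nabla(\theta_h-I_h w)$, bound the first piece via $\|w\|_{W^{1,p_s}}$ and the second via the inverse inequality combined with the $O(h^s)$ approximation estimate, using $s>1/2$ to absorb the $h^{-1/2}$ loss. Your remark on the Neumann compatibility condition is also the correct way to handle $\vartheta_h$.
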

\begin{proof}$~$
Let $\theta$ be the solution of the Poisson's equation
\begin{align*}
&\Delta\theta=\Delta_h\theta_h
\end{align*}
with the Dirichlet boundary condition 
$\theta=0$ on $\partial\Omega$.
Then $(\nabla(\theta-\theta_h),\nabla\xi_h)=0$ 
for any $\xi_h\in \mathring V_h^1$,
which implies that, via the standard $H^1$-norm error estimate
and Lemma \ref{HsPoissEq},
\begin{align*}
\|\nabla(\theta-\theta_h)\|_{L^2}\leq C\|\theta\|_{H^{1+s}}h^s 
\leq C\|\Delta_h\theta_h\|_{L^2}h^s .
\end{align*}
Since $s>1/2$, by applying the inverse inequality we obtain that
\begin{align*}
\|\nabla(I_h\theta-\theta_h)\|_{L^4}
\leq Ch^{-1/2}\|\nabla(I_h\theta-\theta_h)\|_{L^2} 
\leq C\|\Delta_h\theta_h\|_{L^2}h^{s-1/2} 
\leq C\|\Delta_h\theta_h\|_{L^2}.
\end{align*}
Thus $\|\nabla \theta_h \|_{L^4}
\leq \|\nabla(I_h\theta-\theta_h)\|_{L^4}
+\|\nabla I_h \theta \|_{L^4}\leq C\|\Delta_h\theta_h\|_{L^2} $.
The proof for $\vartheta_h$ is similar.
\end{proof}\medskip

\subsection{Existence of weak solutions for \refe{PDE1}-\refe{init}}
In this subsection, we prove existence of weak solutions for 
the system \refe{PDE1}-\refe{init} by constructing approximating
solutions in finite dimensional spaces and then applying a 
compactness argument. Firstly, we need the following lemma
to control the order parameter pointwisely.
\begin{lemma}\label{UnBDPsi}
{\it For any given ${\bf A}\in 
L^\infty((0,T);{\bf H}_{\rm n}({\rm curl},{\rm div}))$,  
the equation {\rm\refe{PDE1}}
has at most one weak solution 
$\psi\in L^\infty((0,T);H^1)\cap H^1((0,T);L^2)$ in the 
sense of {\rm\refe{VPDE1}}.
If the solution exists then it 
satisfies that $|\psi|\leq 1$ a.e. in $\Omega\times(0,T)$.
}
\end{lemma}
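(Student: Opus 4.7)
The lemma has two parts: the pointwise bound $|\psi|\le 1$ and uniqueness of weak solutions in the stated class. I would first establish the bound and then use it (to make the cubic term globally Lipschitz) to derive uniqueness.

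For the pointwise bound I would multiply \refe{PDE1} by $\psi^{\ast}$ and take the real part. Two simplifications occur: the gauge term $i\kappa^{-1}(\nabla\cdot{\bf A})\psi$ inside $(i\kappa^{-1}\nabla+{\bf A})^{2}\psi$ and the explicit Lorentz term $-i\eta\kappa\psi\nabla\cdot{\bf A}$ are both purely imaginary when paired with $\psi^{\ast}$ and drop out, while the covariant Laplacian rewrites as $-\tfrac{1}{2\kappa^{2}}\Delta|\psi|^{2}+|D_{{\bf A}}\psi|^{2}$ with $D_{{\bf A}}:=i\kappa^{-1}\nabla+{\bf A}$. This yields the distributional identity
\[
\tfrac{\eta}{2}\partial_{t}|\psi|^{2}-\tfrac{1}{2\kappa^{2}}\Delta|\psi|^{2}+|D_{{\bf A}}\psi|^{2}+(|\psi|^{2}-1)|\psi|^{2}=0.
\]
Testing against $\phi:=(|\psi|^{2}-1)_{+}$ and using the Neumann condition $\nabla\psi\cdot{\bf n}=0$ (whence $\nabla|\psi|^{2}\cdot{\bf n}=0$) to discard the boundary term, I would obtain
\[
\tfrac{\eta}{4}\frac{d}{dt}\|\phi\|_{L^{2}}^{2}+\tfrac{1}{2\kappa^{2}}\|\nabla\phi\|_{L^{2}}^{2}+\int_{\Omega}\phi\,|D_{{\bf A}}\psi|^{2}\,\d x+\int_{\Omega}\phi(|\psi|^{2}-1)|\psi|^{2}\,\d x=0.
\]
All four left-hand terms are nonnegative and $\phi(\cdot,0)\equiv 0$ by $|\psi_{0}|\le 1$, so $\phi\equiv 0$, i.e.\ $|\psi|\le 1$ a.e.

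For uniqueness, given two weak solutions $\psi_{1},\psi_{2}$ sharing the same data and the same ${\bf A}$, set $e:=\psi_{1}-\psi_{2}$, subtract the two weak equations, test with $e^{\ast}$, and take real parts. The boundary terms from the integration by parts of the covariant Laplacian vanish because $\nabla\psi_{j}\cdot{\bf n}={\bf A}\cdot{\bf n}=0$, leading to
\[
\tfrac{\eta}{2}\frac{d}{dt}\|e\|_{L^{2}}^{2}+\|D_{{\bf A}}e\|_{L^{2}}^{2}+{\rm Re}\bigl(|\psi_{1}|^{2}\psi_{1}-|\psi_{2}|^{2}\psi_{2},\,e\bigr)=\|e\|_{L^{2}}^{2}.
\]
Since $|\psi_{j}|\le 1$ from the first step, the elementary Lipschitz estimate $\bigl||\psi_{1}|^{2}\psi_{1}-|\psi_{2}|^{2}\psi_{2}\bigr|\le 3|e|$ controls the cubic term by $3\|e\|_{L^{2}}^{2}$, and Gronwall with $e(\cdot,0)=0$ forces $e\equiv 0$.

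The main obstacle is the rigorous justification of the Stampacchia truncation and chain rule in the low-regularity setting. Because ${\bf A}$ lies only in ${\bf H}_{\rm n}({\rm curl},{\rm div})\hookrightarrow{\bf L}^{p_{s}}$ with $p_{s}>4$ (Lemma \ref{LemCDReg}) and not in ${\bf L}^{\infty}$, the products $|{\bf A}|^{2}|\psi|^{2}\phi$ and $({\bf A}\cdot{\rm Im}(\psi^{\ast}\nabla\psi))\phi$ hidden inside $|D_{{\bf A}}\psi|^{2}$ must be controlled by H\"older combined with the 2D embedding $H^{1}\hookrightarrow L^{q}$ for every $q<\infty$ applied to $\psi$ and $\phi$. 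Moreover, the chain rule $\phi\,\partial_{t}|\psi|^{2}=\tfrac{1}{2}\partial_{t}\phi^{2}$ and the derivation of the identity itself require a standard time mollification of $\psi$, since $\partial_{t}\psi$ only lies in $L^{2}L^{2}$; after the mollification step the limit passage in every nonlinear term is routine.
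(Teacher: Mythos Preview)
Your proof is correct and follows essentially the same route as the paper. For the bound $|\psi|\le 1$, the paper tests \refe{VPDE1} directly with $\varphi=\psi(|\psi|^2-1)_+$ and expands $(D_{\bf A}\psi,\,D_{\bf A}\varphi)$; your approach of first writing the scalar identity for $|\psi|^2$ and then testing with $(|\psi|^2-1)_+$ is the same computation reorganized, and the sign term you obtain, $(2\kappa^2)^{-1}\|\nabla\phi\|_{L^2}^2$, is exactly the paper's term $-\kappa^{-2}{\rm Re}\int_{\{|\psi|^2>1\}}(|\psi|^2|\nabla\psi|^2+(\psi^*)^2\nabla\psi\cdot\nabla\psi)\,\d x$ rewritten. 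The only difference worth noting is the logical order: the paper disposes of uniqueness in one line, relying solely on $\psi\in L^\infty((0,T);{\mathcal H}^1)$ and the 2D embedding $H^1\hookrightarrow L^q$ for all $q<\infty$ to control the cubic term, whereas you first prove $|\psi|\le 1$ and then exploit it to make $|\psi|^2\psi$ globally Lipschitz. Both orderings are valid; yours gives a slightly cleaner Gronwall estimate at the cost of needing the Stampacchia step first, and you correctly flag the mollification needed to justify the chain rule for $\partial_t(|\psi|^2-1)_+^2$ at the available time regularity.
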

\begin{proof}$~$
From Lemma \ref{LemCDReg} we see that
${\bf H}_{\rm n}({\rm curl},{\rm div})\hookrightarrow {\bf H}^s 
\hookrightarrow {\bf L}^4 $ and so
${\bf A}\in L^\infty((0,T);{\bf L}^4)$.
Uniqueness of the solution can be proved easily based on
the regularity assumption of $\psi$. 
To prove $|\psi|\leq 1$ a.e. in $\Omega\times(0,T)$,
we integrate \refe{PDE1} against $\psi^*(|\psi|^2-1)_+$ and
consider the real part,
where $(|\psi|^2-1)_+$ denotes
the positive part of $|\psi|^2-1$. For any $t'\in(0,T)$ we have
\begin{align*}
& \int_\Omega
\bigg(\frac{\eta}{4}\big(|\psi(x,t')|^2-1\big)_+ ^2\bigg)\d x
 + \int_0^{t'}\int_\Omega (|\psi|^{2}-1)^2_+ |\psi| ^2\d x\d t\\
&=-\int_0^{t'}{\rm Re}\int_\Omega 
\bigg(\frac{i}{\kappa} \nabla  \psi+ \mathbf{A} \psi\bigg)
\bigg(-\frac{i}{\kappa} \nabla 
+ \mathbf{A}\bigg)[\psi^* (|\psi|^2-1)_+]\d x\d t\\
&=-\int_0^{t'}\int_\Omega \bigg|\frac{i}{\kappa} 
\nabla  \psi+ \mathbf{A} \psi\bigg|^2
 (|\psi|^2-1)_+ \d x \d t\\
&\quad + \int_0^{t'}{\rm Re}\int_{\{|\psi|^2>1\}} 
\bigg(\frac{i}{\kappa} \nabla  \psi
+ \mathbf{A} \psi\bigg)\psi^*\bigg(\frac{i}{\kappa}  
\psi\nabla\psi^*+\frac{i}{\kappa}\psi^*\nabla\psi \bigg)\d x\d t\\
&=-\int_0^{t'}\int_\Omega \bigg|\frac{i}{\kappa} 
\nabla  \psi+ \mathbf{A} \psi\bigg|^2
 (|\psi|^2-1)_+ \d x\d t\\
&\quad -\int_0^{t'}{\rm Re}\int_{\{|\psi|^2>1\}}(|\psi|^2|\nabla\psi|^2
+ (\psi^*)^2\nabla\psi\cdot \nabla\psi )\d x\d t\\
& \leq 0,
\end{align*}
which implies that $\int_\Omega(|\psi(x,t')|^2-1)_+ ^2 \d x
=0$. Thus $|\psi|\leq 1$ a.e. in $\Omega\times(0,T)$. 
\end{proof}\medskip

Secondly, we construct approximating solutions in 
finite dimensional spaces. For this purpose, 
we let $\phi_1,\phi_2,\cdots$ be the 
eigenfunctions of the Neumann Laplacian,
which form a basis of ${\mathcal  H}^1$. 
Let $M:{\bf H}_{\rm n}({\rm curl},{\rm div})
\rightarrow ({\bf H}_{\rm n}({\rm curl},{\rm div}))'$ be defined by
$$
(M{\bf u},{\bf v})=(\nabla\times {\bf u},\nabla\times {\bf v})
+(\nabla\cdot{\bf u},\nabla\cdot{\bf v}),
\quad\mbox{for}~ {\bf u},{\bf v}
\in {\bf H}_{\rm n}({\rm curl},{\rm div}) \, .
$$
Since the bilinear form on the right-hand side 
is coercive on the space ${\bf H}_{\rm n}({\rm curl},{\rm div})$, 
which is compactly embedded into ${\bf L}^2$, 
the spectrum of $M$ consists of a sequence of 
eigenvalues which tend to infinity, and the 
corresponding eigenvectors ${\bf a}_1,{\bf a}_2,{\bf a}_3,\cdots$ 
form a basis of ${\bf H}_{\rm n}({\rm curl},{\rm div})$ 
\cite{CD99,McLean00}.

We define ${\mathcal V}_N
={\rm span}\{\phi_1,\phi_2,\cdots,\phi_N\}$
and ${\bf X}_N={\rm span}\{{\bf a}_1,{\bf a}_2,\cdots,{\bf a}_N\}$,
which are finite dimensional subspaces of 
${\mathcal  H}^1$ and ${\bf H}_{\rm n}({\rm curl},{\rm div})$,
respectively,
and we look for $\Psi_N(t)\in {\mathcal V}_N$,
${\bf \Lambda}_N(t)\in {\bf X}_N$
such that
\begin{align}
&\bigg(\eta\frac{\partial \Psi_N}{\partial t} ,\varphi\bigg)
+ \bigg(\bigg(\frac{i}{\kappa} \nabla +{\bf \Lambda}_N\bigg)  \Psi_N,
\bigg(\frac{i}{\kappa} \nabla + {\bf \Lambda}_N\bigg)\varphi\bigg)\nn\\
&\qquad\quad +\bigg( (|\Psi_N|^{2}-1) \Psi_N-i\eta\kappa
\chi(\Psi_N)\nabla\cdot{\bf\Lambda}_N,\varphi\bigg) = 0,
\label{DVPDE1}\\[8pt]
&\bigg(\frac{\partial {\bf \Lambda}_N}{\partial t} ,{\bf a}\bigg)
+ \big(\nabla\times{\bf \Lambda}_N,\nabla\times{\bf a}\big)
+\big(\nabla\cdot{\bf \Lambda}_N,\nabla\cdot{\bf a}\big)  \nn\\
&\qquad\quad 
+ \bigg( {\rm Re}\bigg[\Psi_N^*\bigg(\frac{i}{\kappa} \nabla 
+{\bf \Lambda}_N\bigg) \Psi_N\bigg],{\bf a}\bigg) =
 \big(f , \nabla\times {\bf a}\big),
\label{DVPDE2}
\end{align}
for any $\varphi\in {\mathcal V}_N$ and ${\bf a}\in {\bf X}_N$ 
at any $t\in (0,T)$,
with the initial conditions $\Psi(0) = \Pi_N\psi_{0}$ and  
${\bf\Lambda}(0) =\widetilde\Pi_N{\bf A}_{0}$, where 
$\Pi_N$ and $\widetilde\Pi_N$ are the 
projections of ${\mathcal  H}^1$ and 
${\bf H}_{\rm n}({\rm curl},{\rm div})$ onto the subspaces 
${\mathcal V}_N$ and ${\bf X}_N$,  respectively.

Existence and uniqueness of solutions for the 
ODE problem \refe{DVPDE1}-\refe{DVPDE2} are obvious. 
To present estimates of the semi-discrete solution 
$(\Psi_N,{\bf \Lambda}_N)$, 
we substitute $\varphi=\partial_t\Psi$ and 
${\bf a}=\partial_t{\bf\Lambda}$ into the equations,
and sum up the two results. Then 
we obtain that
\begin{align*}
&\frac{\d}{\d t}\int_\Omega\frac{1}{2}\bigg(
\bigg|\frac{i}{\kappa}\nabla\Psi_N+{\bf \Lambda}_N\Psi_N\bigg|^2
+\frac{1}{2}(|\Psi_N|^2-1)^2
+|\nabla\times{\bf \Lambda}_N-f|^2
+|\nabla\cdot{\bf \Lambda}_N|^2 \bigg)\d x\\
&\quad +\int_\Omega\bigg(\bigg|
\frac{\partial {\bf \Lambda}_N}{\partial t}\bigg|^2
+\eta\bigg|\frac{\partial \Psi_N}{\partial t}\bigg|^2\bigg)\d x \\
&=\eta\kappa\int_\Omega {\rm Im}\bigg(\chi(\Psi_N)
\frac{\partial \Psi_N^*}{\partial t}\bigg)
\nabla\cdot{\bf \Lambda}_N \, \d x \\
&\leq \frac{1}{2}\int_\Omega 
\eta\bigg|\frac{\partial \Psi_N}{\partial t}\bigg|^2 \d x 
+\frac{1}{2}\int_\Omega 
\eta\kappa^2|\nabla\cdot{\bf \Lambda}_N|^2 \d x .
\end{align*}
By applying Gronwall's inequality, we obtain that
\begin{align*}
&\|\Psi_N\|_{L^\infty((0,T);{\mathcal  H}^1)}
+\|\partial_t\Psi_N\|_{L^2((0,T);{\mathcal  L}^2)}
+\|{\bf\Lambda}_N\|_{L^\infty((0,T);{\bf H}_{\rm n}({\rm curl},{\rm div}))}
+\|\partial_t{\bf\Lambda}_N\|_{L^2((0,T);L^2)}
\leq C,
\end{align*}
where the constant $C$ does not depend on $N$.

Thirdly, since $H^1\hookrightarrow
\hookrightarrow L^p$ for any $1<p<\infty$
and ${\bf H}_{\bf n}({\rm curl},{\rm div})
\hookrightarrow\hookrightarrow 
L^{4+\varepsilon}$ for some $\varepsilon>0$, 
by the Aubin--Lions compactness 
argument \cite{Lions69}, there exist 
\begin{align}
& \psi\in L^\infty((0,T);{\mathcal  H}^1)
\cap H^1((0,T);{\mathcal  L}^2),
\label{regpsiprf}\\
&{\bf A}\in L^\infty((0,T);{\bf H}_{\rm n}({\rm curl},{\rm div}))
\cap H^1((0,T);{\bf L}^2),
\end{align}
and a subsequence of $(\Psi_N,{\bf\Lambda}_N)_{N=1}^\infty$,
denoted by $(\Psi_{N_m},{\bf\Lambda}_{N_m})_{m=1}^\infty$, 
such that 
\begin{align*}
& \Psi_{N_m} \rightharpoonup
\psi\quad\mbox{weakly$^*$ in}~~ L^\infty((0,T);{\mathcal  H}^1) ,\\
& \Psi_{N_m} \rightharpoonup
\psi\quad\mbox{weakly in}~~ L^p((0,T);{\mathcal  H}^1) ~~
\mbox{for any $1<p<\infty$} ,\\
&\partial_t\Psi_{N_m} \rightharpoonup
\partial_t\psi\quad\mbox{weakly in}~~ L^2((0,T);{\mathcal  L}^2) ,\\
& \Psi_{N_m} \rightarrow
\psi\quad \mbox{strongly in}~~ L^p((0,T);{\mathcal  L}^p)~~
\mbox{for any $1<p<\infty$} ,\\
& {\bf\Lambda}_{N_m} \rightharpoonup
{\bf A}\quad\mbox{weakly$^*$ in}~~ 
L^\infty((0,T);{\bf H}_{\rm n}({\rm curl},{\rm div})) ,\\
& {\bf\Lambda}_{N_m} \rightharpoonup
{\bf A}\quad\mbox{weakly in}~~ 
L^p((0,T);{\bf H}_{\rm n}({\rm curl},{\rm div})) ~~
\mbox{for any $1<p<\infty$} ,\\
&\partial_t{\bf\Lambda}_{N_m} \rightharpoonup
\partial_t{\bf A}\quad\mbox{weakly in}~~ L^2((0,T);{\bf L}^2) ,\\
& {\bf\Lambda}_{N_m} \rightarrow
{\bf A}\quad \mbox{strongly in}~~ L^p((0,T);{\bf L}^{4+\varepsilon})
~~\mbox{for any $1<p<\infty$} ,
\end{align*}
which further imply that
\begin{align*}
&\Psi_{N_m}{\bf\Lambda}_{N_m} \rightarrow
\psi{\bf A}\quad\mbox{strongly in}~~ 
L^2((0,T);{\mathcal  L}^2\times {\mathcal  L}^2) ,\\
&\nabla\Psi_{N_m}\cdot{\bf\Lambda}_{N_m} \rightharpoonup
\nabla\psi\cdot{\bf A}\quad
\mbox{weakly in}~~ L^2((0,T);{\mathcal  L}^{4/3}) ,\\
&\Psi_{N_m}|{\bf\Lambda}_{N_m}|^2 \rightarrow
\psi|{\bf A}|^2\quad\mbox{strongly in}~~ 
L^2((0,T);{\mathcal  L}^{4/3}) ,\\
&\bigg(\frac{i}{\kappa} \nabla 
+{\bf \Lambda}_N\bigg) \Psi_N
 \rightharpoonup
\bigg(\frac{i}{\kappa} \nabla 
+{\bf A}\bigg) \psi\quad\mbox{weakly in}~~ 
L^2((0,T);{\mathcal  L}^2) ,\\
&\Psi_N^*\bigg(\frac{i}{\kappa} \nabla 
+{\bf \Lambda}_N\bigg) \Psi_N
 \rightharpoonup
\psi^*\bigg(\frac{i}{\kappa} \nabla 
+{\bf A}\bigg) \psi\quad\mbox{weakly in}~~ 
L^2((0,T);{\mathcal  L}^{4/3}\times {\mathcal  L}^{4/3}) ,\\
&\bigg(\frac{i}{\kappa} \nabla 
+{\bf \Lambda}_N\bigg) \Psi_N\cdot{\bf\Lambda}_N
 \rightharpoonup
\bigg(\frac{i}{\kappa} \nabla 
+{\bf A}\bigg) \psi\cdot{\bf A}
\quad\mbox{weakly in}~~ 
L^2((0,T);{\mathcal  L}^{4/3}) .
\end{align*}
For any given 
$\varphi\in L^2((0,T);{\mathcal V}_N)
\hookrightarrow L^2((0,T);{\mathcal  L}^4)$
and ${\bf a}\in L^2((0,T);{\bf X}_N)
\hookrightarrow L^2((0,T);{\bf L}^4)$,
integrating \refe{DVPDE1}-\refe{DVPDE2}
with respect to time and 
letting $N=N_m\rightarrow \infty$, 
we derive \refe{VPDE1}-\refe{VPDE2}.
In other words,
$\psi\in L^\infty((0,T);{\mathcal H}^1)\cap 
H^1((0,T);{\mathcal L}^2)$ is a weak
solution of \refe{PDE1} in the sense of \refe{VPDE1},
and ${\bf A}\in L^\infty((0,T);{\bf H}_{\rm n}({\rm curl},{\rm div}))
\cap H^1((0,T);{\bf L}^2) $ is a weak solution of \refe{PDE2}
in the sense of \refe{VPDE2}. The conditions of 
Lemma \ref{UnBDPsi} are satisfied, which implies that
$|\psi|\leq 1$ a.e. in $\Omega\times(0,T)$.

Finally, we prove the additional regularity of the solution
specified in Definition \ref{DefWSol}.
From Lemma \ref{LemCDReg} we see that
$${\bf A}\in L^\infty((0,T);{\bf H}_{\rm n}({\rm curl},{\rm div}))
\hookrightarrow L^\infty((0,T);{\bf H}^s)
\hookrightarrow L^\infty((0,T);{\bf L}^4) $$
for any $s\in(1/2,\pi/\omega)$. From \refe{PDE1} we see that 
\begin{align*}
&\frac{1}{\kappa^2}\Delta\psi
=\eta\partial_t\psi
+\frac{i}{\kappa}\nabla\cdot({\bf A}\psi)
+\frac{i}{\kappa}{\bf A}\cdot\nabla\psi+|{\bf A}|^2\psi
+(|\psi|^2-1)\psi-i\eta\kappa\psi\nabla\cdot{\bf A}
 \quad\mbox{a.e. in}~~\Omega,
\end{align*}
which imply that  
\begin{align*}
\|\Delta\psi\|_{{\mathcal  L}^2}
%&\leq \eta\|\partial_t\psi\|_{{\mathcal  L}^2}
%+C\|\psi\nabla\cdot {\bf A}  \|_{{\mathcal  L}^2}
%+C\|{\bf A}\cdot\nabla\psi\|_{{\mathcal  L}^2}
%+\||{\bf A}|^2\psi\|_{{\mathcal  L}^2}
%+\|(|\psi|^2-1)\psi \|_{{\mathcal  L}^2} \\
&\leq C\|\partial_t\psi\|_{{\mathcal  L}^2}
+C\|\nabla\cdot{\bf A}\|_{L^2}\|\psi\|_{{\mathcal  L}^\infty}
+C\|{\bf A}\|_{L^4}\|\nabla\psi\|_{{\mathcal  L}^4}
+C\|{\bf A}\|_{L^4}^2+C\|(|\psi|^2-1)\psi \|_{{\mathcal  L}^2} \\
&\leq C+C\|\partial_t\psi\|_{L^2}
+C\|\nabla\psi\|_{{\mathcal  L}^4}\\
&\leq C+C\|\partial_t\psi\|_{L^2}
+C\|\nabla\psi\|_{{\mathcal  L}^2}^{(1-4/p_s)/(2-4/p_s)}
\|\nabla\psi\|_{{\mathcal  L}^{p_s}}^{1/(2-4/p_s)}\\
&\leq C+\eta\|\partial_t\psi\|_{{\mathcal  L}^2}
+C\|\nabla\psi\|_{{\mathcal  L}^{p_s}}^{1/(2-4/p_s)} \\
&\leq C+C\|\partial_t\psi\|_{{\mathcal  L}^2}
+C\|\Delta\psi\|_{{\mathcal  L}^2}^{1/(2-4/p_s)},
\end{align*}
where we have used \refe{regpsiprf} 
and Lemma \ref{HsPoissEq}  in the last inequality.
Since $1/(2-4/p_s)<1$, 
the last inequality implies   
$\|\Delta\psi\|_{{\mathcal  L}^2}
 \leq C+C\|\partial_t\psi\|_{{\mathcal  L}^2}$ , and so
$$
\|\Delta \psi\|_{L^2((0,T);{\mathcal  L}^2)}\leq   
C+C\|\partial_t\psi\|_{L^2((0,T);{\mathcal  L}^2)}\leq C ,
$$
which further implies $\psi\in L^2((0,T);{\mathcal H}^{1+s})$
by Lemma \ref{HsPoissEq}.
From  \refe{PDE2} we see that
\begin{align*}
&\|\nabla\times(\nabla\times{\bf A})
-\nabla(\nabla\cdot{\bf A})\|_{L^2((0,T);{\bf L}^2)}\\
&\leq C\|\partial_t{\bf A}\|_{L^2((0,T);{\bf L}^2)}
+C\|\psi^*(i\kappa^{-1} \nabla \psi + \mathbf{A} \psi)\|_{L^2((0,T);{\bf L}^2)}
+C\|\nabla\times f\|_{L^2((0,T);{\bf L}^2)}\\
&\leq C\|\partial_t{\bf A}\|_{L^2((0,T);{\bf L}^2)}
+C\| \nabla \psi\|_{L^2((0,T);{\mathcal  L}^2)} 
+C\| \mathbf{A} \|_{L^2((0,T);{\bf L}^2)}
+C\|\nabla\times f\|_{L^2((0,T);{\bf L}^2)}\\
&\leq C .
\end{align*}
Note that $w=\nabla\times{\bf A}-f$ satisfies the equation 
\begin{align*}
&-\Delta w=\nabla\times(\nabla\times w)
= \nabla\times {\bf f} ,
\end{align*}
with $w=0$ on $\partial\Omega$ and
${\bf f}=\nabla\times(\nabla\times{\bf A})
-\nabla(\nabla\cdot{\bf A})-\nabla\times f\in L^2((0,T);{\bf L}^2)$.
The energy estimate of $w$ gives
$$\|w\|_{L^2((0,T);H^1)}\leq C\|{\bf f}\|_{L^2((0,T);{\bf L}^2)}\leq C .$$
Thus $\nabla(\nabla\cdot{\bf A})=\nabla\times w
-{\bf f}\in L^2((0,T);{\bf L}^2)$, which indicates that 
$\nabla\cdot{\bf A}\in L^2((0,T);H^1)$.

Existence of a weak solution of 
\refe{PDE1}-\refe{init} 
in the sense of Definition \ref{DefWSol} has been proved.\medskip

\subsection{Uniqueness of the weak solution}\label{SecUnique}

Suppose that there are two solutions 
$(\psi,{\bf A})$ and $(\Psi,{\bf\Lambda})$ for 
the system \refe{PDE1}-\refe{init} 
in the sense of Definition \ref{DefWSol}.
Let $e=\psi-\Psi$ and ${\bf E}={\bf A}-{\bf\Lambda}$.
Then we have
\begin{align}
&\int_0^T\Big[\big(\eta\partial_t e  ,\varphi\big)
+ \frac{1}{\kappa^2}\big(\nabla  e, \nabla\varphi\big) 
+ \big(|{\bf A}|^2   e,  \varphi\big) \Big]\d t\nn\\
& =\int_0^T\Big[-\frac{i}{\kappa}\big({\bf A}\cdot\nabla e ,\varphi\big)
-\frac{i}{\kappa}\big({\bf E}\cdot\nabla \Psi ,\varphi\big)
+\frac{i}{\kappa}\big(e {\bf A},\nabla\varphi\big)
+\frac{i}{\kappa}\big(\Psi {\bf E},\nabla\varphi\big)  \nn\\
&\quad - \big((|{\bf A}|^2 -|{\bf \Lambda}|^2)  \Psi,  \varphi\big)
 -\big( (|\psi|^{2}-1) \psi-(|\Psi|^{2}-1) \Psi,\varphi\big)\Big]\d t\nn\\
&\quad -\int_0^T\big(i\eta\kappa\psi\nabla\cdot{\bf E}
+i\eta\kappa e\nabla\cdot{\bf \Lambda},\varphi\big)\d t  ,
\label{UErEq1} \\[8pt]
&\int_0^T\Big[\big(\partial_t{\bf E} ,{\bf a}\big)
+ \big(\nabla\times{\bf E},\nabla\times{\bf a}\big)
+\big(\nabla\cdot{\bf E},\nabla\cdot{\bf a}\big) \Big]\d t\nn\\
& =-\int_0^T{\rm Re} \bigg( 
\frac{i}{\kappa}( \psi^*\nabla  \psi- \Psi^*\nabla  \Psi)
+  {\bf A}(|\psi|^2-|\Psi|^2)+|\Psi|^2 {\bf E}\, ,\, {\bf a}\bigg) \d t ,
\label{UErEq2}
\end{align}
for any $\varphi\in L^2((0,T);{\mathcal H}^1)$ and 
${\bf a}\in L^2((0,T);{\bf H}_{\rm n}({\rm curl},{\rm div}))$.
By choosing $\varphi(x,t)=e(x,t)1_{(0,t')}(t)$ 
and ${\bf a}(x,t)={\bf E}(x,t)1_{(0,t')}(t)$, and
using the regularity estimate
$$
{\rm ess}\!\!\sup_{t\in(0,T)}
(\|\nabla \psi\|_{L^2}+\|\nabla \Psi\|_{L^2}
+\|{\bf A}\|_{L^4}+\|{\bf \Lambda}\|_{L^4})\leq C,
$$
we obtain that
\begin{align*}
& \frac{\eta}{2} \|e(\cdot,t') \|_{L^2}^2 
+ \int_0^{t'}\Big(\frac{1}{\kappa^2}\|\nabla  e\|_{L^2}^2
+  \|{\bf A}    e\|_{L^2}^2\Big)\d t \\
&\leq 
\int_0^{t'}\Big(C\|{\bf A}\|_{L^4}\|\nabla e\|_{L^2} \|e\|_{L^4}
+C\|{\bf E}\|_{L^4}\|\nabla \Psi\|_{L^2}\|e\|_{L^4}
+C\| e\|_{L^4} \|{\bf A}\|_{L^4}\|\nabla e\|_{L^2}  \\
&\quad +C\| {\bf E}\|_{L^2}\|\nabla e\|_{L^2}
+C(\|{\bf A}\|_{L^4}+\|{\bf \Lambda}\|_{L^4})
\|{\bf E}\|_{L^2} \|e\|_{L^4} +C\| e\|_{L^2}^2
+C\|\nabla\cdot{\bf E}\|_{L^2}\|e\|_{L^2}\Big)\d t\\
%+C\|\nabla\cdot{\bf \Lambda}\|_{L^2}\|e\|_{L^4}^2 \\
&\leq \int_0^{t'}\Big(C\|\nabla e\|_{L^2}
 (\epsilon^{-1}\|e\|_{L^2}+\epsilon\|\nabla e\|_{L^2})
 +C\|{\bf E}\|_{{\bf H}_{\rm n}({\rm curl},{\rm div})}
 (\epsilon^{-1}\|e\|_{L^2}+\epsilon\|\nabla e\|_{L^2})\\
&\quad
+C\|\nabla e\|_{L^2}(\epsilon^{-1}\|e\|_{L^2}+\epsilon\|\nabla e\|_{L^2}) 
+C\| {\bf E}\|_{L^2}\|\nabla e\|_{L^2}
+C\|{\bf E}\|_{L^2}(\epsilon^{-1}\|e\|_{L^2}+\epsilon\|\nabla e\|_{L^2}) \\
&\quad +C\| e\|_{L^2}^2
+C\|\nabla\cdot {\bf E}\|_{L^2}\| e\|_{L^2} \Big)\d t\\  
%+C(\epsilon^{-1}\|e\|_{L^2}+\epsilon\|\nabla e\|_{L^2})^2 \Big)\d t\\
&\leq \int_0^{t'}\Big(\epsilon\|\nabla e\|_{L^2}^2+
\epsilon\|\nabla\times{\bf E}\|_{L^2}^2
+ \epsilon\|\nabla\cdot{\bf E}\|_{L^2}^2 
+(C+C\epsilon^{-3})\|e\|_{L^2}^2
+ (C+C \epsilon^{-1})\|{\bf E}\|_{L^2}^2\Big)\d t ,
\end{align*}
and
\begin{align*}
& \frac{1}{2}\|{\bf E}(\cdot,t')\|_{L^2}^2 
+\int_0^{t'}\Big(\|\nabla\times{\bf E} \|_{L^2}^2
+\|\nabla\cdot{\bf E} \|_{L^2}^2 \Big)\d t\\
%&\leq \int_0^{t'}\Big(C\| e^*\|_{L^4}
%\|\nabla  \psi\|_{L^{2}}\| {\bf E}\|_{L^4}
%+\|\Psi^*\nabla  e \|_{L^2}\| {\bf E}\|_{L^2}
%+ \|{\bf A}(|\psi|^2-|\Psi|^2)
%+|\Psi|^2 {\bf E}\|_{L^2}\|{\bf E}\|_{L^2}\Big)\d t\\
&\leq \int_0^{t'}\Big(C \| e\|_{L^4}\|\nabla  \psi\|_{L^2}\| {\bf E}\|_{L^4}
+\|\nabla  e \|_{L^2} \| {\bf E}\|_{L^2}
+ (\|e\|_{L^4}\| {\bf A}\|_{L^4}+\|{\bf E}\|_{L^2})\|{\bf E}\|_{L^2}\Big)\d t\\
&\leq \int_0^{t'}\Big(C(\epsilon^{-1}\| e\|_{L^2} 
+\epsilon\|\nabla  e \|_{L^2})
\| {\bf E}\|_{{\bf H}_{\rm n}({\rm curl},{\rm div})}
+\|\nabla  e \|_{L^2} \| {\bf E}\|_{L^2} \\
&\quad + (\|e\|_{L^2} +\|\nabla e\|_{L^2}
+\|{\bf E}\|_{L^2})\|{\bf E}\|_{L^2}\Big)\d t\\
&\leq
\int_0^{t'}\Big(\epsilon\|\nabla e\|_{L^2}^2
+\epsilon\|\nabla\times{\bf E}\|_{L^2}
+\epsilon\|\nabla\cdot{\bf E}\|_{L^2}
+(C+C \epsilon^{-3}) \|e\|_{L^2}^2
+ (C+C \epsilon^{-1})  \|{\bf E}\|_{L^2}^2\Big)\d t ,
\end{align*}
where $\epsilon$ is arbitrary positive number.
By choosing $\epsilon<\frac{1}{4}
\min(1, \kappa^{-2} )$ and summing up the last
two inequalities, we obtain that
\begin{align*}
&  \frac{\eta}{2}\|e(\cdot,t')\|_{L^2}^2
+\frac{1}{2}\|{\bf E}(\cdot,t')\|_{L^2}^2 
\leq 
\int_0^{t'}\Big(C\|e\|_{L^2}^2 
+C\| {\bf E}\|_{L^2}^2\Big)\d t ,
\end{align*}
which implies
\begin{align*}
&\max_{t\in(0,T)}
\bigg(\frac{\eta}{2}\|e\|_{L^2}^2
+\frac{1}{2}\|{\bf E}\|_{L^2}^2\bigg)=0  
\end{align*}
via Gronwall's inequality. 
Uniqueness of the weak solution is proved.\medskip

\subsection{Equivalence of 
\refe{PDE1}-\refe{init} and \refe{RFPDE1}-\refe{RFinit}}

Let $(\psi,{\bf A})$ be the unique solution of 
{\rm\refe{PDE1}-\refe{init}} and, for the given $\psi$ and ${\bf A}$, we let
$(p,q,u,v)$ be the solution of \refe{RFPDEp}-\refe{RFPDEv}.
Since ${\rm Re}\big[\psi^*\big(\frac{i}{\kappa} \nabla 
+ \mathbf{A}\big) \psi\big]\in L^\infty((0,T);L^2)$, 
the standard regularity estimates of 
Poisson's equations yield that
\begin{align*}
&p,q\in L^\infty((0,T);H^1) ,\\
&u,v\in L^\infty((0,T);H^1)\cap L^2((0,T);H^{1+s}),\quad
\partial_tu,\partial_tv,\Delta u,\Delta v\in L^2((0,T);L^2) .
\end{align*}
By setting $\widetilde{\bf A}
=\nabla\times u+\nabla v$, we have
$\widetilde{\bf A}\in L^\infty((0,T);{\bf L}^2)
\cap L^2((0,T);{\bf H}_{\rm n}({\rm curl},{\rm div}))$
and $\partial_t\widetilde {\bf A}\in L^2((0,T);({\bf H}_{\rm n}({\rm curl},{\rm div}))')$. 
Since ${\rm Re}\big[\psi^*\big(\frac{i}{\kappa} \nabla 
+ \mathbf{A}\big) \psi\big]=\nabla\times p+\nabla q$, 
the integration of \refe{RFPDEu} against $\nabla\times{\bf a}$
minus the integration of \refe{RFPDEv} 
against $\nabla\cdot{\bf a}$ gives
\begin{align*}
&\int_0^T\bigg[\bigg(\frac{\partial \widetilde{\bf A}}{\partial t},{\bf a}\bigg) 
+ \big(\nabla\times\widetilde{\bf A},\nabla\times{\bf a}\big)
+\big(\nabla\cdot\widetilde{\bf A},\nabla\cdot{\bf a}\big)  \bigg]\d t \nn\\
&=\int_0^T  \big(f,\nabla\times{\bf a}\big) \d t
-\int_0^T \bigg({\rm Re}\bigg[\psi^*\bigg(\frac{i}{\kappa} \nabla 
+ \mathbf{A}\bigg) \psi\bigg],{\bf a}\bigg) \d t
\end{align*}
for any ${\bf a}\in L^2((0,T);{\bf H}_{\rm n}({\rm curl},{\rm div}))$, 
with $\widetilde{\bf A}_0={\bf A}_0$. Comparing the above equation
with \refe{VPDE2}, we derive that $\widetilde{\bf A}= {\bf A}$.
Thus $\Delta u=\nabla\times{\bf A}\in L^\infty((0,T);L^2)\cap L^2((0,T);H^1)$
and $\Delta v=\nabla\cdot{\bf A}\in L^\infty((0,T);L^2)\cap L^2((0,T);H^1)$,
and from \refe{RFPDEu}-\refe{RFPDEv} we further derive that
$\partial_tu,\partial_tv\in L^\infty((0,T);L^2)\cap L^2((0,T);H^1)$.

Overall, \refe{RFPDE1}-\refe{RFinit} has a solution
$(\psi,p,q,u,v)$ which possesses the regularity 
specified in Definition \ref{DefWSol2},
satisfying \refe{VPDE1-2}-\refe{RFPDEv-2} 
with ${\bf A}=\nabla\times u+\nabla v$,
where $(\psi,{\bf A})$ coincides with the unique solution of
{\rm\refe{PDE1}-\refe{init}}.
Based on the regularity of 
$\psi,p,q,u$ and $v$, 
uniqueness of the solution for \refe{RFPDE1}-\refe{RFinit} 
can be proved in a similar way as Section \ref{SecUnique}.
We omit the proof due to the limitation on pages.\medskip

\subsection{Singularity of the solution}
 
From the analysis in the last two subsections we see that
$$
\left\{\begin{array}{ll}
-\Delta u=\nabla\times {\bf A}  &\mbox{in}~~\Omega,\\
u=0  &\mbox{on}~~\partial\Omega ,
\end{array}\right.
\qquad\mbox{and} \qquad
\left\{\begin{array}{ll}
\Delta v=\nabla\cdot{\bf A}  &\mbox{in}~~\Omega,\\
\partial_nv=0  &\mbox{on}~~\partial\Omega ,
\end{array}\right.
$$
where $\nabla\times {\bf A},
\nabla\cdot{\bf A}\in L^\infty((0,T);L^2)$.
For each fixed $t$, the solutions of 
the two Poisson's equations 
have the decomposition \cite{Kellogg}
\begin{align*}
&u(x,t)=\sum_{j=1}^m\beta_j(t)\Phi(|x-x_j|)|x-x_j|^{\pi/\omega_j}
\sin(\pi\Theta_j(x)/\omega_j) +\widetilde u(x,t),\\
&v(x,t)=\sum_{j=1}^m\gamma_j(t)\Phi(|x-x_j|)|x-x_j|^{\pi/\omega_j}
\cos(\pi\Theta_j(x)/\omega_j) +\widetilde v (x,t),
\end{align*}
where 
\begin{align*}
&\sum_{j=1}^m|\beta_j(t)|+\sum_{j=1}^m|\gamma_j(t)|
+\|\widetilde u (\cdot,t)\|_{H^2}+\|\widetilde v( \cdot,t)\|_{H^2}
\leq C\|\nabla\times {\bf A}\|_{L^2}+C\|\nabla\cdot{\bf A}\|_{L^2}.
\end{align*}
Thus 
\begin{align*}
&\|\beta_j\|_{L^\infty(0,T)}+\|\gamma_j\|_{L^\infty(0,T)}
+\|\widetilde u \|_{L^\infty((0,T);H^2)}+\|\widetilde v \|_{L^\infty((0,T);H^2)}\\
&\leq C(\|\nabla\times {\bf A}\|_{L^\infty((0,T);L^2)}
+ \|\nabla\cdot{\bf A}\|_{L^\infty((0,T);L^2)}) 
\leq C.
\end{align*}
The singular part of $\psi$ can be derived 
in a similar way.

The proof of Theorem \ref{MainTHM1} is completed. 
\bigskip

\section{Proof of Theorem \ref{MainTHM2}}
 
In this section, we prove further regularity of the solution 
under some compatibility conditions. 
%For simplicity,
%we only present a priori estimates of the solution.
We need the following lemma concerning the maximal
$L^p$ regularity of parabolic equations 
in a Lipscthiz domain \cite{Wood07}.
\begin{lemma}\label{MaxLp}
{\it The solution of the equation
\begin{align*}
\left\{\begin{array}{ll}
\partial_tu-\Delta u=f  &\mbox{in}~~\Omega,\\
u=0  &\mbox{on}~~\partial\Omega ,\\
u(x,0)=0 &\mbox{for}~~x\in\Omega 
\end{array}\right.
\qquad\mbox{and}\qquad
\left\{\begin{array}{ll}
\partial_tv-\Delta v=g  &\mbox{in}~~\Omega,\\
\partial_nv=0  &\mbox{on}~~\partial\Omega ,\\
v(x,0)=0 &\mbox{for}~~x\in\Omega 
\end{array}\right.
\end{align*}
satisfy that
\begin{align*}
&\|\partial_t u\|_{L^p((0,T);L^2)}+\|\Delta u\|_{L^p((0,T);L^2)}\leq
C_p\|f\|_{L^p((0,T);L^2)} ,\\
&\|\partial_t v\|_{L^p((0,T);L^2)}+\|\Delta v\|_{L^p((0,T);L^2)}\leq
C_p\|g\|_{L^p((0,T);L^2)} ,
\end{align*}
for any $1<p<\infty$.
}
\end{lemma}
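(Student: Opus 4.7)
The plan is to invoke the abstract maximal $L^p$-regularity theorem for self-adjoint nonnegative operators on Hilbert spaces (de Simon, 1964): if $A$ is such an operator on a Hilbert space $H$, then for every $1<p<\infty$ and every $F\in L^p((0,T);H)$, the Cauchy problem $w'+Aw=F$, $w(0)=0$, has a unique strong solution satisfying $\|w'\|_{L^p((0,T);H)}+\|Aw\|_{L^p((0,T);H)}\leq C_p\|F\|_{L^p((0,T);H)}$. This Hilbert-space fact, which can be proved by reducing to scalar ODEs via the spectral theorem plus the Mikhlin multiplier theorem on $L^p(\R;\C)$, bypasses the need for any $H^\infty$-calculus machinery or boundary regularity beyond what is needed to define the operator itself.

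First I would realize the Dirichlet Laplacian as the self-adjoint nonnegative operator $A_D$ on $H=L^2$ associated, via the Friedrichs--Kato representation theorem, with the symmetric continuous coercive sesquilinear form $a_D(u,w)=(\nabla u,\nabla w)$ on $\mathring H^1\times\mathring H^1$. Its domain is $D(A_D)=\{u\in\mathring H^1:\Delta u\in L^2\}$ with $\Delta u$ interpreted distributionally; no $H^2$-regularity is assumed, which is essential since $\Omega$ is only Lipschitz. Applying de Simon's theorem with $A=A_D$ gives $\|\partial_tu\|_{L^p((0,T);L^2)}+\|A_Du\|_{L^p((0,T);L^2)}\leq C_p\|f\|_{L^p((0,T);L^2)}$, and since $\Delta u=\partial_tu-f$ in $L^p((0,T);L^2)$, this is exactly the stated bound.

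For the Neumann problem, I would analogously define $A_N$ via the closed form $a_N(v,w)=(\nabla v,\nabla w)$ on $H^1\times H^1$; then $A_N$ is self-adjoint and nonnegative on $L^2$ with one-dimensional kernel spanned by the constants. To bypass the kernel I would split $g(\cdot,t)=g_0(\cdot,t)+\bar g(t)$, where $\bar g(t)=|\Omega|^{-1}\int_\Omega g(x,t)\,\d x$ and $g_0$ has vanishing mean. The constant part produces the trivial solution $v_1(t)=\int_0^t\bar g(s)\,\d s$ with $\Delta v_1\equiv 0$ and $\|\partial_tv_1\|_{L^p((0,T);L^2)}\leq \|g\|_{L^p((0,T);L^2)}$. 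The mean-zero part is solved in the closed invariant subspace $L^2_0=\{h\in L^2:\int_\Omega h\,\d x=0\}$, on which $A_N$ is strictly positive and self-adjoint, so de Simon's theorem applies and yields the two required bounds. Summing the two contributions completes the proof.

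The main subtle point is purely definitional: on a Lipschitz polygon, $D(A_D)$ and $D(A_N)$ generally lie strictly between $H^1$ and $H^2$, so $\|\Delta u\|_{L^2}$ must be interpreted through the distributional Laplacian and the Neumann condition through Green's formula, not through a trace of $\nabla v$. Once the two operators are constructed via the sesquilinear forms as above, self-adjointness on $L^2$ is automatic, the spectral theorem immediately yields bounded analytic semigroups, and the Hilbert-space maximal regularity estimate follows with a constant $C_p$ depending only on $p$, $T$ and $\Omega$, but not on any smoothness of $\partial\Omega$.
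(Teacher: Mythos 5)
Your proposal is correct, and it takes a genuinely different (and more self-contained) route than the paper, which does not prove the lemma at all but simply cites it to Wood's paper on maximal $L^p$-regularity for the Laplacian on Lipschitz domains \cite{Wood07}. The key observation you exploit is that the lemma only requires $L^p$ in time with values in $L^2(\Omega)$ in space, so the full strength of Wood's result (which concerns $L^q$-in-time, $L^p$-in-space regularity and rests on hard harmonic analysis on Lipschitz domains) is not needed: the $H=L^2$ case follows from soft Hilbert-space theory. Your chain of reasoning is sound: the form-defined Dirichlet and Neumann Laplacians are self-adjoint and nonnegative, hence $-A$ generates a bounded analytic semigroup; de Simon's theorem gives maximal $L^2$-regularity, and the extrapolation to all $1<p<\infty$ follows either from the $p$-independence of maximal regularity or, as you sketch, from the operator-valued Mikhlin theorem applied to the multiplier $\xi\mapsto A(i\xi+A)^{-1}$, whose uniform operator-norm bounds suffice on a Hilbert space where $R$-boundedness reduces to uniform boundedness. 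Your insistence on the distributional interpretation of $\Delta u$ and the form interpretation of the Neumann condition is exactly the right care to take on a nonconvex polygon, where $D(A_D)$ and $D(A_N)$ are not contained in $H^2$. Two minor remarks: the splitting of $g$ into its spatial mean and a mean-zero part is harmless but unnecessary, since the estimate on $\partial_t v$ and $A_Nv$ (as opposed to $v$ itself) holds for any nonnegative self-adjoint operator, kernel or not, because $\sup_{\lambda\geq 0}|\lambda/(i\xi+\lambda)|\leq 1$; and strictly speaking one should also note that the weak solution appearing in the paper coincides with the semigroup (mild) solution, which is standard. What your approach buys is a proof readable without consulting \cite{Wood07}; what the citation buys is brevity and, if ever needed, stronger $L^p$-in-space conclusions.
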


%From Theorem \ref{MainTHM1} we see that 
%$\psi\in L^2((0,T);{\mathcal  H}^2(D))$ and
%${\bf A}\in L^2((0,T);{\bf H}^2(D))$
%for any subdomain $D\subset\subset\Omega$.
%Thus the equations \refe{PDE1}-\refe{PDE2} hold 
%pointwise a.e. in $\Omega\times(0,T)$.

Rewriting \refe{PDE1} as
\begin{align*}
&\eta\frac{\partial (\psi-\psi_0)}{\partial t} 
-\frac{1}{\kappa^2}\Delta (\psi-\psi_0)
 = -g, 
\end{align*}
with 
$$
g=
\frac{i}{\kappa}\nabla\cdot({\bf A}\psi)
+\frac{i}{\kappa}{\bf A}\cdot\nabla\psi
+|{\bf A}|^2\psi+(|\psi|^2-1)\psi
-i\eta\kappa\psi\nabla\cdot{\bf A}-\frac{i}{\kappa}\Delta\psi_0 ,
$$
and applying Lemma \ref{MaxLp} 
(here we need the compatibility condition 
$\partial_n\psi_0=0$ on $\partial\Omega$), 
we derive that, for any given $1<p<\infty$,
\begin{align*}
&\|\partial_t(\psi-\psi_0) \|_{L^p((0,T);L^2)}
 +\|\Delta(\psi-\psi_0)\|_{L^p((0,T);L^2)}\\
 &\leq C\|g\|_{L^p((0,T);L^2)} \\
&\leq C\|\nabla\cdot{\bf A}\|_{L^p((0,T);L^2)}
+C\|{\bf A}\|_{L^{\infty}((0,T);L^4)}\|\nabla\psi\|_{L^p((0,T);L^4)} 
 +C\|\mathbf{A} \|_{L^{2p}((0,T);L^4)}^2
 +C \\
%&\leq C\|\nabla\cdot{\bf A}\|_{L^p((0,T);L^2)}
%+C\|\nabla\psi\|_{L^p((0,T);L^4)}  +C\\
&\leq C 
+C\|\nabla\psi\|_{L^p((0,T);L^2)}^{(1-4/p_s)/(2-4/p_s)}
\|\nabla\psi\|_{L^p((0,T);L^{p_s})}^{1/(2-4/p_s)}  +C\\
&\leq  
C\|\nabla\psi\|_{L^p((0,T);L^{p_s})}^{1/(2-4/p_s)}  +C\\
&\leq  
C\|\Delta\psi\|_{L^p((0,T);L^{2})}^{1/(2-4/p_s)}  +C,
\end{align*}
which implies that
$
\|\partial_t\psi \|_{L^p((0,T);L^2)}
+\|\Delta\psi\|_{L^p((0,T);L^2)}
\leq C.
$
In other words, we have
\begin{align}
\psi\in \bigcap_{p>1}W^{1,p}((0,T);L^2)\cap L^p((0,T);H^{1+s})
\hookrightarrow L^\infty((0,T);W^{1,4}) .
\end{align}

Let $\overline w=\nabla\cdot{\bf A}$ and consider 
the divergence of \refe{PDE2}, i.e.
\begin{align*}
&\frac{\partial \overline w}{\partial t}  
-\Delta \overline w    
=  - {\rm Re}\bigg[\nabla\psi^*\cdot\bigg(\frac{i}{\kappa} \nabla\psi 
+ \mathbf{A}\psi\bigg) 
 + \psi^*\bigg(\frac{i}{\kappa} \Delta\psi+ \psi\nabla\cdot\mathbf{A}  
 +\mathbf{A}\cdot\nabla\psi\bigg)\bigg] ,
\end{align*}
with the boundary condition $\partial_nw=0$ on $\partial\Omega$.
The standard energy estimates of the above equation give 
\begin{align*}
&\|\partial_t\overline w\|_{L^2((0,T);L^2)} 
+\|\Delta\overline  w \|_{L^2((0,T);L^2)}\\
&\leq C\|\overline w_0\|_{H^1}
+C\big\|\nabla\psi^*\cdot\big(i\kappa^{-1} \nabla\psi 
+ \mathbf{A}\psi\big) 
 + \psi^*\big(i\kappa^{-1} \Delta\psi
 + \psi\nabla\cdot\mathbf{A}  
 +\mathbf{A}\cdot\nabla\psi\big)\big\|_{L^2((0,T);L^2)}\\
&\leq C\|\nabla\cdot{\bf A}_0\|_{H^1}
+C\|\nabla\psi^*\|_{L^4((0,T);L^4)}(\| \nabla\psi\|_{L^4((0,T);L^4)} 
+ \|\mathbf{A}\|_{L^4((0,T);L^4)}) \\
&~~~ + C(\| \Delta\psi\|_{L^2((0,T);L^2)}+\|  \nabla\cdot\mathbf{A}\|_{L^2((0,T);L^2)}  
+\|\mathbf{A}\|_{L^4((0,T);L^4)}\|\nabla\psi\|_{L^4((0,T);L^4)})\\
&\leq C
\end{align*}
If we let $w=\nabla\times{\bf A}-f$ and 
consider the curl of \refe{PDE2},
in a similar way one can prove 
\begin{align*}
&\|\partial_tw\|_{L^2((0,T);L^2)} 
+\|\Delta w \|_{L^2((0,T);L^2)}\leq C .
\end{align*}
The last two inequalities imply that
\begin{align}
\partial_t{\bf A}\in L^2((0,T);{\bf H}_{\rm n}({\rm curl},{\rm div}))
\hookrightarrow L^2((0,T);{\bf L}^4) .
\end{align}

Consider the time derivative of \refe{PDE1} 
and denote $\dot\psi=\partial_t\psi$.
We have 
\begin{align*}
&\eta\frac{\partial \dot\psi}{\partial t} 
 -\frac{1}{\kappa^2}\Delta\dot\psi=-  \dot g,
\end{align*}
with the boundary condition 
$\partial_n\dot\psi=0$ on $\partial\Omega$,
where
\begin{align*}
\dot g&=\big(i\kappa^{-1}-i\eta\kappa\big)\dot\psi \nabla\cdot{\bf A}
 +\big(i\kappa^{-1}-i\eta\kappa\big)\psi \nabla\cdot\dot{\bf A}
+2i\kappa^{-1}\dot{\bf A}\cdot\nabla\psi 
+2i\kappa^{-1}{\bf A}\cdot\nabla\dot\psi \\
&~~~ + 2{\bf A}\cdot\dot{\bf A}  \psi
+|{\bf A}|^2\dot\psi
 + (\dot\psi\psi^*+\psi\dot\psi^*) \psi 
  + (|\psi|^2-1)\dot\psi .
\end{align*}
The energy estimates of the equation give that
\begin{align*}
&\|\partial_{t}\dot\psi\|_{L^2((0,T);L^2)}
+\|\Delta\dot\psi\|_{L^2((0,T);L^2)}
+\|\nabla\dot\psi\|_{L^\infty((0,T);L^2)}\\
&\leq C\|g\|_{L^2((0,T);L^2)}\\
&\leq C\|\dot\psi\|_{L^\infty((0,T);L^2)}
\|\nabla\cdot{\bf A}\|_{L^2((0,T);L^\infty)}
+C\|\nabla\cdot\dot{\bf A}\|_{L^2((0,T);L^2)} \\
&\quad +C\|\dot{\bf A}\|_{L^2((0,T);L^4)}
\|\nabla\psi \|_{L^\infty((0,T);L^4)}
+C\|{\bf A}\|_{L^\infty((0,T);L^4)}
\|\nabla\dot\psi \|_{L^2((0,T);L^4)} \\
&\quad 
+C\|{\bf A}\|_{L^\infty((0,T);L^4)}\|\dot{\bf A}\|_{L^2((0,T);L^4)}
+C\|{\bf A}\|_{L^\infty((0,T);L^4)}^2\|\dot\psi\|_{L^2((0,T);L^\infty)}
+C\|\dot\psi\|_{L^2((0,T);L^2)}\\
&\leq C\|\dot\psi\|_{L^2((0,T);L^2)}^{1/2}
\|\partial_{t}\dot\psi\|_{L^2((0,T);L^2)}^{1/2}
+C+C+C\|\nabla\dot\psi \|_{L^2((0,T);L^4)} 
+C+C\|\nabla\dot\psi \|_{L^2((0,T);L^4)}   \\
&\leq C\|\partial_{t}\dot\psi\|_{L^2((0,T);L^2)}^{1/2}
+C\|\nabla\dot\psi\|_{L^2((0,T);L^2)}^{\frac{1-4/p_s}{2-4/p_s}}
\|\nabla\dot\psi\|_{L^2((0,T);L^{p_s})}^{\frac{1}{2-4/p_s}} +C\\
&\leq C\|\partial_{t}\dot\psi\|_{L^2((0,T);L^2)}^{1/2}
+C\|\dot\psi\|_{L^2((0,T);L^2)}^{\frac{(1-4/p_s)s}{(2-4/p_s)(1+s)}}
\|\dot\psi\|_{L^2((0,T);H^{1+s})}^{\frac{1-4/p_s}{(2-4/p_s)(1+s)}}
\|\nabla\dot\psi\|_{L^2((0,T);L^{p_s})}^{\frac{1}{2-4/p_s}} +C\\
%&\leq C\|\partial_{t}\dot\psi\|_{L^2((0,T);L^2)}^{1/2}
%+C\|\Delta\dot\psi\|_{L^2((0,T);L^2)}^{\frac{1-4/p_s}{(2-4/p_s)(1+s)}
%+\frac{1}{2-4/p_s}}  +C\\
&\leq C\|\partial_{t}\dot\psi\|_{L^2((0,T);L^2)}^{1/2}
+C\|\Delta\dot\psi\|_{L^2((0,T);L^2)}^{1-\frac{(1-4/p_s)s}{(2-4/p_s)(1+s)}
}  +C,
\end{align*}
which reduces to
%\|\nabla\dot\psi\|_{L^2((0,T);L^2)}^{(1-4/p_s)/(2-4/p_s)}
%\|\nabla\dot\psi\|_{L^2((0,T);L^{p_s})}^{1/(2-4/p_s)} 
%
\begin{align*}
&\|\partial_{t}\dot\psi\|_{L^2((0,T);L^2)}+\|\Delta\dot\psi\|_{L^2((0,T);L^2)}
+\|\nabla\dot\psi\|_{L^\infty((0,T);L^2)}\leq C .
\end{align*}
In other words, we have
\begin{align}
&\|\partial_{tt}\psi\|_{L^2((0,T);L^2)}+\|\partial_t\psi\|_{L^2((0,T);H^{1+s})}
+\|\partial_t\psi\|_{L^\infty((0,T);H^1)}\leq C.
\end{align}

Now we consider the time derivative of \refe{RFPDEp}-\refe{RFPDEv}, i.e.
\begin{align}
&\Delta \dot p =-\nabla\times  {\rm Re}\big[\psi^*
\big(i\kappa^{-1} \nabla + \mathbf{A}\big)
 \psi\big]^{\mbox{\large$\cdot$}}
\label{RFPDEpd}\\[5pt]
&\Delta \dot q =\nabla\cdot {\rm Re}\big[\psi^*
\big(i\kappa^{-1}\nabla + \mathbf{A}\big) 
\psi\big]^{\mbox{\large$\cdot$}} 
 \label{RFPDEqd}\\[5pt]
&\frac{\partial \dot u}{\partial t} -\Delta \dot u
=  \dot f-\dot p ,\label{RFPDEud}\\[5pt]
&\frac{\partial \dot v}{\partial t} -\Delta \dot v
=  -\dot q ,
\label{RFPDEvd}
\end{align}
with the boundary conditions $\dot p=0$, 
$\partial_n\dot q=0$, $\dot u=0$ and 
$\partial_n\dot v=0$ on $\partial\Omega$. In particular,
the boundary condition $\dot u=0$ on $\partial\Omega$
at the time $t=0$
requires the compatibility condition $\nabla\times{\bf A}_0=f_0$ 
on $\partial\Omega$. 
Since 
\begin{align*}
&\big\|\big[\psi^*
\big(i\kappa^{-1}\nabla + \mathbf{A}\big) 
\psi\big]^{\mbox{\large$\cdot$}}\,\big\|_{L^2((0,T);L^2)}\\
&=\big\|\dot\psi^*\big(i\kappa^{-1} \nabla 
+ \mathbf{A}\big) \psi
+\psi^*\big(i\kappa^{-1} \nabla 
+ \mathbf{A}\big) \dot\psi
+|\psi|^2\dot{\bf A}\big\|_{L^2((0,T);L^2)}\\
&\leq \|\dot\psi^*\|_{L^2((0,T);L^\infty)}
\|i\kappa^{-1} \nabla \psi + \mathbf{A} \psi\|_{L^\infty((0,T);L^2)}\\
&\quad 
+\| i\kappa^{-1} \nabla \dot\psi\|_{L^2((0,T);L^2)}
+ \|\mathbf{A}\|_{L^\infty((0,T);L^2)}\| \dot\psi\|_{L^2((0,T);L^\infty)}
+\| \dot{\bf A}\|_{L^2((0,T);L^2)}\\
&\leq C ,
\end{align*}
the energy estimates of \refe{RFPDEpd}-\refe{RFPDEqd} give
$$
\|\nabla \dot p\|_{L^2((0,T);L^2)}+\|\nabla \dot q\|_{L^2((0,T);L^2)} 
\leq \big\|\big[\psi^*
\big(i\kappa^{-1} \nabla + \mathbf{A}\big) 
\psi\big]^{\mbox{\large$\cdot$}}\,\big\|_{L^2((0,T);L^2)}
\leq C,
$$
and then the energy estimates of \refe{RFPDEud}-\refe{RFPDEvd} give
\begin{align*}
&\|\partial_t\dot u\|_{L^2((0,T);L^2)}+\|\Delta\dot u\|_{L^2((0,T);L^2)}
+\|\nabla\dot u\|_{L^\infty((0,T);L^2)}\leq C\|\dot f-\dot p\|_{L^2((0,T);L^2)}
\leq C ,\\
&\|\partial_t\dot v\|_{L^2((0,T);L^2)}+\|\Delta\dot v\|_{L^2((0,T);L^2)}
+\|\nabla\dot v\|_{L^\infty((0,T);L^2)}\leq C\|\dot q\|_{L^2((0,T);L^2)} 
\leq C,
\end{align*}
which further imply that
$\partial_tu, \partial_tv\in L^2((0,T);H^{1+s}) $.
%\begin{align*}
%&\|\partial_{tt}u\|_{L^2((0,T);L^2)}
%+\|\Delta\partial_tu\|_{L^2((0,T);L^2)}
%+\|\partial_tu\|_{L^2((0,T);H^{1+s})}
%+\|\partial_tu\|_{L^\infty((0,T);H^1)}\leq C,\\
%&\|\partial_{tt}v\|_{L^2((0,T);L^2)}+\|\Delta\partial_tv\|_{L^2((0,T);L^2)}
%+\|\partial_tv\|_{L^2((0,T);H^{1+s})}
%+\|\partial_tv\|_{L^\infty((0,T);H^1)}\leq C .
%\end{align*}

The proof of Theorem \ref{MainTHM2} is completed. \bigskip

\section{Proof of Theorem \ref{MainTHM3}}
\setcounter{equation}{0}

The proof consists of two parts.
In the first part, we prove the boundedness of the finite
element solution and the invertibility of the linear systems,
which are independent of the regularity of the exact solution.
In the second part, we present error estimates of the 
finite element solution based on a mathematical
induction on the $L^4$ norm of 
${\bf A}_h^n=\nabla\times u_h^n+\nabla v_h^n$,
which is needed to control the nonlinear terms in the equations.
%Note that the $L^4$ norm estimate of ${\bf A}_h^n$ is almost critical
%in the sense that for any given $p_0>4$  there exists a 
%nonconvex polygon such that
%the exact solution ${\bf A}^n$ does not belong to ${\bf L}^{p_0}$.

\subsection{Stability of the finite element solution}

Substituting $\varphi=\psi_h^{n+1}$ into \refe{FEMEq1}
and considering the real part, we derive that
\begin{align*}
& D_\tau\bigg(\frac{\eta}{2} \|\psi^{n+1}_h\|_{L^2}^2\bigg) + 
\|(i\kappa^{-1}\nabla + \mathbf{A}^{n}_h) \psi^{n+1}_h \|_{L^2}^2 +
\int_\Omega |\psi^{n}_h|^{2} |\psi^{n+1}_h|^2\d x
= \|\psi^{n+1}_h\|_{L^2}^2 ,
\end{align*}
which together with the discrete Gronwall's inequality 
implies that, when $\tau<\eta/4$,
\begin{align}\label{APEPsi}
&\max_{0\leq n\leq N-1} \|\psi^{n+1}_h\|_{L^2}^2  + 
\sum_{n=0}^{N-1}\tau \|(i\kappa^{-1}\nabla + \mathbf{A}^{n}_h) \psi^{n+1}_h \|_{L^2}^2  \leq C .
\end{align}
Since $|\chi(\psi_h^{n})|\leq 1$, 
by substituting $\xi=p_h^{n+1}$ into \refe{FEMEq2}
and substituting $\zeta=q_h^{n+1}$ into \refe{FEMEq3}, 
we obtain
\begin{align*}
&\|\nabla p^{n+1}_h\|_{L^2} +\|\nabla q^{n+1}_h\|_{L^2} 
\leq C\|(i\kappa^{-1}\nabla 
+ \mathbf{A}^n_h) \psi^{n+1}_h\|_{L^2} ,
\end{align*}
which together with \refe{APEPsi} gives
\begin{align}
&\sum_{n=0}^{N-1}\tau\|\nabla p^{n+1}_h\|_{L^2} 
+\sum_{n=0}^{N-1}\tau\|\nabla q^{n+1}_h\|_{L^2} 
\leq C .
\end{align}
Then, substituting $\theta=D_\tau u_h^{n+1}$ into \refe{FEMEq4}
and $\vartheta=D_\tau v_h^{n+1}$ into \refe{FEMEq5},
we derive that
\begin{align}\label{APEpq}
& \sum_{n=0}^{N-1}\tau 
\big(\|D_\tau u_h^{n+1}\|_{L^2}^2 +\|D_\tau v_h^{n+1}\|_{L^2}^2 \big)
+\max_{0\leq n\leq N-1}  \big( \|\nabla u_h^{n+1}\|_{L^2}^2  
+ \|\nabla v_h^{n+1}\|_{L^2}^2\big) \nn\\
&\leq C\sum_{n=0}^{N-1}\tau\|f^{n+1}\|_{L^2} 
+C\sum_{n=0}^{N-1}\tau\|p^{n+1}_h\|_{L^2} 
+C\sum_{n=0}^{N-1}\tau\|q^{n+1}_h\|_{L^2}    \leq C  .
\end{align}
%From the last inequality and the equations 
%\refe{FEMEq4}-\refe{FEMEq5}
%we also derive that
%\begin{align}\label{APEpq}
%& \sum_{n=0}^{N-1}\tau 
%\big(\|\Delta_h u_h^{n+1}\|_{L^2}^2 
%+\|\Delta_h v_h^{n+1}\|_{L^2}^2 \big)
%\leq C  .
%\end{align}

From the above derivations it is not difficult to see that
the linear systems defined by 
\refe{FEMEq1}-\refe{FEMEq5} are invertible when $\tau<\eta/4$,
and the discrete solution 
$(\psi_h^{n},p_h^{n},q_h^{n},
u_h^{n},v_h^{n})$ solved from \refe{FEMEq1}-\refe{FEMEq5}
is uniformly bounded in 
$L^\infty_\tau({\mathcal L}^2)\times L^2_\tau(H^1)
\times L^2_\tau(H^1)\times 
L^\infty_\tau(H^1)\times L^\infty_\tau(H^1)$
with respect to the time-step size $\tau$ and spatial mesh size $h$. 
%We see that the invertibility of the linear system
%and the stability of the finite element solution do not depend
%on the regularity of the exact solution.

\subsection{Error estimates}

Note that the exact solution $(\psi,p,q,u,v)$ satisfies the equations 
\begin{align}
&\big(D_\tau \psi^{n+1}, \varphi\big) +\big
((i\kappa^{-1}\nabla + \mathbf{A}^{n})
\psi^{n+1},(i\kappa^{-1}\nabla + \mathbf{
A}^{n})  \varphi\big) \nn\\
&\qquad\, +\big
((|\psi^{n}|^{2}-1) \psi^{n+1},
\varphi\big) 
+\big(i\eta\kappa {\bf A}^n,\nabla((\psi^{n+1})^* \varphi\big)
= \big(E_\psi^{n+1}, \varphi\big),&
\label{ExVEq1}\\
&(\nabla p^{n+1},\nabla \xi)=\big({\rm Re}
[\chi(\psi^n)^*(i\kappa^{-1}\nabla\psi^{n+1}
+ \mathbf{A}^n \psi^{n+1})],
\nabla\times \xi\big)
+ \big(E_p^{n+1},\nabla\times \xi\big)
\label{ExVEq2}\\
&(\nabla q^{n+1},\nabla \zeta)
=\big({\rm Re}
[\chi(\psi^n)^*(i\kappa^{-1}\nabla\psi^{n+1} 
+ \mathbf{A}^n \psi^{n+1})],\nabla \zeta\big)
+ \big(E_q^{n+1},\nabla \zeta\big)
\label{ExVEq3}\\
&\big(D_\tau u^{n+1}, \theta\big) 
+\big(\nabla u^{n+1},\nabla\theta\big)
=(f^{n+1}-p^{n+1},\theta)
+ \big(E_u^{n+1}, \theta\big) 
\label{ExVEq4}\\
&\big(D_\tau v^{n+1}, \vartheta\big) 
+\big(\nabla v^{n+1},\nabla\vartheta\big)
=(-q^{n+1},\vartheta)
+ \big(E_v^{n+1}, \vartheta\big)
,\label{ExVEq5}
\end{align}
for all $\varphi\in{\mathcal  V}^1_h$, $\xi,\theta\in \mathring V^1_h$
and $\zeta,\vartheta\in V^1_h$, 
with 
\begin{align}
&(\nabla u^{0},\nabla \xi)=\big({\bf A}_0,
\nabla\times \xi\big)
,\quad\forall~\xi\in\mathring V_h^1 ,
\label{ExVEqu0}\\
&(\nabla v^{0},\nabla \zeta)
=\big({\bf A}_0,\nabla\cdot \zeta\big)
,\quad\,\,\, \forall~\zeta\in V_h^1 ,
\label{ExVEqv0}
\end{align}
where
\begin{align*}
E_\psi^{n+1}
&=\eta( D_\tau \psi^{n+1}- \partial_t \psi^{n+1} )
+\frac{i}{\kappa}\nabla\cdot(({\bf A}^n-{\bf A}^{n+1})\psi^{n+1})
+\frac{i}{\kappa}({\bf A}^n-{\bf A}^{n+1})\cdot\nabla\psi^{n+1}\\
&~~~+(|{\bf A}^n|^2-|{\bf A}^{n+1}|^2)\psi^{n+1}
+(|\psi^{n}|^2-|\psi^{n+1}|^2)\psi^{n+1}
+i\eta\kappa\psi^{n+1}\nabla\cdot({\bf A}^{n+1}-{\bf A}^n)\\[5pt]
%%%%%%%%%%%%%%%%%%%%%%%%%%%%%%
E_p^{n+1}
&=E_q^{n+1}={\rm Re}
[i\kappa^{-1}(\psi^{n+1}-\psi^n)^* \nabla\psi^{n+1}
+((\psi^{n+1})^* {\bf A}^{n+1}-(\psi^{n})^* {\bf A}^{n}) \psi^{n+1})\\[5pt]
%%%%%%%%%%%%%%%%%%%%%%%%%%%%%%
E_u^{n+1}
&=D_\tau u^{n+1}-\partial_tu^{n+1}\\[5pt]
%%%%%%%%%%%%%%%%%%%%%%%%%%%%%%
E_v^{n+1}
&=D_\tau v^{n+1}-\partial_tv^{n+1} ,
\end{align*}
are truncation errors due to the time
discretization, which satisfy that
\begin{align*}
\sum_{n=0}^{N-1}\tau
\big(
\|E_\psi^{n+1}\|_{L^2}^2
+\|E_p^{n+1}\|_{L^2}^2
+\|E_q^{n+1}\|_{L^2}^2
+\|E_u^{n+1}\|_{L^2}^2
+\|E_v^{n+1}\|_{L^2}^2
\big) \leq C\tau^2 .
\end{align*}

Let $R_h:{\mathcal  H}^1\rightarrow {\mathcal  V}_h^1$ 
and $\mathring R_h :\mathring H^1\rightarrow \mathring V_h^1$
denote the Ritz projection operator onto the
finite element spaces, i.e.
\begin{align*}
&(\nabla(\phi-R_h\phi),\nabla \varphi)=0
\quad\forall~\phi\in {\mathcal  H}^1~~
\mbox{and}~~\varphi\in {\mathcal  V}_h^1,\\
&(\nabla(\phi-\mathring R_h\phi),\nabla \varphi)=0
\quad\forall~\phi\in \mathring H^1~~\mbox{and}~~
\varphi\in \mathring V_h^1 .
\end{align*}
Then $R_h$, restricted to $H^1$, is just the Ritz projection from
$H^1$ onto $V_h^1$, and we have \cite{BS02,CLTW}
\begin{align*}
&\|\phi-R_h\phi\|_{{\mathcal  L}^2}
+h^s\|\nabla(\phi-R_h\phi)\|_{{\mathcal  L}^2}
\leq Ch^{2s}\|\phi\|_{{\mathcal  H}^{1+s}},
\qquad\, \forall~\phi \in {\mathcal  H}^{1+s},\\
%&\|\nabla(\phi-R_h\phi)\|_{{\mathcal  L}^2}
%\leq Ch^{s}\|\phi\|_{{\mathcal  H}^{1+s}} ,
%\quad\forall~\phi \in {\mathcal  H}^{1+s},\\
%&\|\phi-R_h\phi\|_{L^2}+h\|\nabla(\phi-R_h\phi)\|_{L^2}
%\leq Ch^{r+1}\|\phi\|_{H^{r+1}} ,\quad\forall~\phi \in H^{r+1},\\
&\|\phi-\mathring R_h\phi\|_{L^2}
+h^s\|\nabla(\phi-\mathring R_h\phi)\|_{L^2}
\leq Ch^{2s}\|\phi\|_{H^{1+s}},
\qquad\, \forall~\phi \in \mathring H^1\cap H^{1+s}.
%&\|\nabla(\phi-\mathring R_h\phi)\|_{L^2}
%\leq Ch^{s}\|\phi\|_{H^{1+s}},
%\quad\forall~\phi \in \mathring H^1\cap H^{1+s}  .
\end{align*}

Let $e_{\psi,h}^{n+1}=\psi_h^{n+1}-R_h\psi^{n+1}$,
$e_{p,h}^{n+1}=p_h^{n+1}-\mathring R_hp^{n+1}$,
$e_{q,h}^{n+1}=q_h^{n+1}-R_hq^{n+1}$,
$e_{u,h}^{n+1}=u_h^{n+1}-\mathring R_hu^{n+1}$,
$e_{v,h}^{n+1}=v_h^{n+1}-R_hv^{n+1}$.
The difference between \refe{FEMEq1}-\refe{FEMEqv0} 
and \refe{ExVEq1}-\refe{ExVEqv0} gives that
$u_h^0=\mathring R_hu^0$, 
$v_h^0= R_hv^0$ and
\begin{align}
&\big(\eta D_\tau e_{\psi,h}^{n+1}, \varphi\big) + 
\kappa^{-2}\big(\nabla  
e_{\psi,h}^{n+1}, \nabla   \varphi\big) \nn \\
&=\big(\eta D_\tau (\psi^{n+1}-R_h\psi^{n+1}), 
\varphi\big)  -\big(E_\psi^{n+1}, \varphi\big)
-\frac{i}{\kappa}\big({\bf A}^{n}_h\cdot
\nabla e_{\psi,h}^{n+1} ,\varphi\big)
  \nn \\
&\quad +\frac{i}{\kappa}\big({\bf A}^{n}_h
\cdot\nabla (\psi^{n+1}-R_h\psi^{n+1}) ,\varphi\big)
 -\frac{i}{\kappa}\big(({\bf A}_h^{n}-{\bf A}^{n})
\cdot\nabla \psi^{n+1} ,\varphi\big)
\nn\\
&\quad 
+\frac{i}{\kappa}\big(e_{\psi,h}^{n+1}{\bf A}_h^{n},\nabla\varphi\big)
-\frac{i}{\kappa}\big((\psi^{n+1}
-R_h\psi^{n+1}){\bf A}_h^{n},\nabla\varphi\big) 
+\frac{i}{\kappa}\big(\psi^{n+1}
({\bf A}_h^{n}-{\bf A}^{n}),\nabla\varphi\big)  
 \nn\\
&\quad 
 - \big((|{\bf A}_h^n|^2 -|{\bf A}^n|^2)  \psi^{n+1},  \varphi\big)
 - \big( |{\bf A}^n_h|^2e_{\psi,h}^{n+1},  \varphi\big) 
 - \big( |{\bf A}^n_h|^2(\psi^{n+1}-R_h\psi^{n+1}),  \varphi\big) 
  \nn\\
&\quad
-\big( (|\psi_h^{n}|^{2}-1) \psi_h^{n+1}
-(|\psi^{n}|^{2}-1) \psi^{n+1},\varphi\big)  
+\big(i\eta\kappa{\bf A}_h^n,
\nabla((e_{\psi,h}^{n+1})^*\varphi)\big) 
 \nn\\
&\quad
-\big(i\eta\kappa{\bf A}_h^n,
\nabla((\psi^{n+1}-R_h\psi^{n+1})^*\varphi)\big) 
+\big(i\eta\kappa({\bf A}_h^n-{\bf A}^n),
\nabla((\psi^{n+1})^*\varphi)\big)  ,
\label{ErrFEMEqpsi}\\[10pt]
%%%%%%%%%%%%%%%%%%%%%%%%%%%
&(\nabla e_{p,h}^{n+1},\nabla \xi)\nn\\
&=
- \big(E_p^{n+1},\nabla\times \xi\big)
+\big({\rm Re}
[((\chi(\psi_h^n)^*-\chi(R_h\psi^n)^*
)(i\kappa^{-1}\nabla\psi^{n+1}
+ \mathbf{A}^n \psi^{n+1})],
\nabla\times \xi\big) \nn\\
&\quad\, -\big({\rm Re}
[(\chi(\psi^n)^*-\chi(R_h\psi^n)^*)(i\kappa^{-1}\nabla\psi^{n+1}
+ \mathbf{A}^n \psi^{n+1})],
\nabla\times \xi\big)\nn\\
&\quad\,  +\big({\rm Re}
[\chi(\psi_h^n)^*(i\kappa^{-1}\nabla e_{\psi,h}^{n+1} 
+ ({\bf A}_h^n-{\bf A}^n) \psi^{n+1} 
+{\bf A}_h^n e_{\psi,h}^{n+1} )],
\nabla\times \xi\big) \nn\\
&\quad\,  -\big({\rm Re}
[\chi(\psi_h^n)^*(i\kappa^{-1}\nabla(\psi^n-R_h\psi^n)
+{\bf A}_h^n (\psi^n-R_h\psi^n) )],
\nabla\times \xi\big) ,  \label{ErrFEMEqp}\\[10pt]
%%%%%%%%%%%%%%%%%%%%%%%%%%%
&(\nabla e_{q,h}^{n+1},\nabla\zeta)\nn\\
&=- \big(E_q^{n+1},\nabla\zeta\big)
+\big({\rm Re}
[((\chi(\psi_h^n)^*-\chi(R_h\psi^n)^*
)(i\kappa^{-1}\nabla\psi^{n+1}
+ \mathbf{A}^n \psi^{n+1})],
\nabla\zeta\big) \nn\\
&\quad\,
-\big({\rm Re}
[(\chi(\psi^n)^*-\chi(R_h\psi^n)^*)(i\kappa^{-1}\nabla\psi^{n+1}
+ \mathbf{A}^n \psi^{n+1})],
\nabla\zeta\big)\nn\\
&\quad\, +\big({\rm Re}
[\chi(\psi_h^n)^*(i\kappa^{-1}\nabla e_{\psi,h}^{n+1} 
+ ({\bf A}_h^n-{\bf A}^n) \psi^{n+1} 
+{\bf A}_h^n e_{\psi,h}^{n+1} ],
\nabla\zeta\big) \nn\\
&\quad\, -\big({\rm Re}
[\chi(\psi_h^n)^*(i\kappa^{-1}\nabla(\psi^n-R_h\psi^n)
+{\bf A}_h^n (\psi^n-R_h\psi^n) ],
\nabla\zeta\big) , \label{ErrFEMEqq} \\[10pt]
&\big(D_\tau e_{u,h}^{n+1}, \theta\big) 
+\big(\nabla e_{u,h}^{n+1},\nabla\theta\big) \nn\\
&=\big(D_\tau(u^{n+1}-\mathring R_hu^{n+1}), \theta\big) 
+\big( p^{n+1}-p_h^{n+1},\theta\big)
- \big(E_u^{n+1}, \theta\big) , \label{ErrFEMEqu}\\[10pt]
&\big(D_\tau e_{v,h}^{n+1}, \vartheta\big) 
+\big(\nabla e_{v,h}^{n+1},\nabla\vartheta\big)\nn\\
&=\big(D_\tau(v^{n+1}- R_hv^{n+1}), \vartheta\big) 
+( q^{n+1}-q_h^{n+1},\vartheta)
- \big(E_v^{n+1}, \vartheta\big) ,
\label{ErrFEMEqv}
\end{align}
for all $\varphi\in{\mathcal  V}^1_h$, $\xi,\theta\in \mathring V^1_h$
and $\zeta,\vartheta\in V^1_h$,
with  $\|e_{\psi,h}^0\|_{{\mathcal  L}^2}\leq Ch^{2s}$,
$\|e_{\psi,h}^0\|_{{\mathcal  H}^1}\leq Ch^{s}$
and $e_{u,h}^{0}=e_{v,h}^{0}=0$. 

Substituting $\theta=D_\tau e_{u,h}^{n+1}$
and $\vartheta=D_\tau e_{v,h}^{n+1}$ into 
\refe{ErrFEMEqu}-\refe{ErrFEMEqv}, we get
\begin{align*}
&\|D_\tau e_{u,h}^{n+1}\|_{L^2}^2 
+\|\Delta_h e_{u,h}^{n+1}\|_{L^2}^2 
+D_\tau \|\nabla e_{u,h}^{n+1}\|_{L^2}^2 \\
&\leq C\|D_\tau(u^{n+1}-\mathring R_hu^{n+1})\|_{L^2}^2+
C\|p^{n+1}-p_h^{n+1}\|_{L^2}^2
+C\|E_u^{n+1}\|_{L^2}^2,\\[8pt]
&\|D_\tau e_{v,h}^{n+1}\|_{L^2}^2 
+\|\Delta_h e_{v,h}^{n+1}\|_{L^2}^2 
+D_\tau \|\nabla e_{v,h}^{n+1}\|_{L^2}^2 \\
&\leq C\|D_\tau(v^{n+1}- R_hv^{n+1})\|_{L^2}^2+
 C\|q^{n+1}-q_h^{n+1}\|_{L^2}^2
+C\|E_v^{n+1}\|_{L^2}^2, 
\end{align*}
where $\Delta_h e_{u,h}^{n+1}$ and $\Delta_h e_{v,h}^{n+1}$
are defined in Lemma \ref{LemdisEmb}.
By Lemma \ref{LemdisEmb},
the last two inequalities imply that
\begin{align*}
&C^{-1}\|e_{u,h}^{n+1}\|_{W^{1,4}}^2 
+D_\tau \|\nabla e_{u,h}^{n+1}\|_{L^2}^2 \\
&\leq C\|p^{n+1}-p_h^{n+1}\|_{L^2}^2
+C\|D_\tau(u^{n+1}-\mathring R_hu^{n+1})\|_{L^2}^2
+C\|E_u^{n+1}\|_{L^2}^2,\\[8pt]
&C^{-1}\|e_{v,h}^{n+1}\|_{W^{1,4}}^2 
+D_\tau \|\nabla e_{v,h}^{n+1}\|_{L^2}^2 \\
&\leq C\|q^{n+1}-q_h^{n+1}\|_{L^2}^2
+C\|D_\tau(v^{n+1}- R_hv^{n+1})\|_{L^2}^2
+C\|E_v^{n+1}\|_{L^2}^2 .
\end{align*}
The sum of the last two inequalities gives
\begin{align}\label{ErAhL4}
&C^{-1}\|e_{{\bf A},h}^{n+1}\|_{L^{4}}^2 
+D_\tau \big(\|\nabla e_{u,h}^{n+1}\|_{L^2}^2
+\|\nabla e_{v,h}^{n+1}\|_{L^2}^2 \big) \nn \\
&\leq C\|p^{n+1}-p_h^{n+1}\|_{L^2}^2+C\|q^{n+1}-q_h^{n+1}\|_{L^2}^2
+C\|E_u^{n+1}\|_{L^2}^2+C\|E_v^{n+1}\|_{L^2}^2 \nn\\
&\quad +C\|D_\tau(u^{n+1}-\mathring R_hu^{n+1})\|_{L^2}^2
+C\|D_\tau(v^{n+1}- R_hv^{n+1})\|_{L^2}^2 .
\end{align}

At this moment, we invoke a mathematical induction on
\begin{align}\label{MindAsp}
&\|{\bf A}_h^n\|_{L^4}
\leq \max_{0\leq n\leq N}\|{\bf A}^n\|_{L^4}+1.
\end{align}
Since 
\begin{align*}
\|{\bf A}_h^0-{\bf A}^0\|_{L^4}
&\leq \|\nabla\times (\mathring R_hu^0-u^0)\|_{L^4}
+\|\nabla(R_hv^0-v^0)\|_{L^4}\\
&\leq Ch^{s-1/2}(\|u^0\|_{H^{1+s}}+\|v^0\|_{H^{1+s}}),
\end{align*}
there exists a positive constant $h_1$ such that 
\refe{MindAsp} holds for $n=0$ when $h<h_1$.
In the following, we present estimates of the finite element solution 
by assuming that \refe{MindAsp} holds for $0\leq n\leq m$, for some
nonnegative integer $m$.
We shall see that if \refe{MindAsp} holds for $0\leq n\leq m$,
then it also holds for $n=m+1$. 

Substituting $\xi=e_{p,h}^{n+1}$ in \refe{ErrFEMEqp},
it is not difficult to derive that
\begin{align}
\|\nabla e_{p,h}^{n+1}\|_{L^{2}} 
&\leq C\|E_p^{n+1}\|_{L^2} 
+C\|e_{\psi,h}^{n}\|_{L^4}\|i\kappa^{-1}\nabla\psi^{n+1}+{\bf A}^n\psi^{n+1}\|_{L^4} \nn \\
&\quad +C\|\psi^n-R_h\psi^n\|_{L^4}\|(i\kappa^{-1}\nabla\psi^{n+1}
+ \mathbf{A}^n \psi^{n+1})\|_{L^4}  \nn \\
&\quad +C\big(\|  \nabla e_{\psi,h}^{n+1} \|_{L^2}
+\| {\bf A}_h^n-{\bf A}^n \|_{L^2}
+\|{\bf A}_h^n\|_{L^4}\| e_{\psi,h}^{n+1} \|_{L^4} \nn  \\
&\quad +C\big(\|\nabla(\psi^{n+1}-R_h\psi^{n+1})\|_{L^2}
+\|{\bf A}_h^n\|_{L^4} \|\psi^n-R_h\psi^n\|_{L^4}\big) \nn \\
&\leq C\|E_p^{n+1}\|_{L^2}  +C\|e_{\psi,h}^{n}\|_{H^1}
+C\| \psi^{n+1}-R_h\psi^{n+1} \|_{H^1} 
 +C\| {\bf A}_h^n-{\bf A}^n \|_{L^2} . \label{ErrH1ep}
 \end{align}
Similarly, by substituting $\zeta=e_{q,h}^{n+1}$ in \refe{ErrFEMEqp},
one can derive that
\begin{align}\label{ErrH1eq}
\|\nabla e_{q,h}^{n+1}\|_{L^{2}} 
&\leq C\|E_q^{n+1}\|_{L^2}  +C\|e_{\psi,h}^{n}\|_{H^1}
+C\| \psi^n-R_h\psi^n \|_{H^1} 
 +C\| {\bf A}_h^n-{\bf A}^n \|_{L^2}  .
 \end{align}
Substituting $\varphi=e_{\psi,h}^{n+1}$ in \refe{ErrFEMEqpsi},
we obtain that
\begin{align}\label{errpsihL2}
& D_\tau\bigg(\frac{\eta}{2}  \|e_{\psi,h}^{n+1}\|_{L^2}^2\bigg) + 
\kappa^{-2} \|\nabla  
e_{\psi,h}^{n+1}\|_{L^2}^2  \nn\\
&\leq C\|D_\tau (\psi^{n+1}-R_h\psi^{n+1})\|_{L^2}^2
+C\|e_{\psi,h}^{n+1}\|_{L^2}^2+C\|E_\psi^{n+1}\|_{L^2}^2
  \nn\\
&\quad +C\|{\bf A}^{n}_h\|_{L^4}
\|\nabla e_{\psi,h}^{n+1}\|_{L^2}
\|e_{\psi,h}^{n+1}\|_{L^4} +C\|{\bf A}^{n}_h\|_{L^4}
\|\nabla (\psi_h^{n+1}-R_h\psi^{n+1})\|_{L^2}
\|e_{\psi,h}^{n+1}\|_{L^4} \nn \\
&\quad +C\|  {\bf A}_h^{n}-{\bf A}^{n}\|_{L^2}
\|\nabla \psi^{n+1}\|_{L^4} \|e_{\psi,h}^{n+1}\|_{L^4}
+C\|e_{\psi,h}^{n+1}\|_{L^4} \|{\bf A}_h^{n}\|_{L^4}
\|\nabla e_{\psi,h}^{n+1}\|_{L^2}   \nn\\
&\quad +C\|\psi^{n+1}
-R_h\psi^{n+1}\|_{L^4}\|{\bf A}_h^{n}\|_{L^4}
\|\nabla e_{\psi,h}^{n+1}\|_{L^2}
+C\|  {\bf A}_h^{n}-{\bf A}^{n}\|_{L^2}\|\nabla e_{\psi,h}^{n+1}\|_{L^2}  \nn\\ 
&\quad +C(\|{\bf A}_h^{n}\|_{L^4}+\|{\bf A}^{n}\|_{L^4})
\|  {\bf A}_h^{n}-{\bf A}^{n}\|_{L^2} \|e_{\psi,h}^{n+1}\|_{L^4} \nn\\
&\quad +C\|{\bf A}^n_h\|_{L^4}^2\|e_{\psi,h}^{n+1}\|_{L^4}^2
 +C\|{\bf A}^n_h\|_{L^4}^2\|\psi^{n+1}-R_h\psi^{n+1}\|_{L^4}\|e_{\psi,h}^{n+1}\|_{L^4} \nn\\
&\quad +(C\|\psi_h^{n}\|_{L^4}^2+C) \|e_{\psi,h}^{n+1}\|_{L^4}^2 
+(C\|\psi_h^{n}\|_{L^4} +C) \|e_{\psi,h}^{n}\|_{L^2}\|e_{\psi,h}^{n+1}\|_{L^4}  \nn\\
&\quad +C\|{\bf A}_h^n\|_{L^4}\|\nabla e_{\psi,h}^{n+1}\|_{L^2}
\|e_{\psi,h}^{n+1}\|_{L^4}
+C\|{\bf A}_h^n\|_{L^4}\|\psi^{n+1}-R_h\psi^{n+1}\|_{H^1}
\|e_{\psi,h}^{n+1}\|_{H^1}\nn\\
&\quad +C\|{\bf A}_h^n-{\bf A}^n\|_{L^2}
(\|e_{\psi,h}^{n+1}\|_{L^2}+\|\nabla e_{\psi,h}^{n+1}\|_{L^2})\nn\\
&\leq \epsilon\|\nabla e_{\psi,h}^{n+1}\|_{L^2}^2
+C_\epsilon\|e_{\psi,h}^{n+1}\|_{L^2}^2
+C_\epsilon\| {\bf A}_h^n-{\bf A}^n \|_{L^2}^2  \nn\\
&\quad +C \big(  \|E_\psi^{n+1}\|_{L^2}^2
+  \|D_\tau (\psi^{n+1}-R_h\psi^{n+1})\|_{L^2}^2 
+\|\psi_h^{n+1}-R_h\psi^{n+1}\|_{H^1}^2
 \big) ,
\end{align}
for any small positive number $\epsilon\in(0,1)$.
Substituting \refe{ErrH1ep}-\refe{ErrH1eq} into \refe{ErAhL4},
then \refe{ErAhL4} times $\varepsilon_1$
plus \refe{errpsihL2} gives
\begin{align} 
&\varepsilon_1 C^{-1}\|e_{{\bf A},h}^{n+1}\|_{L^{4}}^2 
+\kappa^{-2} \|\nabla   e_{\psi,h}^{n+1}\|_{L^2}^2
+D_\tau \big(\varepsilon_1 \|\nabla e_{u,h}^{n+1}\|_{L^2}^2
+\varepsilon_1 \|\nabla e_{v,h}^{n+1}\|_{L^2}^2
+\frac{\eta}{2}  \|e_{\psi,h}^{n+1}\|_{L^2}^2 \big)   \nn\\
&\leq C  \|E_p^{n+1}\|_{L^2}^2
+C \|E_q^{n+1}\|_{L^2}^2
+C\|E_u^{n+1}\|_{L^2}^2
+C\|E_v^{n+1}\|_{L^2}^2 
+C\|E_\psi^{n+1}\|_{L^2}^2  \nn  \\
&\quad +C\| \psi^{n+1}-R_h\psi^{n+1} \|_{H^1}^2 
+ C \|D_\tau (\psi^{n+1}-R_h\psi^{n+1})\|_{L^2}^2 
+C\|D_\tau (u^{n+1}-\mathring R_hu^{n+1})\|_{L^2}^2  \nn\\
&\quad 
+C\|D_\tau (v^{n+1}-R_hv^{n+1})\|_{L^2}^2
+(C\varepsilon_1+\epsilon)\|\nabla e_{\psi,h}^{n+1}\|_{L^2}^2 \nn\\
&\quad +(C\varepsilon_1+C_\epsilon)\|e_{\psi,h}^{n+1}\|_{L^2}^2
 + C_\epsilon \| {\bf A}_h^n-{\bf A}^n \|_{L^2}^2  .
 \label{prefnerr}
\end{align}
By choosing $\varepsilon_1$ and $\epsilon$ small enough, 
the term $(C\varepsilon_1+\epsilon)\|\nabla e_{\psi,h}^{n+1}\|_{L^2}^2$
on the right-hand side of the last inequality can be
eliminated by the left-hand side.
Since 
\begin{align*} 
\|{\bf A}_h^{n}-{\bf A}^{n}\|_{L^4}
&\leq 
C\|\nabla e_{u,h}^{n}\|_{L^2}+C\|\nabla e_{v,h}^{n}\|_{L^2}
+C\|\nabla\times  (u^{n}-\mathring R_hu^{n})\|_{L^4} 
+C\|\nabla (v^{n}-R_hv^{n})\|_{L^4}\\
&\leq C\|\nabla e_{u,h}^{n}\|_{L^2}+C\|\nabla e_{v,h}^{n}\|_{L^2} 
+ C(\|u^{n+1} \|_{H^{1+s}} +\|v^{n+1} \|_{H^{1+s}})h^s ,
\end{align*}
the inequality \refe{prefnerr} reduces to
\begin{align*} 
& \frac{\varepsilon_1}{ C }\|e_{{\bf A},h}^{n+1}\|_{L^{4}}^2 
+\frac{1}{2\kappa^2}\|\nabla   e_{\psi,h}^{n+1}\|_{L^2}^2
+D_\tau \big( \varepsilon_1 \|\nabla e_{u,h}^{n+1}\|_{L^2}^2
+\varepsilon_1  \|\nabla e_{v,h}^{n+1}\|_{L^2}^2
+\frac{\eta}{2}  \|e_{\psi,h}^{n+1}\|_{L^2}^2 \big)   \\
%&\leq  C\|\nabla e_{u,h}^{n}\|_{L^2}^2
% +C\|\nabla e_{v,h}^{n}\|_{L^2}^2
%+C \|e_{\psi,h}^{n+1}\|_{L^2}^2  \\
%&\quad 
%+C \|E_p^{n+1}\|_{L^2}^2+C \|E_q^{n+1}\|_{L^2}^2
%+C\|E_u^{n+1}\|_{L^2}^2
%+C\|E_v^{n+1}\|_{L^2}^2 
%+C\|E_\psi^{n+1}\|_{L^2}^2  \\
%&\quad 
%+C\| \psi^{n+1}-R_h\psi^{n+1} \|_{H^1}^2
%+C\| u^{n+1}-\mathring R_hu^{n+1} \|_{H^1}^2
%+C\| v^{n+1}-R_hv^{n+1} \|_{H^1}^2 \\
%&\quad + C \|D_\tau (\psi^{n+1}-R_h\psi^{n+1})\|_{L^2}^2 
%+C\|D_\tau (u^{n+1}-\mathring R_hu^{n+1})\|_{L^2}^2
%+C\|D_\tau (v^{n+1}- R_hv^{n+1})\|_{L^2}^2\\
&\leq C\|\nabla e_{u,h}^{n}\|_{L^2}^2+C\|\nabla e_{v,h}^{n}\|_{L^2}^2 
+C \|e_{\psi,h}^{n+1}\|_{L^2}^2\\
&\quad +C \|E_p^{n+1}\|_{L^2}^2+C \|E_q^{n+1}\|_{L^2}^2
+C\|E_u^{n+1}\|_{L^2}^2
+C\|E_v^{n+1}\|_{L^2}^2 
+C\|E_\psi^{n+1}\|_{L^2}^2  \\
&\quad + C (\|\psi^{n+1} \|_{H^{1+s}}^2
+\|u^{n+1} \|_{H^{1+s}}^2 
+\|v^{n+1} \|_{H^{1+s}}^2)h^{2s} \\
&\quad + C (\|D_\tau  \psi^{n+1} \|_{H^{1+s}}^2
+\|D_\tau u^{n+1} \|_{H^{1+s}}^2 
+\|D_\tau v^{n+1} \|_{H^{1+s}}^2)h^{4s}  .
\end{align*}
By applying Gronwall's inequality,
there exists a positive constant $\tau_1$ 
such that when $\tau<\tau_1$
we have 
\begin{align} \label{FNErrEst}
& \max_{0\leq n\leq m}\big( \|\nabla e_{u,h}^{n+1}\|_{L^2}^2
+ \|\nabla e_{v,h}^{n+1}\|_{L^2}^2
+ \|e_{\psi,h}^{n+1}\|_{L^2}^2 \big)  
+\sum_{n=0}^m\tau \|e_{{\bf A},h}^{n+1}\|_{L^{4}}^2   
\leq C_1(\tau^2+h^{2s})   
\end{align}
for some positive constant $C_1$.
In particular, the last inequality implies that
\begin{align*} 
 \max_{0\leq n\leq m} \|e_{{\bf A},h}^{n+1}\|_{L^2}^2 
 +\sum_{n=0}^m\tau \|e_{{\bf A},h}^{n+1}\|_{L^{4}}^2  
 \leq C(\tau^2+h^{2s})  .
\end{align*}
If $\tau\geq h $, then we have
\begin{align*} 
\|e_{{\bf A},h}^{m+1}\|_{L^{4}}^2  
\leq \frac{1}{\tau}\sum_{n=0}^m\tau \|e_{{\bf A},h}^{n+1}\|_{L^{4}}^2 
\leq C(\tau +h^{2s }/\tau) \leq C(\tau +h^{2s-1} ) \, ;
\end{align*}
if $\tau\leq h$, then we have
\begin{align*} 
\|e_{{\bf A},h}^{m+1}\|_{L^4}^2 
\leq Ch^{-1} \|e_{{\bf A},h}^{m+1}\|_{L^2}^2
\leq C(\tau^2 /h+h^{2s-1} ) \leq C(h +h^{2s-1} ) \, .
\end{align*}
Overall, we have $\|e_{{\bf A},h}^{m+1}\|_{L^4}^2 \leq 
C(\tau+h+h^{2s-1})$ and so
\begin{align*} 
\|{\bf A}_h^{m+1}-{\bf A}^{m+1}\|_{L^4}
&\leq\|e_{{\bf A},h}^{m+1}\|_{L^4}+
\|\nabla\times  (u^{m+1}-\mathring R_hu^{m+1})\|_{L^4} 
+\|\nabla (v^{m+1}-R_hv^{m+1})\|_{L^4}\\
%&\leq \|e_{{\bf A},h}^{m+1}\|_{L^4}+Ch^{s-1/2}\\
&\leq C(\tau^{1/2}+h^{1/2} +h^{s-1/2} )  .
\end{align*}
There exist positive constants $\tau_2$ and $h_2$ such
that when $\tau<\tau_2$ and $h<h_2$ we have
\begin{align*} 
\|{\bf A}_h^{m+1}-{\bf A}^{m+1}\|_{L^4}\leq 1 \, ,
\end{align*}
and this completes the mathematical induction on \refe{MindAsp}
in the case that $\tau<\tau_2$ and $h<h_2$.
Thus \refe{FNErrEst} holds for $m= N-1$ with the same constant
$C_1$, provided $\tau<\tau_2$ and $h<h_2$.

If $\tau\geq \tau_2$ or $h\geq h_2$, 
from \refe{APEPsi}-\refe{APEpq} we see that
\begin{align} \label{FNErrEst2}
& \max_{0\leq n\leq N-1}\big( \|\nabla e_{u,h}^{n+1}\|_{L^2}^2
+ \|\nabla e_{v,h}^{n+1}\|_{L^2}^2
+ \|e_{\psi,h}^{n+1}\|_{L^2}^2 \big)   
\leq C_2 \leq C_2\big(\tau_2^{-2}+h_2^{-2s}\big)(\tau^2+h^{2s})
\end{align}
for some positive constant $C_2$.
From \refe{FNErrEst} and \refe{FNErrEst2} we see that 
for any $\tau$ and $h$ we have
\begin{align*}  
&  \max_{0\leq n\leq N-1}\big( \|\nabla e_{u,h}^{n+1}\|_{L^2}^2
+ \|\nabla e_{v,h}^{n+1}\|_{L^2}^2
+ \|e_{\psi,h}^{n+1}\|_{L^2}^2 \big)  
\leq \big[C_1+C_2\big(\tau_2^{-2}+h_2^{-2s}\big)\big](\tau^2+h^{2s}) .
\end{align*}

The proof of Theorem \ref{MainTHM3} is completed. 
\bigskip

\section{Numerical example}\label{NumerEx}
\setcounter{equation}{0}

%In this section, we present numerical examples 
%to support our theoretical analysis.
%In particular, we compare the numerical solution 
%of the TDGL 
%with the numerical solution 
%of the projected TDGL.

%\begin{example} 
We consider an artificial example, the equations 
\begin{align}
&\eta\frac{\partial \psi}{\partial t} 
+ \bigg(\frac{i}{\kappa} \nabla 
+ \mathbf{A}\bigg)^{2} \psi
 + (|\psi|^{2}-1) \psi -i\eta \kappa \psi \nabla\cdot{\bf A} = g ,
\label{NTPDE1}\\[5pt]
&\frac{\partial \mathbf{A}}{\partial t} 
+ \nabla\times(\nabla\times{\bf A})
-\nabla(\nabla\cdot{\bf A}) 
+  {\rm Re}\bigg[\psi^*\bigg(\frac{i}{\kappa} \nabla 
+ \mathbf{A}\bigg) \psi\bigg] 
=  {\bf g}+\nabla\times f ,
\label{NTPDE2}
\end{align}
in an L-shape domain $\Omega$ 
whose longest side has unit length, centered at the origin,
with $\eta=1$ and $k=10$.
The functions 
$f=\nabla\times{\bf A}\in C^1([0,T];{\bf H}^2)$, 
$g\in C([0,T];L^2)$ and 
${\bf g}\in C([0,T];{\bf L}^2)$ are chosen
corresponding to the exact solution
\begin{align*}
&\psi=t^2\Phi(r)r^{2/3}\cos(2\theta/3),\\
&{\bf A}=\Big (\big( 4t^2 \Phi(r)r^{-1/3}/3
+t^2\Phi'(r)r^{2/3}\big)\cos(\theta/3),~
\big( 4t^2 \Phi(r)r^{-1/3}/3
+t^2\Phi'(r)r^{2/3}\big)\sin(\theta/3)\Big ),
\end{align*}
where $(r,\theta)$ denotes the polar coordinates, 
the cut-off function $\Phi(r)$ is defined by
$$
\Phi(r)=\left\{
\begin{array}{ll}
0.1 & \mbox{if}~~r<0.1,  \\
\Upsilon(r) &\mbox{if}~~ 0.1\leq r\leq 0.4 ,\\
0 & \mbox{if}~~r>0.4, 
\end{array}\right.
$$
and $\Upsilon(r)$ is the unique $7^{\rm th}$ 
order polynomial satisfying the 
conditions $\Upsilon'(0.1)=\Upsilon''(0.1)
=\Upsilon'''(0.1)=\Upsilon(0.4)=\Upsilon'(0.4)
=\Upsilon''(0.4)=\Upsilon'''(0.4)=0$ 
and $\Upsilon(0.1)=0.1$.
It is easy to check that the exact solution $(\psi,{\bf A})$
satisfies the boundary and initial conditions
\refe{bc}-\refe{init} with $\psi_0=0$ and ${\bf A}_0=(0,0)$.

The L-shape domain is triangulated quasi-uniformly, 
as shown in Figure \ref{Lshape}, 
with $M$ nodes per unit length on each side, 
and we denote by $h=1/M$ for simplicity.
\begin{figure}[ht]
\vspace{0.1in}
\centering
\begin{tabular}{ccc}
\epsfig{file=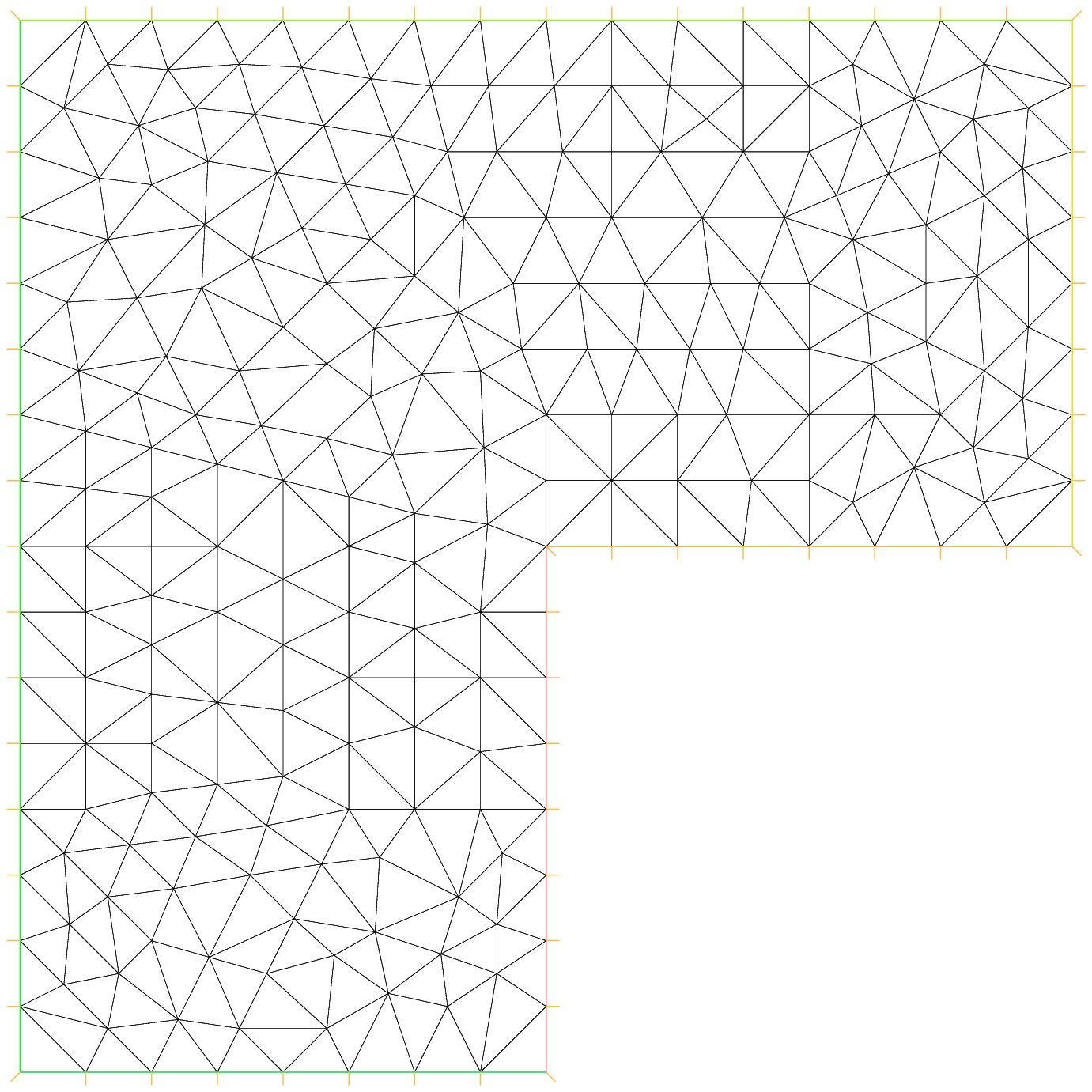,height=1.3in,width=1.9in}
\hspace{-0.2in}
\epsfig{file=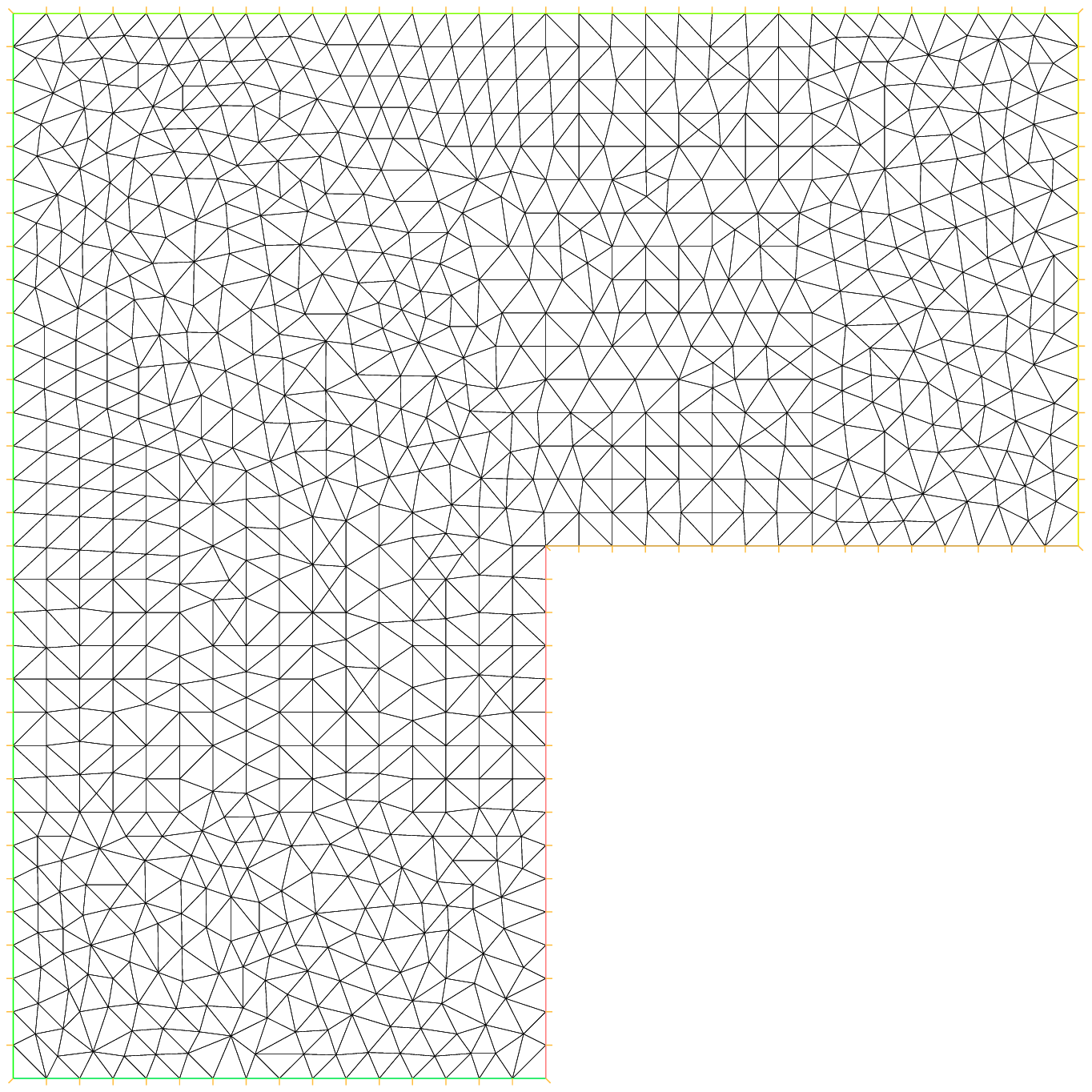,height=1.3in,width=1.9in}
\hspace{-0.2in}
\epsfig{file=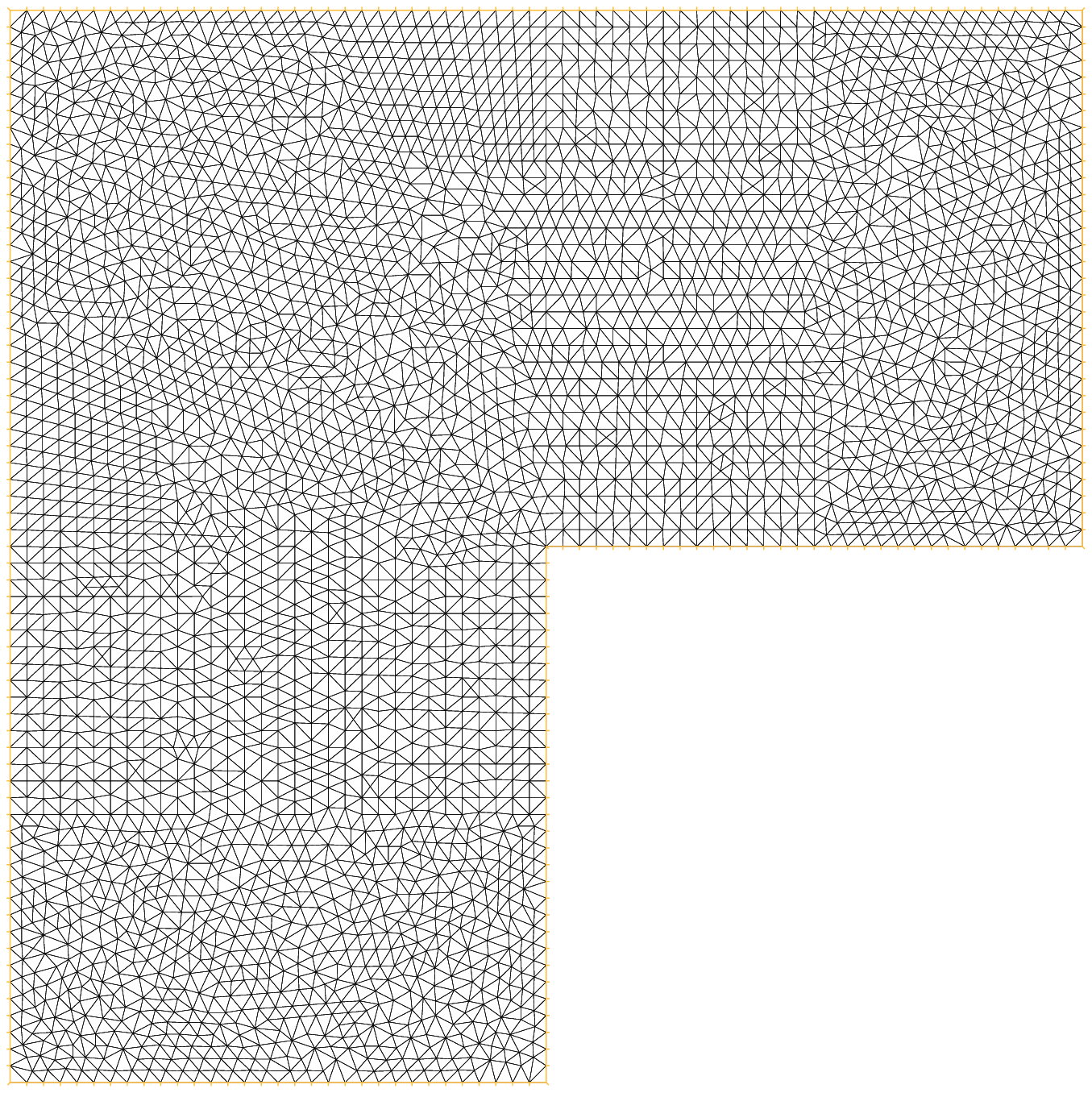,height=1.3in,width=1.9in}
\end{tabular}
%\vskip-0.1in
\caption{\small Quasi-uniform triangulations with $M=16,32,64$.}
\label{Lshape}
\end{figure}

Firstly, we solve \refe{NTPDE1}-\refe{NTPDE2} directly 
by the FEM with piecewise linear finite elements 
and a linearized backward Euler scheme, 
%(see \cite{CH95,DuFEM94, GLS,Mu97,MH98} concerning 
%FEMs for the TDGL),
and we denote the numerical solution by 
$(\widetilde\psi_h^N,\widetilde{\bf A}_h^N)$. 
%Specifically, 
%we look for $\widetilde\psi^{n+1}_h\in {\mathcal  V}_{h}^{1}$ 
%and $\widetilde {\bf A}^{n+1}_h\in
%{\bf V}_h^1:=\{{\bf a}\in {\bf H}^1_{\rm n}(\Omega): {\bf a}|_K~ 
%\mbox{is linear for each triangle $K$, and}
%~{\bf a}\cdot{\bf n}=0~\mbox{on}~\partial\Omega\}$, $n=0,1,\cdots,N-1$, satisfying
%\begin{align}
%&\big(D_\tau \widetilde \psi_h^{n+1}, \varphi\big) +\big
%((i\kappa^{-1}\nabla +\widetilde {\bf A}^{n}_h)
%\widetilde \psi^{n+1}_h,(i\kappa^{-1}\nabla +\widetilde {\bf
%A}^{n}_h)  \varphi\big) +\big
%((|\widetilde \psi^{n}_h|^{2}-1) \widetilde \psi^{n+1}_h,
%\varphi\big) =\big(g,\varphi\big), 
%\label{C0FEM1}\\[5pt]
%&\big(D_\tau\widetilde {\bf A}_h^{n+1},{\bf a}\big)
%+\big(\nabla\times\widetilde {\bf A}_h^{n+1},\nabla\times{\bf a}\big)
%+\big(\nabla\cdot\widetilde {\bf A}_h^{n+1},\nabla\cdot{\bf a}\big) \nn\\
%&\qquad\qquad\qquad\quad~~ + \big( {\rm Re}\big[(\widetilde \psi_h^{n})^*\big(i\kappa^{-1}\nabla 
%+\widetilde{\bf A}_h^{n}\big) \widetilde \psi_h^{n+1}\big],{\bf a}\big) 
%= \big({\bf g},{\bf a}\big)+ \big( f ,\nabla\times{\bf a}\big),
%\label{C0FEM2}
%\end{align}
%for any $\varphi\in{\mathcal  V}_h^1$ and 
%${\bf a}\in {\bf V}_h^1$, with the initial condition $\widetilde\psi_h^0=0$
%and $\widetilde{\bf A}_h^0=(0,0)$. 
In a convex or smooth domain, convergence of 
the numerical solution 
$(\widetilde\psi_h^N,\widetilde{\bf A}_h^N)$ 
can be proved based on the method of \cite{CH95,DuFEM94}. 
Here we are interested in the question: whether 
the numerical solution converges to the 
correct solution in a nonconvex polygonal domain?
To answer this question, 
we present the errors of the numerical solution 
in Table \ref{Tab1} with $\tau=h$ for several different $h$.
One can see that the errors do not decrease as the mesh is refined.
In other words, the numerical solution 
$(\widetilde\psi_h^N,\widetilde{\bf A}_h^N)$ does not converge 
to the correct solution, nor does the physical quantity 
$|\widetilde\psi_h^N|$ converge to $|\psi^N|$. 
%This indicates that the direct Galerkin FEMs 
%with $C^0$ finite elements are, indeed, not suitable to solve
%the Ginzburg--Landau equations in a domain with reentrant corners.

Secondly, we solve the projected TDGL
corresponding to  \refe{NTPDE1}-\refe{NTPDE2}
by the proposed method
%by incorporating $g$ and ${\bf g}$ into the equations.
%Specifically, we look for $\psi^{n+1}_h\in {\mathcal  V}_{h}^{1}$, 
%$p^{n+1}_h,u^{n+1}_h\in
%\mathring V_h^{1}$ and
%$q^{n+1}_h,v^{n+1}_h\in
%V_h^{1}$ satisfying\begin{align*}
%&\big(D_\tau \psi^{n+1}_h, \varphi\big) +\big
%((i\kappa^{-1}\nabla + \mathbf{A}^{n}_h)
%\psi^{n+1}_h,(i\kappa^{-1}\nabla + \mathbf{
%A}^{n}_h)  \varphi\big) +\big
%((|\psi^{n}_h|^{2}-1) \psi^{n+1}_h,
%\varphi\big) = \big(g^{n+1},\varphi\big),&
%\\
%&(\nabla p^{n+1}_h,\nabla \xi)=\big({\rm Re}
%[\chi(\psi^n_h)^*(i\kappa^{-1}\nabla\psi^{n+1}_h 
%+ \mathbf{A}^n_h \psi^{n+1}_h)]-{\bf g}^{n+1},
%\nabla\times \xi\big)
%\\
%&(\nabla q^{n+1}_h,\nabla \zeta)
%=\big({\rm Re}
%[\chi(\psi^n_h)^*(i\kappa^{-1}\nabla\psi^{n+1}_h 
%+ \mathbf{A}^n_h \psi^{n+1}_h)]-{\bf g}^{n+1},\nabla \zeta\big)
%\\
%&\big(D_\tau u^{n+1}_h, \theta\big) 
%+\big(\nabla u^{n+1}_h,\nabla\theta\big)=(f^{n+1}-p^{n+1}_h,\theta)\\
%&\big(D_\tau v^{n+1}_h, \vartheta\big) 
%+\big(\nabla v^{n+1}_h,\nabla\vartheta\big)
%=(-q^{n+1}_h,\vartheta), 
%\end{align*}
%for all $\varphi\in{\mathcal  V}^1_h$, $\zeta,\vartheta\in V^1_h$ 
%and $\xi,\theta\in \mathring V^1_h$,
%with ${\bf A}_h^n=\nabla\times u_h^n+\nabla v_h^n$
%and $\psi_h^0=u^0_h=v^0_h=0$. 
and denote the numerical solution by 
$(\psi_h^N,{\bf A}_h^N)$. We present the errors of the 
numerical solution in Table \ref{Tab2},
where the convergence rate of $\psi_h^N$  
is calculated by the formula 
\begin{align*}
&{\rm convergence~ rate~ of~}\psi_h^N=
\log(\|\psi_h^N -\psi^N \|_{L^2}/\|\psi_{h/2}^N -\psi^N \|_{L^2})/\log 2 
%\\
%&{\rm convergence~ rate~ of~}|\psi_h^N|=
%\log(\||\psi_h^N| -|\psi^N| \|_{L^2}/
%\||\psi_{h/2}^N| -|\psi^N| \|_{L^2})/\log(2),\\
%&{\rm convergence~ rate~ of~}{\bf A}_h^N=
%\log(\|{\bf A}_h^N -{\bf A}^N \|_{L^2}/
%\|{\bf A}_{h/2}^N -{\bf A}^N \|_{L^2})/\log(2),\\
%&{\rm convergence~ rate~ of~}B_h^N=
%\log(\|B_h^N -B^N \|_{L^2}/\|B_{h/2}^N- B^N \|_{L^2})/\log(2),
\end{align*}
based on the finest mesh size $h$ 
(the same formula is used for $|\psi_h^N|$ and ${\bf A}_{h}^N$).
We see that the convergence rates of
$\psi_h^N$, $|\psi_h^N|$ and ${\bf A}_h^N$ are better than $O(h^{2/3})$, 
which is the worst convergence rate proved in Theorem \ref{MainTHM3}.
The numerical results are consistent with our theoretical analysis 
and indicate that our method is efficient for solving
the Ginzburg--Landau equations 
in a domain with reentrant corners.
%\end{example}

\begin{table}[htp]
\begin{center}
\caption{\small Errors of the finite 
element solution $(\widetilde\psi_h^N,\widetilde{\bf A}_h^N)$
with $\tau=h$.}
\label{Tab1}
\begin{tabular}{c|c|c|ccc}
\hline
 $h$ &  $\| \widetilde\psi_h^N -\psi^N \|_{L^2}$ 
 &  $\| |\widetilde\psi_h^N| -|\psi^N| \|_{L^2}$ &
$\|  \widetilde{\bf A}_h^N - {\bf A}^N \|_{L^2}$\\ 
%& $\|  B_h^N - B^N \|_{L^2}$        \\
\hline
  1/16
&4.2113E-03  &3.7007E-03  &8.3961E-02\\% &4.3434E-01  \\
  1/32
&3.1847E-03  &2.0651E-03 &8.1396E-02\\% &2.5710E-01\\
 1/64
&2.9884E-03  &1.6286E-03 &7.9709E-02\\% &1.7884E-01 \\
1/128
&2.9170E-03  &1.4624E-03 &7.8779E-02\\%  &1.5062E-01 \\
1/256
&2.8734E-03  &1.3875E-03 &7.8210E-02\\%  &1.4299E-01 \\
\hline convergence rate
&   $O(h^{0.02})$  &  $O(h^{0.07})$   &  $O(h^{0.01})$\\
%&   $O(h^{0.25})$  \\
\hline 
\end{tabular}
\end{center}
\bigskip\bigskip

\begin{center}
\caption{\small Errors of the finite 
element solution $(\psi_h^N,{\bf A}_h^N)$ with $\tau=h$.}
\label{Tab2}
\begin{tabular}{c|c|c|ccc}
\hline
 $h$ &  $\| \psi_h^N -\psi^N \|_{L^2}$ 
 &  $\| |\psi_h^N| -|\psi^N| \|_{L^2}$  &
$\|  {\bf A}_h^N - {\bf A}^N \|_{L^2}$ \\
%&$\|  B_h^N - B^N \|_{L^2}$     \\
\hline
  1/16
&2.7608E-03  &2.4889E-03 &2.9448E-02\\% & 3.2125E-03 \\
  1/32
&8.0517E-04  &7.0163E-04 &1.4861E-02\\% &9.2255E-04\\
 1/64
&3.1147E-04  &2.8685E-04 &8.0870E-03\\% & 1.9654E-04\\
1/128
&1.3066E-04  &1.2664E-04 &4.3397E-03\\% & 3.9270E-05\\
1/256
&6.1047E-05  &6.0252E-05 &2.3748E-03\\% & 7.3511E-06\\
\hline
convergence rate
& 
$O(h^{1.09})$   
& 
$O(h^{1.07})$ 
&  $O(h^{0.87})$  \\
%&  $O(h^{2.30})$  \\
\hline 
\end{tabular}

\end{center}
\end{table}

\section{Conclusions}\label{Concls}
\setcounter{equation}{0}
We have proved the well-posedness of the time-dependent Ginzburg--Landau
superconductivity model in a nonconvex polygonal domain.
Due to the singularity of the magnetic potential, direct application of the finite element method
to the original Ginzburg--Landau equations may yield an incorrect solution.
Based on the Hodge decomposition,
we reformulated the equations into an equivalent system,
 which avoids direct calculation of the magnetic potential, and therefore
can be solved correctly by finite element methods. Then a
decoupled and linearized FEM was proposed 
and convergence rate of the numerical solution was established
based on proved regularity of the essential unknowns of the reformulated system.
Numerical examples show the effectiveness of the proposed method
in comparison with the traditional approach.
For simplicity, we have focused on nonconvex polygons in this paper.
Nevertheless, the results 
can be extended to nonconvex curved polygons
without essential change of the argument.

\bigskip

\noindent{\bf Acknowledgement.}$\quad$
We would like to thank Professor Qiang Du
for helpful discussions.\bigskip

\end{document}